

\documentclass[11pt]{amsart}
\usepackage{amsfonts}
\usepackage{amsmath}
\usepackage{amssymb}
\usepackage{graphicx}
\usepackage{amscd}
\usepackage{thmdefs}
\usepackage[colorlinks,urlcolor=blue,citecolor=blue,filecolor=blue,linkcolor=blue]{hyperref}

\setcounter{MaxMatrixCols}{10}


\input{tcilatex}
\numberwithin{equation}{section}

\input tcilatex

\newtheorem{st}{Statement}[section]

\newtheorem{propositionA}[st]{Proposition}
\newtheorem{exampleA}[st]{Example}
\newtheorem{corollaryA}[st]{Corollary}

\newtheorem{theoremA}[st]{Theorem}
\newtheorem{lemmaA}[st]{Lemma}

\begin{document}
\title[Continuous extensions]{Continuous extension of maps between
sequential cascades}
\author{Szymon Dolecki}
\date{%
\today
}
\address{Institut de Math\'{e}matiques de Bourgogne, B. P. 47870, 21078
Dijon, France.}
\email{dolecki@u-bourgogne.fr}
\author{Andrzej Starosolski$^{0}$}
\address{Instytut Matematyki, Politechnika \'{S}l\c{a}ska, ul. Kaszubska 23\\
44-100 Gliwice, Poland.}
\email{Andrzej.Starosolski@polsl.pl}
\keywords{sequential cascade, contour, extension of maps \newline \hspace*{3mm} 2010
Mathematics Subject Classification 54C20, 03E05, 54D55}
\maketitle

\begin{abstract}
The contour of a family of filters along a filter is a set-theoretic lower
limit. Topologicity and regularity of convergences can be characterized with
the aid of the contour operation. Contour inversion is studied, in
particular, for iterated contours of sequential cascades. A related problem
of continuous extension of maps between maximal elements of sequential
cascades to full subcascades is solved in full generality.
\end{abstract}

\section{Introduction\label{intr}}

\footnotetext{%
The corresponding author.}

The \emph{contour} $\limfunc{Li}\nolimits_{\left( n\right) }\mathcal{F}_{n}$
of a sequence of filters $\left( \mathcal{F}_{n}\right) _{n}$ on $X$,
defined by%
\begin{equation*}
\limfunc{Li}\nolimits_{\left( n\right) }\mathcal{F}_{n}:=\bigcup%
\nolimits_{k<\omega }\bigcap\nolimits_{n>k}\mathcal{F}_{n},
\end{equation*}%
is a special case of the \emph{contour operation}. This operation is
important because it can be used, in complementary ways, to define both
diagonal and regular convergences, hence to characterize topologicity and
regularity. It was also used by Frol\'{\i}k in \cite{Frolik} to show in ZFC
the non-homogeneity of the compactification of a countably infinite set (See
Section \ref{app} for more information).

In this paper, we investigate how much information on $\left( \mathcal{F}%
_{n}\right) _{n}$ can be recovered from the knowledge of $\limfunc{Li}%
\nolimits_{\left( n\right) }\mathcal{F}_{n},$ that is, a contour inversion
problem. In some situations, the contour determines the original sequence
almost entirely. For example, it follows from \cite[Theorem 3.1]{DSW} that
if $\left( \mathcal{F}_{n}\right) _{n}$ and $\left( \mathcal{G}_{n}\right)
_{n}$ are sequences of filters and $\left( X_{n}\right) _{n}$ is a sequence
of disjoint sets such that%
\begin{equation}
\underset{n<\omega }{\forall }\;X_{n}\in \mathcal{F}_{n}\cap \mathcal{G}_{n},
\label{t}
\end{equation}%
then $\limfunc{Li}\nolimits_{\left( n\right) }\mathcal{F}_{n}=\limfunc{Li}%
\nolimits_{\left( n\right) }\mathcal{G}_{n}$ implies $\mathcal{F}_{n}=%
\mathcal{G}_{n}$ for almost all $n.$ This is, a particularly simple case,
but it has a generalization without the assumption (\ref{t}) (Theorem \ref%
{cor:trans}). In general, however, the relation $\limfunc{Li}%
\nolimits_{\left( k\right) }\mathcal{G}_{k}\geq \limfunc{Li}%
\nolimits_{\left( n\right) }\mathcal{F}_{n}$ does not even imply that $%
\mathcal{G}_{k}$ and $\mathcal{F}_{n}$ \emph{mesh} (for the definition, see
Section \ref{sec:grill} below) for some $\left( n,k\right) $.

The \emph{Alternative} (Theorem \ref{thm:alt}) is a key to the contour
inversion problem. We apply the Alternative to \emph{sequential contours},
that is, to iterated contours of \emph{sequential cascades,} to be defined
in Section \ref{sec:casc}. The contour of a sequential cascade is, in fact,
the restriction to its maximal elements of the topological modification of
the original convergence of the cascade. Therefore, in this case, contour
inversion is akin to \emph{continuous extension} of maps between sequential
cascades. We show that if $V,W$ are monotone sequential cascades, then for
each continuous map $\varphi :\limfunc{ext}V\rightarrow \limfunc{ext}W$
(with respect to the standard topologies of sequential cascades), there
exists a full, closed downwards subcascade $T$ of $V$ and a continuous map $%
f:T\rightarrow W$ such that $\left. f\right\vert _{\limfunc{ext}T}=\left.
\varphi \right\vert _{\limfunc{ext}T}$. This fact was formulated in \cite[%
Theorem 3.3]{DSW} in a rather special case of injective maps $\varphi $ and
repeated in \cite{multisequences} with the same proof that turns out not to
be correct.

A topology is called \emph{subsequential} if it is a subspace of a
sequential topology. In \cite{DW.subseq}\cite{multisequences}, subsequential
spaces were characterized by the following equivalent statements:

\begin{enumerate}
\item A topological space $X$ is subsequential,

\item $x\in \limfunc{cl}\nolimits_{X}A$ if and only if there exists a
sequential contour $\mathcal{F}$ on $A$ such that $x\in \lim\nolimits_{X}%
\mathcal{F}$,

\item $A$ is $X$-closed if and only if $\lim\nolimits_{X}\mathcal{F}\subset
A $ for each sequential contour $\mathcal{F}$ on $A.$
\end{enumerate}

This shows the importance of sequential contours in the theory of
subsequential spaces (\cite{DSW},\cite{FrRaj}). In particular, in \cite{DSW}%
, a generating role of \emph{supercontours,} that is, of the suprema $%
\bigvee_{\alpha <\omega _{1}}\mathcal{F}_{a}$ where $\mathcal{F}_{a}$ is a
sequential contour of rank $\alpha ,$ was studied. It was shown that if a
filter $\mathcal{H}$ generates every subsequential topology, then $\mathcal{H%
}$ is not a sequential contour. On the other hand, under $\emph{CH}$, there
exists a supercontour $\mathcal{K}$ such that the class generated by $%
\mathcal{K}$ strictly includes the class of subsequential topologies.

The continuous extension applies as well in set-theory and logic for some
classifications of ultrafilters on natural numbers, for example, to
establish a hierarchy of $\omega _{1}$-sequence of disjoint classes of
ultrafilters on natural numbers with respect to the \emph{level of
complication}. Specifically, an ultrafilter $u\in P_{\alpha }$ if for each $%
\beta <\alpha $ there exists a monotone sequential contour $\mathcal{C}$ of
rank $\beta $, such that $\mathcal{C}\subset u$ and there is no monotone
sequential contour of rank $\alpha $ contained in $u$. It appears that the
class $P_{2}$ is precisely that of $P$-points (\cite{StaroBeta},\cite{StaroP}%
,\cite{MachStar}). The continuous extension (Theorem \ref{thm:ext}) implies
that if $u\in P_{\alpha }$ then there is $\gamma \leq \alpha $ such that $%
f\left( u\right) \in P_{\gamma }$. Monotone sequential cascades are used in
\cite{StarosolskiCOU} to solve Baumgartner's problem \cite{baum}, in the
first nontrivial case, showing (ZFC) that the class of proper $J_{\omega
^{\omega }}$-ultrafilters is empty. It is equally proved in \cite%
{StarosolskiCOU} that if there is an arbitrarily long finite $<_{\infty }$%
-sequence (in Laflamme's terminology \cite{laflamme}) under $u$, then $u$ is
at least a strict $J_{\omega ^{\omega +1}}$-ultrafilter.

\section{Grill\label{sec:grill}}

Recall that families of sets $\mathcal{A}_{1},\mathcal{A}_{2},\ldots ,%
\mathcal{A}_{n}$ \emph{mesh}, in symbols,%
\begin{equation*}
\mathcal{A}_{1}\#\mathcal{A}_{2}\ldots \#\mathcal{A}_{n}
\end{equation*}%
if $A_{1}\cap A_{2}\cap \ldots \cap A_{n}\neq \varnothing $ for each $%
A_{1}\in \mathcal{A}_{1},A_{2}\in \mathcal{A}_{2},\ldots $,$A_{n}\in
\mathcal{A}_{n}$ \cite{CFT}. We say that $\mathcal{A},\mathcal{B}$ are \emph{%
disjoint} (\footnote{%
Or, \emph{dissociated} \cite{CFT}.}) if $\mathcal{A}\lnot \#\mathcal{B}$. We
abridge $\left\{ A\right\} \#\mathcal{B}$ to $A\#\mathcal{B}$ and even $%
\left\{ A\right\} \#\left\{ B\right\} $ to $A\#B.$ The \emph{grill }$%
\mathcal{A}^{\#}$ of a family $\mathcal{A}$ of subsets of $X$ is defined by%
\begin{equation*}
\mathcal{A}^{\#}:=\left\{ H\subset X:\underset{A\in \mathcal{A}}{\forall }%
\;H\#A\right\} .
\end{equation*}

A sequence $\left( \mathcal{F}_{n}\right) _{n}$ of filters is called \emph{%
disjoint} if $\mathcal{F}_{n}\lnot \#\mathcal{F}_{m}$ for all $n\neq m;$
\emph{totally disjoint} if there exists a sequence $\left( X_{n}\right) _{n}$
of disjoint sets such that $X_{n}\in \mathcal{F}_{n}$ for each $n<\omega .$

\begin{lemmaA}
A sequence of filters $\left( \mathcal{F}_{n}\right) _{n}$ is totally
disjoint if and only if $\left( \mathcal{F}_{n}\right) _{n}$ is disjoint and
$\limfunc{Li}\nolimits_{\left( k\right) }\mathcal{F}_{k}$ is disjoint from $%
\mathcal{F}_{n}$ for each $n<\omega $.
\end{lemmaA}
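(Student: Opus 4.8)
The plan is to prove the two implications separately; the forward implication is essentially immediate, while the converse requires a bookkeeping argument that assembles the disjoint witnesses by hand.

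For the forward implication, suppose $\left( X_{n}\right) _{n}$ is a sequence of pairwise disjoint sets with $X_{n}\in \mathcal{F}_{n}$. Disjointness of the sequence is then trivial, since $X_{n}\in \mathcal{F}_{n}$ and $X_{m}\in \mathcal{F}_{m}$ witness $\mathcal{F}_{n}\lnot \#\mathcal{F}_{m}$ whenever $n\neq m$. For the contour condition, fix $n$ and set $L:=\bigcup\nolimits_{k>n}X_{k}$. Because $X_{k}\subseteq L$ and $X_{k}\in \mathcal{F}_{k}$, we have $L\in \mathcal{F}_{k}$ for every $k>n$, hence $L\in \bigcap\nolimits_{k>n}\mathcal{F}_{k}\subseteq \limfunc{Li}\nolimits_{\left( k\right) }\mathcal{F}_{k}$. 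Since the $X_{k}$ are pairwise disjoint, $L\cap X_{n}=\varnothing $, so $L$ and $X_{n}$ witness $\limfunc{Li}\nolimits_{\left( k\right) }\mathcal{F}_{k}\lnot \#\mathcal{F}_{n}$.

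For the converse, the idea is to manufacture the disjoint witnesses $X_{n}$ as suitable \emph{finite} intersections of members of $\mathcal{F}_{n}$. First, for each $n$ the hypothesis $\limfunc{Li}\nolimits_{\left( k\right) }\mathcal{F}_{k}\lnot \#\mathcal{F}_{n}$ yields $A_{n}\in \mathcal{F}_{n}$ and $L_{n}\in \limfunc{Li}\nolimits_{\left( k\right) }\mathcal{F}_{k}$ with $A_{n}\cap L_{n}=\varnothing $; unwinding the definition of the contour and enlarging if necessary, I fix a threshold $m_{n}\geq n$ such that $L_{n}\in \mathcal{F}_{k}$ for all $k>m_{n}$. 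I then split the pairs $i<j$ into two types: a pair is \emph{good} if $j>m_{i}$ and \emph{bad} if $i<j\leq m_{i}$. Good pairs will be separated by the contour sets $L_{i}$; the crucial finiteness observation is that each index $n$ belongs to only finitely many bad pairs (its larger partners lie in $\left( n,m_{n}\right] $ and its smaller partners lie below $n$), and that $\{k:m_{k}<n\}$ is finite because $m_{k}\geq k$. For each bad pair $\{i,j\}$ the disjointness hypothesis $\mathcal{F}_{i}\lnot \#\mathcal{F}_{j}$ supplies sets $C_{j}^{i}\in \mathcal{F}_{i}$ and $C_{i}^{j}\in \mathcal{F}_{j}$ with $C_{j}^{i}\cap C_{i}^{j}=\varnothing $. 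With these data I define $X_{n}:=A_{n}\cap \bigcap\nolimits_{k:m_{k}<n}L_{k}\cap \bigcap\nolimits_{p\in B_{n}}C_{p}^{n}$, where $B_{n}$ is the finite set of partners of $n$ in bad pairs; each $X_{n}$ is then a finite intersection of members of $\mathcal{F}_{n}$, hence lies in $\mathcal{F}_{n}$. To verify pairwise disjointness, take $i<j$: if the pair is good then $m_{i}<j$, so $L_{i}$ is a factor of $X_{j}$, giving $X_{j}\subseteq L_{i}$ and $X_{i}\subseteq A_{i}$, whence $X_{i}\cap X_{j}\subseteq A_{i}\cap L_{i}=\varnothing $; if the pair is bad then $X_{i}\subseteq C_{j}^{i}$ and $X_{j}\subseteq C_{i}^{j}$, whence $X_{i}\cap X_{j}\subseteq C_{j}^{i}\cap C_{i}^{j}=\varnothing $. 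Thus $\left( X_{n}\right) _{n}$ is totally disjoint.

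I expect the main obstacle to be precisely the bad pairs: the contour condition only separates $\mathcal{F}_{n}$ from the \emph{tail} of the sequence (indices beyond the threshold $m_{n}$), so the finitely many ``early'' pairs must be handled by the bare disjointness hypothesis, and the two separating mechanisms must be merged into a single finite intersection without destroying membership in $\mathcal{F}_{n}$. The normalization $m_{n}\geq n$ is exactly what keeps all the relevant index sets finite and so makes this merging possible.
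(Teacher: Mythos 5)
Your proof is correct and takes essentially the same approach as the paper's: the forward direction uses the same tail-union witness, and in the converse both arguments separate each $\mathcal{F}_{n}$ from the tail of the sequence via a contour witness and from the finitely many remaining indices via pairwise disjointness, assembling the $X_{n}$ as finite intersections of filter members. The only cosmetic difference is the final bookkeeping: the paper shrinks to a single $F_{n}\in \mathcal{F}_{n}$ whose complement lies in every other filter and then telescopes $X_{n}:=F_{n}\setminus \bigcup_{k<n}F_{k}$, whereas your good/bad-pair separators achieve disjointness directly --- and in doing so make explicit a shrinking step (getting $X\setminus F_{n}\in \mathcal{F}_{k}$ for the finitely many $k<k_{n}$ from bare pairwise disjointness) that the paper's write-up glosses over.
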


\begin{proof}
Let $\left( X_{n}\right) _{n}$ be a sequence of disjoint sets such that $%
X_{n}\in \mathcal{F}_{n}$ for each $n<\omega .$ Then $X_{n}$ and $%
\bigcup_{n\neq k<\omega }X_{k}$ are disjoint and $\bigcup_{n\neq k<\omega
}X_{k}\in \limfunc{Li}\nolimits_{\left( k\right) }\mathcal{F}_{k},$ so that $%
\mathcal{F}_{n}$ is disjoint from $\limfunc{Li}\nolimits_{\left( k\right) }%
\mathcal{F}_{k}.$ Conversely, if $\mathcal{F}_{n}$ is disjoint from $%
\limfunc{Li}\nolimits_{\left( k\right) }\mathcal{F}_{k},$ that is, by
definition, there is $k_{n}>n$ such that $X\setminus F_{n}\in \mathcal{F}%
_{k} $ for each $k\geq k_{n},$ then $F_{n}\in \mathcal{F}_{n}$ and $%
X\setminus F_{n}\in \mathcal{F}_{k}$ for each $k\geq k_{n}.$ If, moreover, $%
\left( \mathcal{F}_{n}\right) _{n}$ is disjoint, then also $X\setminus
F_{n}\in \mathcal{F}_{k}$ for each $k<k_{n}.$ Define $X_{0}:=F_{0}$ and,
inductively, $X_{n}:=F_{n}\setminus \bigcup_{k=0}^{n-1}F_{k}$ for $n>0.$
Then $\left( X_{n}\right) _{n}$ is a sequence of disjoint sets such that $%
X_{n}\in \mathcal{F}_{n}$ for every $n<\omega .$
\end{proof}

\section{The Alternative}

A relation $A\subset \left\{ \left( n,k\right) :n<\omega ,k<\omega \right\} $
is called \emph{transversal} if $A$ is infinite, and $\left\{ l:\left(
n,l\right) \in A\right\} $ and $\left\{ m:\left( m,k\right) \in A\right\} $
are at most singletons for each $n,k<\omega .$

\begin{theoremA}[Alternative]
\label{thm:alt}Let $\left( \mathcal{F}_{n}\right) _{n}$ and $\left( \mathcal{%
G}_{k}\right) _{k}$ be sequences of filters on a set $X$, and let%
\begin{equation*}
\mathcal{F}:=\limfunc{Li}\nolimits_{\left( n\right) }\mathcal{F}_{n}\text{
and }\mathcal{G}:=\limfunc{Li}\nolimits_{\left( k\right) }\mathcal{G}_{k}.
\end{equation*}%
If $\mathcal{F}\#\mathcal{G}$, then one of the following statements holds:

\begin{enumerate}
\item[A.1.] $\mathcal{F}_{n}\#\mathcal{G}_{k}$ for a transversal set of $%
\left( n,k\right) $,\label{A1}

\item[A.2.] $\mathcal{F}\#\mathcal{G}_{k}$ for infinitely many $k,$\label{A2}

\item[A.3.] $\mathcal{F}_{n}\#\mathcal{G}$ for infinitely many $n.$\label{A3}
\end{enumerate}
\end{theoremA}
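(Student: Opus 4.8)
The plan is to prove the contrapositive structure, or rather to argue directly by assuming $\mathcal{F}\#\mathcal{G}$ and showing that if both A.2 and A.3 fail, then A.1 must hold. So first I would negate A.2 and A.3: there exists $k_0$ such that $\mathcal{F}\lnot\#\mathcal{G}_k$ for all $k\geq k_0$, and there exists $n_0$ such that $\mathcal{F}_n\lnot\#\mathcal{G}$ for all $n\geq n_0$. The goal then becomes to manufacture a transversal set of pairs $(n,k)$ with $\mathcal{F}_n\#\mathcal{G}_k$.

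The key is to unwind the definition of the contour as a lower limit. Recall $\mathcal{F}=\bigcup_{j}\bigcap_{n>j}\mathcal{F}_n$, so $F\in\mathcal{F}$ means $F\in\mathcal{F}_n$ for all but finitely many $n$; dually, $H\#\mathcal{F}$ means $H\#\mathcal{F}_n$ for infinitely many $n$ (this grill-versus-meet duality for lower limits is what I would establish or cite as the workhorse). I would use the failure of A.3 to pin down, for each $n\geq n_0$, a witness set $F_n\in\mathcal{F}_n$ and (via the duality) a threshold beyond which $F_n$ is disjoint from $\mathcal{G}_k$; symmetrically the failure of A.2 gives, for each $k\geq k_0$, a witness $G_k\in\mathcal{G}_k$ together with a threshold in $n$. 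Meanwhile $\mathcal{F}\#\mathcal{G}$ unwinds to say that arbitrarily large $\mathcal{F}_n$ still mesh with arbitrarily large $\mathcal{G}_k$. The task is to convert this "infinitely often in each coordinate" meshing into meshing along a genuinely transversal (partial-matching, one-to-one in each row and column) set.

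I would build the transversal set by a diagonal recursion: having chosen pairs $(n_1,k_1),\dots,(n_j,k_j)$, I use the meshing $\mathcal{F}\#\mathcal{G}$ together with the two threshold functions coming from the failures of A.2 and A.3 to locate a fresh pair $(n_{j+1},k_{j+1})$ with $n_{j+1}>n_j$, $k_{j+1}>k_j$, and $\mathcal{F}_{n_{j+1}}\#\mathcal{G}_{k_{j+1}}$. The strictly increasing choice of both indices automatically guarantees that each row and each column meets the constructed set in at most one point, i.e. transversality; and the set is infinite by construction, so A.1 holds.

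The main obstacle I anticipate is the recursion step: ensuring that $\mathcal{F}\#\mathcal{G}$ genuinely forces a \emph{single} pair of large indices that mesh, rather than only a meshing that could be realized by one very small index paired against the tail. Concretely, from $\mathcal{F}\#\mathcal{G}$ one gets, for every $F\in\mathcal{F}$ and $G\in\mathcal{G}$, that $F\cap G\neq\varnothing$; I must choose $F$ lying in a cofinite intersection $\bigcap_{n>j}\mathcal{F}_n$ and $G$ in $\bigcap_{k>j'}\mathcal{G}_k$ and then trace a single nonempty intersection down to a specific coordinate pair $(n,k)$ with both $n$ and $k$ large. Here the failures of A.2 and A.3 are essential: they prevent the meshing from being "absorbed" by the contour on one side, forcing the witnessing indices to be large on \emph{both} sides simultaneously, which is exactly what makes the diagonalization go through. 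Getting this bookkeeping right — coordinating the thresholds from both negated alternatives against the meshing witnesses — is the delicate part.
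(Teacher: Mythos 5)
Your overall skeleton --- negate A.2 and A.3, then extract a transversal set from meshing pairs with both indices arbitrarily large --- is the same reduction the paper makes: it replaces A.1 by the statement that for each $m$ there exists $\left( n,k\right) $ with $m<\min \left\{ n,k\right\} $ and $\mathcal{F}_{n}\#\mathcal{G}_{k}$, from which a transversal set follows exactly as in your last step. The genuine gap is in the mechanism you propose for producing those fresh pairs, and it is the entire content of the theorem, not deferred ``bookkeeping.'' You want to pick $F\in \bigcap_{n>j}\mathcal{F}_{n}$ and $G\in \bigcap_{k>j^{\prime }}\mathcal{G}_{k}$ and ``trace a single nonempty intersection down to a specific coordinate pair $\left( n,k\right) $.'' This cannot work: a point of $F\cap G$ carries no index information, and $\mathcal{F}_{n}\#\mathcal{G}_{k}$ is a universally quantified statement --- \emph{every} member of $\mathcal{F}_{n}$ must meet \emph{every} member of $\mathcal{G}_{k}$ --- so no single intersection witness, however the sets $F$ and $G$ are chosen, can certify meshing of any pair. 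Your duality is correct for a single set $H$ (indeed $H\#\mathcal{F}$ if and only if $H\#\mathcal{F}_{n}$ for infinitely many $n$), but it does not upgrade to filters: the paper's example ``Only the last case holds'' has $\mathcal{G}\#\mathcal{F}$ while $\mathcal{G}_{k}\lnot \#\mathcal{F}_{n}$ for all $n,k$, which is precisely why the conclusion is an alternative rather than always case A.1.

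The missing step must be proved by contradiction: assume additionally that \emph{no} pair with both indices beyond some $m$ meshes, and derive $\mathcal{F}\lnot \#\mathcal{G}$. After reindexing so that $\mathcal{F}_{n}\lnot \#\mathcal{G}_{k}$, $\mathcal{F}\lnot \#\mathcal{G}_{k}$ and $\mathcal{F}_{n}\lnot \#\mathcal{G}$ hold for all $n,k$, one has three families of disjointness witnesses: $X_{n,k}\in \mathcal{F}_{n}$ disjoint from $Y_{n,k}\in \mathcal{G}_{k}$; $F_{\infty ,k}\in \mathcal{F}$ disjoint from $G_{\infty ,k}\in \mathcal{G}_{k}$, with $F_{\infty ,k}\in \mathcal{F}_{n}$ for $n>n_{k}$; and $F_{n,\infty }\in \mathcal{F}_{n}$ disjoint from $G_{n,\infty }\in \mathcal{G}$, with $G_{n,\infty }\in \mathcal{G}_{k}$ for $k>k_{n}$. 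Defining $F_{n,k},G_{n,k}$ by cases (the ``$\infty $'' witnesses on the two tails, $X_{n,k},Y_{n,k}$ on the finite remainder), the intersections $F_{n}:=\bigcap_{k<\omega }F_{n,k}$ and $G_{k}:=\bigcap_{n<\omega }G_{n,k}$ are in fact \emph{finite} intersections --- this is where the failures of A.2 and A.3 are really used, since filters are closed only under finite intersections --- so $F_{n}\in \mathcal{F}_{n}$, $G_{k}\in \mathcal{G}_{k}$, and then $F:=\bigcup_{n<\omega }F_{n}\in \mathcal{F}$ and $G:=\bigcup_{k<\omega }G_{k}\in \mathcal{G}$ are disjoint, contradicting $\mathcal{F}\#\mathcal{G}$. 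Your proposal assembles the right raw materials (the witnesses and thresholds from the negations of A.2 and A.3) but lacks this construction, and without it the recursion never produces its first pair.
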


\begin{proof}
We shall prove the alternative with (A.1) replaced by

\begin{enumerate}
\item[A.0.] for each $m$ there there exists $\left( n,k\right) $ such that $%
m<\min \left\{ n,k\right\} $ and $\mathcal{F}_{n}\#\mathcal{G}_{k}.$
\end{enumerate}

If none of the cases (A.0)(A.2)(A.3) holds, then there is $m<\omega $ such
that%
\begin{eqnarray}
&&\mathcal{F}_{n}\lnot \#\mathcal{G}_{k},  \label{1} \\
&&\mathcal{F}\lnot \#\mathcal{G}_{k},  \label{2} \\
&&\mathcal{F}_{n}\lnot \#\mathcal{G}  \label{3}
\end{eqnarray}%
for each $n,k>m.$ As%
\begin{equation*}
\mathcal{F}=\limfunc{Li}\nolimits_{\left( n\right) _{n>m}}\mathcal{F}_{n}%
\text{ and }\mathcal{G}=\limfunc{Li}\nolimits_{\left( k\right) _{k>m}}%
\mathcal{G}_{k}
\end{equation*}%
for each $m<\omega ,$ we can assume without loss of generality that (\ref{1}%
)-(\ref{3}) hold for each $n,k<\omega .$

Consequently, by (\ref{2}), for each $k,$ there exist $F_{\infty ,k}\in
\mathcal{F}$ and $G_{\infty ,k}\in \mathcal{G}_{k}$ such that $F_{\infty
,k}\cap G_{\infty ,k}=\varnothing .$ By the definition of contour, for each $%
k$ there is $n_{k}>k$ such that $F_{\infty ,k}\in \mathcal{F}_{n}$ for $%
n>n_{k}$. Analogously, for each there exist $F_{n,\infty }\in \mathcal{F}%
_{n} $ and $G_{n,\infty }\in \mathcal{G}$ such that $F_{n,\infty }\cap
G_{n,\infty }=\varnothing .$ By the definition of contour, for each $n$
there exists $k_{n}>n$ such that $G_{n,\infty }\in \mathcal{G}_{k}$ for each
$k>k_{n}.$ By (\ref{1}), for each $\left( n,k\right) $ there exist $%
X_{n,k}\in \mathcal{F}_{n}$ and $Y_{n,k}\in \mathcal{G}_{k}$ such that $%
X_{n,k}\cap Y_{n,k}=\varnothing .$ Let%
\begin{gather*}
F_{n,k}:=F_{\infty ,k}\text{ and }G_{n,k}:=G_{\infty ,k}\text{ if }n>n_{k},
\\
F_{n,k}:=F_{n,\infty }\text{ and }G_{n,k}:=G_{n,\infty }\text{ if }k>k_{k},
\\
F_{n,k}:=X_{n,\infty }\text{ and }G_{n,k}:=Y_{n,\infty },\text{ otherwise.}
\end{gather*}

Finally, define%
\begin{equation*}
F_{n}:=\bigcap\nolimits_{k<\omega }F_{n,k}\text{ and }G_{k}:=\bigcap%
\nolimits_{n<\omega }G_{n,k},
\end{equation*}%
and notice that the intersections above are finite (!), hence $F_{n}\in
\mathcal{F}_{n}$ and $G_{k}\in \mathcal{G}_{k},$ and $F_{n}\cap
G_{k}=\varnothing $ for each $\left( n,k\right) .$ If%
\begin{equation*}
F:=\bigcup\nolimits_{n<\omega }F_{n}\text{ and }G:=\bigcup\nolimits_{k<%
\omega }G_{k},
\end{equation*}%
then $F\in \mathcal{F}$ and $G\in \mathcal{G}$ and $F\cap G=\varnothing $,
which completes the proof.
\end{proof}

\begin{exampleA}[Only the first case holds]
\label{ex:only1}Let $\left( \mathcal{F}_{n}\right) _{n}$ be a totally
disjoint and let $\left( \mathcal{G}_{k}\right) _{k}$ be such that $\mathcal{%
F}_{n}\leq \mathcal{G}_{n}$ for each $n<\omega .$ Then $\mathcal{F}\leq
\mathcal{G}$ hence $\mathcal{F}\#\mathcal{G}.$ As $\left( \mathcal{F}%
_{n}\right) _{n}$ is totally disjoint, $\mathcal{F}_{n}\lnot \#\mathcal{F}$,
hence $\mathcal{F}_{n}\lnot \#\mathcal{G}$ thus $\mathcal{G}_{n}\lnot \#%
\mathcal{F}$ for each $n<\omega .$ Therefore, only (A.1) holds. More
precisely, $\mathcal{F}_{n}\#\mathcal{G}_{n},\;\mathcal{F}_{n}\lnot \#%
\mathcal{G},\;\mathcal{G}_{n}\lnot \#\mathcal{F}$ for each $n<\omega .$
\end{exampleA}

\begin{exampleA}[Only the last case holds]
\label{ex:only}(\footnote{%
A similar example has been communicated to the first author by Tsugunori
Nogura as a counter-example to a certain hypothesis.}) Let $\left( \mathcal{F%
}_{n}\right) _{n}$ be a sequence of totally disjoint filters and let $\left(
\mathcal{G}_{k}\right) _{k}$ be such that $\mathcal{G}_{k}\geq \mathcal{F}$.
Then $\mathcal{F}\lnot \#\mathcal{F}_{n}$ and $\mathcal{G}\geq \mathcal{F}$,
hence $\mathcal{G}_{k}\lnot \#\mathcal{F}_{n}$ and $\mathcal{G}\lnot \#%
\mathcal{F}_{n}$ for each $n,k<\omega .$ Hence, only (A.3) holds. More
precisely, $\mathcal{G}_{k}\lnot \#\mathcal{F}_{n},\;\mathcal{G}\lnot \#%
\mathcal{F}_{n},\;\mathcal{G}_{k}\#\mathcal{F}$ for each $n,k<\omega .$
\end{exampleA}

Of course, by exchanging the roles of the sequences, one gets the case where
only (A.2) holds. Here is an example where all the cases hold.

\begin{exampleA}[All cases hold simultaneously]
Let $\mathcal{F}_{n}\leq \mathcal{F}_{n+1}\leq \ldots \leq \mathcal{G}%
_{k}\leq \mathcal{G}_{k+1}.$ Then $\mathcal{F}_{n}\leq \mathcal{G}_{k}$
(hence $\mathcal{F}_{n}\#\mathcal{G}_{k}$) for each $n,k<\omega $. Also $%
\mathcal{F}\leq \mathcal{G}_{k}$ hence $\mathcal{F}\#\mathcal{G}_{k}$ for
each $k<\omega ,$ and $\mathcal{F}_{n}\leq \mathcal{G}$ hence $\mathcal{F}%
_{n}\#\mathcal{G}$ for each $n<\omega ,$ and $\mathcal{F}\#\mathcal{G}$.
\end{exampleA}

It turns out that all possible variants of the Alternative can hold. More
precisely, if $\varnothing \neq J\subset \left\{ 1,2,3\right\} ,$ then there
exist sequences of filters $\left( \mathcal{F}_{n}\right) _{n}$ and $\left(
\mathcal{G}_{k}\right) _{k}$ such that ($A.j$) holds for $j\in J$ and does
not hold for $j\notin J.$ This can be done even so that the conditions hold
for all the indices, in the sense that will be made precise below.

\begin{propositionA}
If $\varnothing \neq J\subset \left\{ 1,2,3\right\} ,$ then there exist
sequences of filters $\left( \mathcal{F}_{n}\right) _{n}$ and $\left(
\mathcal{G}_{k}\right) _{k}$ such that (A.j) holds for $j\in J$ and does not
hold for $j\notin J$ for every $n=k<\omega .$
\end{propositionA}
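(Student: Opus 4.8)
The plan is to realize each nonempty $J\subseteq\{1,2,3\}$ by an explicit pair of sequences, using a symmetry to halve the work. Exchanging the two sequences, $(\mathcal{F}_{n})_{n}\leftrightarrow(\mathcal{G}_{k})_{k}$ and hence $\mathcal{F}\leftrightarrow\mathcal{G}$, leaves (A.1) invariant and interchanges (A.2) with (A.3); so it suffices, besides the singletons and the full triple, to treat $\{1,2\}$ (whence $\{1,3\}$ by the exchange) and the self-conjugate $\{2,3\}$. The singletons $\{1\},\{2\},\{3\}$ are furnished by Examples~\ref{ex:only1} and~\ref{ex:only} together with the conjugate of the latter, and $\{1,2,3\}$ by the chain Example; in each of these the asserted relations already hold at every diagonal index $n=k$. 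I would also record one remark keeping us inside the hypothesis of the Alternative: if $\mathcal{F}\#\mathcal{G}_{n}$ for all but finitely many $n$, then $\mathcal{F}\#\mathcal{G}$, because every $G\in\mathcal{G}$ belongs to $\mathcal{G}_{n}$ for large $n$ and hence meets every $F\in\mathcal{F}$; symmetrically for (A.3). Thus $\mathcal{F}\#\mathcal{G}$ holds automatically in every remaining case.

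For $J=\{1,2\}$ take $X=\omega$, let $\mathcal{F}_{n}$ be the principal ultrafilter at $n$, and put $\mathcal{G}_{n}:=\{A\subseteq\omega:A\text{ is cofinite and }n\in A\}$. A short computation gives $\mathcal{F}=\limfunc{Li}\nolimits_{n}\mathcal{F}_{n}$ equal to the cofinite filter, and likewise $\mathcal{G}=\limfunc{Li}\nolimits_{n}\mathcal{G}_{n}$ equal to the cofinite filter. Then $\mathcal{F}_{n}\#\mathcal{G}_{n}$ (both contain $n$) and $\mathcal{F}\#\mathcal{G}_{n}$ (the members of $\mathcal{G}_{n}$ are cofinite), whereas $\mathcal{F}_{n}\lnot\#\mathcal{G}$ since $\{n\}$ and the cofinite set $\omega\setminus\{n\}$ are disjoint; so (A.1),(A.2) hold and (A.3) fails at every $n$. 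The exchange then yields $\{1,3\}$.

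For the self-conjugate $J=\{2,3\}$ I would fix a partition $\{D_{n}:n<\omega\}$ of $\omega$ into infinitely many infinite pieces and set $\mathcal{F}_{n}:=\{A:(\omega\setminus D_{n})\setminus A\text{ finite}\}$ and $\mathcal{G}_{n}:=\{A:D_{n}\setminus A\text{ finite}\}$, the cofinite filters carried by the complementary supports $\omega\setminus D_{n}$ and $D_{n}$. As $\omega\setminus D_{n}\in\mathcal{F}_{n}$ and $D_{n}\in\mathcal{G}_{n}$ are disjoint, (A.1) fails at every $n$. The crux is the computation of the two contours. Since the supports $\omega\setminus D_{n}$ are co-small and the $D_{n}$ are pairwise disjoint, no co-infinite set can enter $\limfunc{Li}\nolimits_{n}\mathcal{F}_{n}$: if $T:=\omega\setminus A$ is infinite then $T\subseteq^{\ast}D_{n}$ for at most one $n$, so $(\omega\setminus D_{n})\setminus A$ is infinite for all large $n$ and $A\notin\mathcal{F}$. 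Hence $\mathcal{F}$ is exactly the cofinite filter, while $\mathcal{G}=\limfunc{Li}\nolimits_{n}\mathcal{G}_{n}$ is strictly larger, namely the $A$ with $D_{n}\subseteq^{\ast}A$ for all large $n$. One then verifies (A.2), that the cofinite $\mathcal{F}$ meshes with each $\mathcal{G}_{n}$ (whose members are infinite), and (A.3), that each $\mathcal{F}_{n}$ meshes with $\mathcal{G}$: a given $G\in\mathcal{G}$ almost contains some $D_{m}$ with $m\neq n$, and such $D_{m}$ lies almost entirely in the support $\omega\setminus D_{n}$ of $\mathcal{F}_{n}$, yielding an infinite common part.

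The step I expect to be the main obstacle is exactly this $\{2,3\}$ construction, where one must keep $\mathcal{F}_{n}$ and $\mathcal{G}_{n}$ disjoint while forcing each individual filter to mesh with the \emph{opposite} contour. The delicate ingredient is the asymmetry of the two contours: relativizing cofinite filters to the large sets $\omega\setminus D_{n}$ collapses the contour to the plain cofinite filter, whereas relativizing to the small sets $D_{n}$ produces a strictly finer filter, and one must check that this enlarged $\mathcal{G}$ still meshes with every free $\mathcal{F}_{n}$. The remaining verifications are routine bookkeeping with cofinite filters and the definition of the contour.
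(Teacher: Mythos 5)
Your symmetry reduction, your remark that (A.2) (or (A.3)) already forces $\mathcal{F}\#\mathcal{G}$, and your treatments of the singletons, of $J=\{1,2,3\}$, and of $J=\{1,2\}$ (hence $\{1,3\}$ by exchange) are all correct; this explicit, case-by-case route is genuinely different from the paper's. But the $J=\{2,3\}$ construction --- the very step you flag as the crux --- fails, and for exactly the reason one must guard against: (A.1) asserts that $\mathcal{F}_{n}\#\mathcal{G}_{k}$ for a \emph{transversal} set of pairs $\left( n,k\right) $, so to negate it the meshing must fail at essentially all pairs, not merely on the diagonal (this is also how the paper reads the failure clause: its proof requires that (A.i) fail ``for every $n,k<\omega $'' when $i\notin J$). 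In your partition example, for $n\neq k$ every member of $\mathcal{F}_{n}$ contains all but finitely many elements of $\omega \setminus D_{n}\supseteq D_{k}$, and every member of $\mathcal{G}_{k}$ contains all but finitely many elements of the infinite set $D_{k}$; hence $\mathcal{F}_{n}\#\mathcal{G}_{k}$ whenever $n\neq k$, and $\left\{ \left( m,m+1\right) :m<\omega \right\} $ is a transversal set witnessing (A.1). Your verifications of (A.2) and (A.3) are fine, so the pair you built realizes $J=\{1,2,3\}$, not $\{2,3\}$; checking $\mathcal{F}_{n}\lnot \#\mathcal{G}_{n}$ only at $n=k$ does not negate (A.1). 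The off-diagonal meshing is built into your mechanism: making each $\mathcal{F}_{n}$ live on the \emph{large} complementary support $\omega \setminus D_{n}$ forces it to mesh with every $\mathcal{G}_{k}$, $k\neq n$.

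The gap is repaired --- and the whole case analysis rendered unnecessary --- by the device the paper actually uses: place the three one-alternative examples $\left( \mathcal{F}_{j,n}\right) _{n},\left( \mathcal{G}_{j,k}\right) _{k}$ ($j\in \left\{ 1,2,3\right\} $) on pairwise disjoint sets $X_{j}$, and for a given $J$ set $\mathcal{F}_{n}:=\bigcap\nolimits_{j\in J}\mathcal{F}_{j,n}$ and $\mathcal{G}_{k}:=\bigcap\nolimits_{j\in J}\mathcal{G}_{j,k}$. The contour commutes with these finite intersections, so $\limfunc{Li}\nolimits_{\left( n\right) }\mathcal{F}_{n}=\bigcap\nolimits_{j\in J}\limfunc{Li}\nolimits_{\left( n\right) }\mathcal{F}_{j,n}$ and likewise for $\mathcal{G}$; and since the $X_{j}$ are disjoint, two such sum filters mesh if and only if some pair of components meshes (given disjoint witnesses $F_{j}\cap G_{j}=\varnothing $ in each $X_{j}$, the unions $\bigcup\nolimits_{j}F_{j}$ and $\bigcup\nolimits_{j}G_{j}$ are disjoint). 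Consequently exactly the alternatives with $j\in J$ survive: those in $J$ hold at every $n=k$, those outside $J$ fail at every $\left( n,k\right) $. In particular, $J=\{2,3\}$ is obtained by gluing a pure-(A.2) example on $X_{2}$ to a pure-(A.3) example on $X_{3}$; if you want a fixed ground set, transport the disjoint union to $\omega $ by a bijection. Your explicit $\{1,2\}$ example remains a nice self-contained alternative to the paper's uniform construction for that case, but as written the proposal does not prove the proposition.
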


\begin{proof}
We have seen (Examples \ref{ex:only}, \ref{ex:only1} and the subsequent
comment) that for each $j\in \left\{ 1,2,3\right\} ,$ there exist $\left(
\mathcal{F}_{j,n}\right) _{n}$ and $\left( \mathcal{G}_{j,k}\right) _{k}$ on
$X_{j}$ such that (A.j) holds for each $n=k<\omega ,$ and (A.i) does not
hold for every $n,k<\omega $ for $i\neq j.$ We assume that $X_{j}\cap
X_{k}=\varnothing $ if $j\neq k$ and $j,k\in J.$ Then%
\begin{eqnarray*}
\mathcal{F} &=&\limfunc{Li}\nolimits_{\left( n\right) }\mathcal{F}_{n}=%
\limfunc{Li}\nolimits_{\left( n\right) }\bigcap\nolimits_{j\in J}\mathcal{F}%
_{j,n}=\bigcap\nolimits_{j\in J}\limfunc{Li}\nolimits_{\left( n\right) }%
\mathcal{F}_{j,n}, \\
\mathcal{G} &=&\limfunc{Li}\nolimits_{\left( k\right) }\mathcal{G}_{k}=%
\limfunc{Li}\nolimits_{\left( k\right) }\bigcap\nolimits_{j\in J}\mathcal{G}%
_{j,k}=\bigcap\nolimits_{j\in J}\limfunc{Li}\nolimits_{\left( k\right) }%
\mathcal{G}_{j,k},
\end{eqnarray*}%
fulfill the condition.
\end{proof}

Here we get a first answer to our contour inversion problem:

\begin{theoremA}
\label{cor:trans}If $\left( \mathcal{F}_{n}\right) _{n}$ and $\left(
\mathcal{G}_{k}\right) _{k}$ are totally disjoint sequences and if%
\begin{equation}
\mathcal{H}:=\limfunc{Li}\nolimits_{\left( n\right) }\mathcal{F}_{n}=%
\limfunc{Li}\nolimits_{\left( k\right) }\mathcal{G}_{k},  \label{z}
\end{equation}%
then the relation%
\begin{equation}
\Delta :=\left\{ \left( n,k\right) :\mathcal{F}_{n}\#\mathcal{G}_{k}\right\}
\label{delta}
\end{equation}%
is finite-to finite with cofinite domain and range, and
\begin{equation}
\mathcal{H}=\limfunc{Li}\nolimits_{\left( n,k\right) }\left( \mathcal{F}%
_{n}\vee \mathcal{G}_{k}\right) ,  \label{e}
\end{equation}%
where $\left( n,k\right) $ denotes the cofinite filter on $\Delta .$
Moreover, $\mathcal{F}_{n}=\bigcap\nolimits_{k\in \Delta n}\left( \mathcal{F}%
_{n}\vee \mathcal{G}_{k}\right) $ for almost all $n$ and $\mathcal{G}%
_{k}=\bigcap\nolimits_{n\in \Delta ^{-1}k}\left( \mathcal{F}_{n}\vee
\mathcal{G}_{k}\right) $ for almost all $k.$
\end{theoremA}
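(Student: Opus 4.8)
The plan is to read the combinatorial shape of $\Delta$ off the Alternative and then to recover each $\mathcal{F}_{n}$ as a ``reassembly'' of the joins $\mathcal{F}_{n}\vee \mathcal{G}_{k}$. I would begin by recording what total disjointness buys: by the Lemma of Section~\ref{sec:grill}, $\mathcal{H}=\limfunc{Li}\nolimits_{(n)}\mathcal{F}_{n}$ is disjoint from every $\mathcal{F}_{n}$ and $\mathcal{H}=\limfunc{Li}\nolimits_{(k)}\mathcal{G}_{k}$ is disjoint from every $\mathcal{G}_{k}$; since by \eqref{z} these two contours coincide, $\mathcal{H}\lnot\#\mathcal{F}_{n}$ and $\mathcal{H}\lnot\#\mathcal{G}_{k}$ for all $n,k$. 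Finite-to-finiteness is then immediate: fixing $n$ and witnessing $\mathcal{H}\lnot\#\mathcal{F}_{n}$ by $H\in \mathcal{H}$, $F\in \mathcal{F}_{n}$ with $F\cap H=\varnothing$, the membership $H\in \mathcal{H}=\limfunc{Li}\nolimits_{(k)}\mathcal{G}_{k}$ puts $H\in \mathcal{G}_{k}$ for all large $k$, whence $\mathcal{F}_{n}\lnot\#\mathcal{G}_{k}$ for all large $k$ and $\Delta n$ is finite; symmetrically every $\Delta ^{-1}k$ is finite.

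The engine for the rest is a single observation that I would isolate and use three times. Suppose $M\subset \omega$ is infinite and $(\mathcal{F}_{n}^{\prime })_{n\in M}$ are proper filters with $\mathcal{F}_{n}^{\prime }\geq \mathcal{F}_{n}$ and $\mathcal{F}_{n}^{\prime }\lnot\#\mathcal{G}_{k}$ for every $n\in M$ and every $k$. Then $\limfunc{Li}\nolimits_{(n)\in M}\mathcal{F}_{n}^{\prime }\geq \limfunc{Li}\nolimits_{(n)\in M}\mathcal{F}_{n}\geq \mathcal{H}$, so this contour meshes with $\mathcal{H}$, and \thmref{thm:alt} applies to $(\mathcal{F}_{n}^{\prime })_{n\in M}$ and $(\mathcal{G}_{k})_{k}$. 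Each of its three cases is excluded: (A.1) by the hypothesis $\mathcal{F}_{n}^{\prime }\lnot\#\mathcal{G}_{k}$; (A.3) because $\mathcal{F}_{n}\lnot\#\mathcal{H}$ and disjointness is inherited by the finer $\mathcal{F}_{n}^{\prime }$; and (A.2) because $\limfunc{Li}\nolimits_{(n)\in M}\mathcal{F}_{n}^{\prime }\geq \mathcal{H}$ while $\mathcal{H}\lnot\#\mathcal{G}_{k}$, again inherited upward. This contradiction shows that no such $M$ exists.

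Cofinite domain now follows by applying the observation with $\mathcal{F}_{n}^{\prime }=\mathcal{F}_{n}$ to $M=\{n:\Delta n=\varnothing \}$, and cofinite range is symmetric. For the two displayed identities I would first record, using that $\Delta n$ is finite, the distributive computation $\bigcap\nolimits_{k\in \Delta n}(\mathcal{F}_{n}\vee \mathcal{G}_{k})=\mathcal{F}_{n}\vee \bigwedge\nolimits_{k\in \Delta n}\mathcal{G}_{k}$, so that the ``Moreover'' assertion reduces to $\bigwedge\nolimits_{k\in \Delta n}\mathcal{G}_{k}\leq \mathcal{F}_{n}$ for almost all $n$. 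Were this to fail on an infinite $M$, each failure would supply a set $B_{n}=\bigcup\nolimits_{k\in \Delta n}G_{k}$ with $G_{k}\in \mathcal{G}_{k}$ and $B_{n}\notin \mathcal{F}_{n}$; then $\mathcal{F}_{n}^{\prime }:=\mathcal{F}_{n}\vee \{X\setminus B_{n}\}$ is proper, refines $\mathcal{F}_{n}$, and since $X\setminus B_{n}$ is disjoint from each $G_{k}$ it satisfies $\mathcal{F}_{n}^{\prime }\lnot\#\mathcal{G}_{k}$ for $k\in \Delta n$ (and trivially for $k\notin \Delta n$), so the observation again forbids $M$ infinite.

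Finally, for \eqref{e} the inclusion $\mathcal{H}\leq \limfunc{Li}\nolimits_{(n,k)}(\mathcal{F}_{n}\vee \mathcal{G}_{k})$ is immediate from $\mathcal{F}_{n}\leq \mathcal{F}_{n}\vee \mathcal{G}_{k}$ and finite-to-finiteness (a set lying in $\mathcal{F}_{n}$ for all large $n$ lies in $\mathcal{F}_{n}\vee \mathcal{G}_{k}$ for all but finitely many $(n,k)\in \Delta$), while the reverse inclusion is precisely the ``Moreover'' identity applied along the cofinitely many $n$ for which $\bigcap\nolimits_{k\in \Delta n}(\mathcal{F}_{n}\vee \mathcal{G}_{k})=\mathcal{F}_{n}$. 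I expect the ``Moreover'' step to be the main obstacle: the whole argument depends on converting a single witness $B_{n}\notin \mathcal{F}_{n}$ into a filter disjoint from \emph{all} $\mathcal{G}_{k}$ simultaneously, and the reason this succeeds is exactly that $B_{n}$, as an element of $\bigwedge\nolimits_{k\in \Delta n}\mathcal{G}_{k}$, meets each of the finitely many relevant $\mathcal{G}_{k}$ in a single set — so that finiteness of $\Delta n$, which also underwrites the distributive identity, is the hinge of the proof.
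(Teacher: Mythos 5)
Your proof is correct, and although it runs on the same two tools as the paper --- the Lemma of Section~\ref{sec:grill} (total disjointness yields $\mathcal{H}\lnot\#\mathcal{F}_{n}$ and $\mathcal{H}\lnot\#\mathcal{G}_{k}$) and Theorem~\ref{thm:alt} used to forbid infinite exceptional sets, so that your cofinite-domain/range step is essentially the paper's --- the architecture is genuinely different in two places. First, your finite-to-finiteness argument is more elementary: you transport a single witness $H\in\mathcal{H}$ disjoint from some $F\in\mathcal{F}_{n}$ into $\mathcal{G}_{k}$ for all large $k$, where the paper instead compares contours (observing that $\limfunc{Li}\left\{ \mathcal{F}_{n}\vee\mathcal{G}_{k}:k\in\Delta n\right\}$ would be finer both than $\mathcal{F}_{n}$ and than $\mathcal{H}$, contradicting $\mathcal{F}_{n}\lnot\#\mathcal{H}$). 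Second, and more substantially, you invert the dependency between \eqref{e} and the ``Moreover'' clause: the paper proves \eqref{e} first --- via a compressed step asserting that $A\in\limfunc{Li}\nolimits_{\left( n,k\right) }\left( \mathcal{F}_{n}\vee\mathcal{G}_{k}\right)$ gives $A\in\mathcal{F}_{n}\cap\mathcal{G}_{k}$ cofinitely, which is not literally justified as stated, since membership in a supremum does not pass to the factors --- and then derives ``Moreover'' by contradiction, using the disjoint supports $X_{n},Y_{k}$ to assemble a set $U=\bigcup\nolimits_{n\in N}U_{n}\cup\bigcup\nolimits_{n\notin N}X_{n}$ lying in the contour of the joins but not in $\limfunc{Li}\nolimits_{\left( n\right) }\mathcal{F}_{n}$. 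You prove ``Moreover'' first, by your reusable observation (refinements $\mathcal{F}_{n}^{\prime }\geq\mathcal{F}_{n}$ disjoint from all $\mathcal{G}_{k}$ on an infinite set contradict the Alternative) combined with the finite distributive identity $\bigcap\nolimits_{k\in\Delta n}\left( \mathcal{F}_{n}\vee\mathcal{G}_{k}\right) =\mathcal{F}_{n}\vee\bigwedge\nolimits_{k\in\Delta n}\mathcal{G}_{k}$, and then obtain \eqref{e} as a corollary; this never touches the disjoint supports beyond the initial Lemma, and it supplies exactly the justification that the paper's proof of \eqref{e} leaves implicit, with the finiteness of $\Delta n$ correctly identified as the hinge. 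The paper's route is shorter once \eqref{e} is granted; yours is self-contained at that step. Two pedantic additions would round it off: discard the finitely many $n$ with $\Delta n=\varnothing$ (licensed by cofinite domain) before running the ``Moreover'' contradiction, and record that $\mathcal{F}_{n}^{\prime }$ is proper precisely because $B_{n}\notin\mathcal{F}_{n}$, which you implicitly use.
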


\begin{proof}
As $\left( \mathcal{F}_{n}\right) _{n}$ is totally disjoint, $\mathcal{H}%
\lnot \#\mathcal{F}_{n}$ for each $n.$ As $\left( \mathcal{G}_{k}\right)
_{k} $ is totally disjoint, $\mathcal{H}\lnot \#\mathcal{G}_{k}$ for each $%
k. $ Therefore, by the Alternative (Theorem \ref{thm:alt}), there is a
transversal subset of $\Delta .$ Actually, $\Delta $ itself is
finite-to-finite with cofinite domain and range. Indeed, if for some $n$ the
set $\Delta n$ were infinite, then $\limfunc{Li}\left\{ \mathcal{F}_{n}\vee
\mathcal{G}_{k}:k\in \Delta n\right\} \geq \mathcal{F}_{n}\lnot \#\mathcal{H}
$ and, on the other hand, $\limfunc{Li}\left\{ \mathcal{F}_{n}\vee \mathcal{G%
}_{k}:k\in \Delta n\right\} \geq \limfunc{Li}\left\{ \mathcal{G}_{k}:k\in
\Delta n\right\} \geq \mathcal{H}$, which yields a contradiction. The same
argument shows that $\Delta ^{-1}k$ is finite for each $k.$ It follows that%
\begin{equation*}
\limfunc{Li}\nolimits_{\left( n,k\right) }\left( \mathcal{F}_{n}\vee
\mathcal{G}_{k}\right) \geq \mathcal{H}.
\end{equation*}%
Suppose that $A:=\left\{ n:\Delta n=\varnothing \right\} $ is infinite. Then
$\limfunc{Li}\nolimits_{n\in A}\mathcal{F}_{n}\geq \mathcal{H}$ and $%
\mathcal{F}_{n}\lnot \#\mathcal{H},$ hence by Theorem \ref{thm:alt}, there
is a transversal subset of%
\begin{equation*}
\left\{ \left( n,k\right) \in \Delta :n\in A\right\} ,
\end{equation*}%
which yields a contradiction. Alike, one shows that $\left\{ k:\Delta
^{-}k=\varnothing \right\} $ is finite.

Consequently, if $A\in \limfunc{Li}\nolimits_{\left( n,k\right) }\left(
\mathcal{F}_{n}\vee \mathcal{G}_{k}\right) ,$ then $A\in \mathcal{F}_{n}\cap
\mathcal{G}_{k}$ for a cofinite subset of $\Delta ,$ hence $A\in \mathcal{F}%
_{n}\cap \mathcal{G}_{k}$ for almost all $n$ (and almost all $k$), which
implies that $A\in \mathcal{H}$, hence (\ref{e}).

Let $\left\{ X_{n}:n<\omega \right\} $ and $\left\{ Y_{k}:k<\omega \right\} $
be sequences of disjoint sets such that $X_{n}\in \mathcal{F}_{n}$ and $%
Y_{k}\in \mathcal{G}_{k}$ for all $n,k<\omega .$ By definition, $%
\bigcup\nolimits_{\left( n,k\right) \in \Delta }\left( X_{n}\cap
Y_{k}\right) \in \mathcal{H}$. Suppose that there is an infinite set $N$
such that $\mathcal{F}_{n}\ngeqslant \bigcap\nolimits_{k\in \Delta n}\left(
\mathcal{F}_{n}\vee \mathcal{G}_{k}\right) $ for all $n\in N,$ then there
exist $U_{n}\subset X_{n}$ such that $U_{n}\in \bigcap\nolimits_{k\in \Delta
n}\left( \mathcal{F}_{n}\vee \mathcal{G}_{k}\right) \setminus \mathcal{F}%
_{n} $ for each $n\in N.$ Therefore $U:=\bigcup\nolimits_{n\in N}U_{n}\cup
\bigcup\nolimits_{n\notin N}X_{n}\in \mathcal{H}$ from (\ref{e}), but $%
U\notin \limfunc{Li}\nolimits_{\left( n\right) }\mathcal{F}_{n},$ which is a
contradiction with (\ref{z}).
\end{proof}

Let $\left( \mathcal{F}_{n}\right) _{n}$ and $\left( \mathcal{H}_{p}\right)
_{p}$ be sequences of filters on a set $X$. Then $\left( \mathcal{H}%
_{p}\right) _{p}$ is called a \emph{locally finite refinement} of $\left(
\mathcal{F}_{n}\right) _{n}$ if there exists a map $f:\omega \rightarrow
\omega $ with finite fibers such that%
\begin{equation*}
\mathcal{F}_{n}=\bigcap\nolimits_{f\left( p\right) =n}\mathcal{H}_{p}
\end{equation*}%
for almost all $n.$ A locally finite refinement is called a \emph{locally
finite partition} if $\left( \mathcal{H}_{p}\right) _{p}$ is disjoint.

\begin{corollaryA}
If two totally disjoint sequences of filters have the same contour, then
this is also the contour of a their common locally finite refinement.
\end{corollaryA}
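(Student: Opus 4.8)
The plan is to take the common refinement indexed by the relation $\Delta$ furnished by \thmref{cor:trans}, and to read off the three required properties directly from that theorem.

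First I would invoke \thmref{cor:trans}. Since $(\mathcal{F}_n)_n$ and $(\mathcal{G}_k)_k$ are totally disjoint with common contour $\mathcal{H}$, the relation $\Delta=\{(n,k):\mathcal{F}_n\#\mathcal{G}_k\}$ is finite-to-finite with cofinite domain and range, one has $\mathcal{H}=\limfunc{Li}\nolimits_{(n,k)}(\mathcal{F}_n\vee\mathcal{G}_k)$ along the cofinite filter on $\Delta$, and the factorizations $\mathcal{F}_n=\bigcap_{k\in\Delta n}(\mathcal{F}_n\vee\mathcal{G}_k)$ and $\mathcal{G}_k=\bigcap_{n\in\Delta^{-1}k}(\mathcal{F}_n\vee\mathcal{G}_k)$ hold for almost all $n$ and almost all $k$ respectively. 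Because the domain of $\Delta$ is cofinite, $\Delta$ is infinite, so I can fix a bijection $p\mapsto(n_p,k_p)$ of $\omega$ onto $\Delta$ and set $\mathcal{H}_p:=\mathcal{F}_{n_p}\vee\mathcal{G}_{k_p}$; this $(\mathcal{H}_p)_p$ is the candidate common refinement.

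Next I would verify that $(\mathcal{H}_p)_p$ is a locally finite refinement of $(\mathcal{F}_n)_n$, and symmetrically of $(\mathcal{G}_k)_k$. For the first sequence I take $f(p):=n_p$, the composite of the enumeration with the first projection; its fiber $f^{-1}(n)$ is in bijection with $\Delta n$, which is finite since $\Delta$ is finite-to-finite, so $f$ has finite fibers. For every $n$ in the cofinite domain of $\Delta$ at which the factorization holds---hence for almost all $n$---I obtain $\bigcap_{f(p)=n}\mathcal{H}_p=\bigcap_{k\in\Delta n}(\mathcal{F}_n\vee\mathcal{G}_k)=\mathcal{F}_n$, exactly as the definition demands, the finitely many $n$ with $\Delta n=\varnothing$ being absorbed by the ``almost all'' clause. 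The second projection treats $(\mathcal{G}_k)_k$ identically. (Since $\mathcal{H}_p\geq\mathcal{F}_{n_p}$ and distinct pairs of $\Delta$ differ in at least one coordinate, total disjointness even makes $(\mathcal{H}_p)_p$ disjoint, so it is in fact a locally finite partition; but only the refinement is claimed here.)

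Finally I would identify the contour. As $p\mapsto(n_p,k_p)$ is a bijection, it carries the Fr\'echet filter on $\omega$ onto the cofinite filter on $\Delta$, so the sequential contour $\limfunc{Li}\nolimits_{(p)}\mathcal{H}_p$ coincides with $\limfunc{Li}\nolimits_{(n,k)}(\mathcal{F}_n\vee\mathcal{G}_k)$, which equals $\mathcal{H}$ by \eqref{e}. The only delicate points are bookkeeping: matching the ``for almost all'' in the definition of locally finite refinement with the ``for almost all'' of \thmref{cor:trans}, and noting that a bijective re-indexing leaves the contour unchanged because it preserves cofinite sets. There is no genuine analytic obstacle, since \thmref{cor:trans} already supplies all the structural content.
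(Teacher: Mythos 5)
Your proof is correct and takes essentially the same route as the paper: the paper gives no separate proof but remarks immediately after the corollary that the least common refinement is $\mathcal{H}_{n,k}:=\mathcal{F}_{n}\vee \mathcal{G}_{k}$ indexed by $\Delta$, with all three required properties (finite fibers, the factorizations for almost all indices, and the identity of contours) read off from Theorem \ref{cor:trans}, exactly as you do. Your re-indexing of $\Delta$ by a bijection with $\omega$ and the observation that this preserves cofinite sets, hence the contour, is just the bookkeeping the paper leaves implicit.
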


Actually, a least common refinement of totally disjoint sequences $\left(
\mathcal{F}_{n}\right) _{n}$ and $\left( \mathcal{G}_{k}\right) _{k}$ (for
which (\ref{delta}) is finite-to-finite with cofinite domain and range) is
given by%
\begin{equation*}
\mathcal{H}_{n,k}:=\mathcal{F}_{n}\vee \mathcal{G}_{k}.
\end{equation*}

In other words, although the contour of a sequence of filters does not
determine the sequence, but it does the class of its all locally finite
partitions. Therefore, two sequences of filters with the same contour are
intimately related: there exists a third sequence such that each of the two
consists of finite infima of the third.

\section{The Stone space interpretations}

Given a filter $\mathcal{H}$ on a set $X,$ we denote by $\beta \mathcal{H}$
the set of all ultrafilters finer than $\mathcal{H}$. The so defined map $%
\beta $ is an order isomorphism between the complete lattice of (possibly
degenerate) filters on $X$ and of the complete lattice of closed sets in the
Stone topology on $\beta X$ (the set of all ultrafilters on $X$). In
particular,%
\begin{equation*}
\beta \left( \bigcup\nolimits_{i\in I}\mathcal{F}_{i}\right)
=\bigcap\nolimits_{i\in I}\beta \mathcal{F}_{i}\text{ and }\beta \left(
\bigcap\nolimits_{i\in I}\mathcal{F}_{i}\right) =\limfunc{cl}%
\nolimits_{\beta }\left( \bigcup\nolimits_{i\in I}\beta \mathcal{F}%
_{i}\right) .
\end{equation*}%
Therefore, the image by $\beta $ of the contour of $\left( \mathcal{F}%
_{n}\right) _{n}$ is equal to the \emph{upper limit\ }of $\left( \beta
\mathcal{F}_{n}\right) _{n}$ \emph{\ }(\footnote{%
If $\tau $ is a topology, then the upper $\tau $-limit of a sequence $\left(
A_{n}\right) _{n}$ of $\tau $-closed sets is defined by%
\begin{equation*}
\limfunc{Ls}\nolimits_{\left( n\right) }^{\tau }A_{n}:=\bigcap_{n<\omega }%
\limfunc{cl}\nolimits_{\tau }(\bigcup_{k>n}A_{k}).
\end{equation*}%
\par
It is commonly called the \emph{Kuratowski-Painlev\'{e} upper limit}.
However, Kuratowski defines upper and lower limits in 1928, citing a paper
of Painlev\'{e}, in which in fact the concept does not appear. In 1912 a
Painlev\'{e}'s pupil, Zoretti, testifies that Painlev\'{e} used the notion
in 1902. But the upper limit appears in the version 1902-1903 of \emph{%
Formulaire Math\'{e}matique} of Peano, who certainly knew it well before, as
he defined formally the \emph{lower limit} in \emph{Applicazioni geometriche}
in 1887 (see \cite{Tang}).}), namely%
\begin{equation*}
\beta (\limfunc{Li}\nolimits_{\left( n\right) }\mathcal{F}%
_{n})=\bigcap_{n<\omega }\limfunc{cl}\nolimits_{\beta }(\bigcup_{k>n}\beta
\mathcal{F}_{k})=\limfunc{Ls}\nolimits_{\left( n\right) }^{\beta }\beta
\mathcal{F}_{n}.
\end{equation*}

Consequently, in terms of the Stone topology, Theorem \ref{thm:alt} becomes

\begin{theoremA}[Alternative]
\label{thm:alt copy(1)}Let $\left\{ A_{n}:n<\omega \right\} $ and $\left\{
B_{k}:k<\omega \right\} $ be sequences of $\beta $-closed sets. If $\left(
\limfunc{Ls}\nolimits_{\left( n\right) }^{\beta }A_{n}\right) \cap \left(
\limfunc{Ls}\nolimits_{\left( k\right) }^{\beta }B_{k}\right) \neq
\varnothing ,$ then one of the following statements holds:

\begin{enumerate}
\item[A.1.] $A_{n}\cap B_{k}\neq \varnothing $ for a transversal set of $%
\left( n,k\right) $,

\item[A.2.] $A\cap B_{k}\neq \varnothing $ for infinitely many $k,$

\item[A.3.] $A_{n}\cap B\neq \varnothing $ for infinitely many $n.$
\end{enumerate}
\end{theoremA}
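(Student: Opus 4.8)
The plan is to observe that the statement above is nothing but the transcription of Theorem~\ref{thm:alt} under the order isomorphism $\beta$, so the whole argument reduces to setting up a precise dictionary and reading off the three cases. Since $\beta$ maps the lattice of filters on $X$ bijectively onto the lattice of $\beta$-closed subsets of $\beta X$, every $\beta$-closed set is of the form $\beta \mathcal{H}$ for a unique filter $\mathcal{H}$. I would therefore choose filters with $A_{n}=\beta \mathcal{F}_{n}$ and $B_{k}=\beta \mathcal{G}_{k}$, and set $\mathcal{F}:=\limfunc{Li}\nolimits_{\left( n\right) }\mathcal{F}_{n}$ and $\mathcal{G}:=\limfunc{Li}\nolimits_{\left( k\right) }\mathcal{G}_{k}$, so that by the contour formula recorded above $\beta \mathcal{F}=\limfunc{Ls}\nolimits_{\left( n\right) }^{\beta }A_{n}=A$ and $\beta \mathcal{G}=B$.

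The key dictionary entry, and the only step that is not pure bookkeeping, is the equivalence
\[
\mathcal{A}\#\mathcal{B}\iff \beta \mathcal{A}\cap \beta \mathcal{B}\neq \varnothing
\]
for arbitrary filters $\mathcal{A},\mathcal{B}$ on $X$. The direction $\Leftarrow$ is immediate: an ultrafilter $u\in \beta \mathcal{A}\cap \beta \mathcal{B}$ contains both families, so $A\cap B\in u$ for all $A\in \mathcal{A}$, $B\in \mathcal{B}$, forcing $A\cap B\neq \varnothing$. For $\Rightarrow$, meshing says that the family $\left\{ A\cap B:A\in \mathcal{A},B\in \mathcal{B}\right\} $ avoids $\varnothing$, hence generates a proper filter, namely the join $\mathcal{A}\vee \mathcal{B}$; by the identity $\beta \left( \bigcup\nolimits_{i}\mathcal{F}_{i}\right) =\bigcap\nolimits_{i}\beta \mathcal{F}_{i}$ one has $\beta \left( \mathcal{A}\vee \mathcal{B}\right) =\beta \mathcal{A}\cap \beta \mathcal{B}$, and the ultrafilter lemma gives $\beta \left( \mathcal{A}\vee \mathcal{B}\right) \neq \varnothing$.

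With this equivalence in hand I simply run the dictionary on the hypothesis and on each case. The hypothesis $A\cap B\neq \varnothing$ reads $\beta \mathcal{F}\cap \beta \mathcal{G}\neq \varnothing$, that is, $\mathcal{F}\#\mathcal{G}$, so Theorem~\ref{thm:alt} applies. Its conclusion (A.1) $\mathcal{F}_{n}\#\mathcal{G}_{k}$ on a transversal set translates, term by term, to $A_{n}\cap B_{k}\neq \varnothing$ on the same transversal set; (A.2) $\mathcal{F}\#\mathcal{G}_{k}$ for infinitely many $k$ becomes $A\cap B_{k}\neq \varnothing$ for infinitely many $k$; and (A.3) $\mathcal{F}_{n}\#\mathcal{G}$ for infinitely many $n$ becomes $A_{n}\cap B\neq \varnothing$ for infinitely many $n$. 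Thus exactly the three alternatives of the statement are obtained.

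The only genuine content is the meshing-versus-intersection lemma, whose nontrivial direction rests on the ultrafilter lemma; everything else is the formal transport of meshing and of the contour operation through the order isomorphism $\beta$. I therefore anticipate no real obstacle beyond keeping the transversality and the ``infinitely many'' quantifiers matched correctly across the translation, which the term-by-term nature of the dictionary makes routine.
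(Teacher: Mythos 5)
Your proposal is correct and coincides with the paper's own route: the paper offers no separate proof of this theorem, presenting it precisely as the transcription of Theorem~\ref{thm:alt} through the order isomorphism $\beta $, via the identity $\beta (\limfunc{Li}\nolimits_{\left( n\right) }\mathcal{F}_{n})=\limfunc{Ls}\nolimits_{\left( n\right) }^{\beta }\beta \mathcal{F}_{n}$ established just before the statement. Your dictionary lemma $\mathcal{A}\#\mathcal{B}\iff \beta \mathcal{A}\cap \beta \mathcal{B}\neq \varnothing $ (with the ultrafilter lemma for the nontrivial direction) merely makes explicit the one step the paper leaves implicit, and your term-by-term translation of the hypothesis and of (A.1)--(A.3) is exactly what the paper intends.
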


This means that if two upper limits in the Stone topology have non-empty
intersection, then at least infinitely many terms of one sequence meet
either another upper limit, or infinitely many terms of another sequence.
Therefore, the behavior of upper limits with respect to the Stone topology
is rather peculiar. In particular, a dual of Theorem \ref{cor:trans} reads
as follows (we say that a sequence $\left( H_{n}\right) _{n}$ of closed $%
\beta $-closed sets is \emph{totally disjoint} if there is a sequence $%
\left( U_{n}\right) _{n}$ of disjoint $\beta $-open sets (\footnote{%
As each $\beta $-closed set is $\beta $-compact, we can replace in this
statement \emph{open} by \emph{clopen}.}) such that $H_{n}\subset U_{n}$ for
each $n<\omega $)$.$

\begin{theoremA}
If $\left( A_{n}\right) _{n}$ and $\left( B_{k}\right) _{k}$ are totally
disjoint sequences of $\beta $-closed sets and $H:=\limfunc{Ls}%
\nolimits_{\left( n\right) }^{\beta }A_{n}=\limfunc{Ls}\nolimits_{\left(
k\right) }^{\beta }B_{k}$, then%
\begin{equation*}
H=\limfunc{Ls}\nolimits_{\left( n,k\right) }^{\beta }\left( A_{n}\cap
B_{k}\right) ,
\end{equation*}%
where $\left( n,k\right) $ denotes the cofinite filter on $\Gamma :=\left\{
\left( n,k\right) :A_{n}\cap B_{k}\neq \varnothing \right\} .$ Moreover, $%
A_{n}=\bigcup_{k\in \Gamma n}\left( A_{n}\cap B_{k}\right) $ and $%
B_{k}=\bigcup_{n\in \Gamma ^{-1}k}\left( A_{n}\cap B_{k}\right) .$
\end{theoremA}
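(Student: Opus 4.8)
The plan is to derive this result from Theorem \ref{cor:trans} by transporting everything through the order isomorphism $\beta$ between the lattice of filters on $X$ and the lattice of $\beta$-closed sets. Since $\beta$ is an order isomorphism, I would first write $A_{n}=\beta \mathcal{F}_{n}$ and $B_{k}=\beta \mathcal{G}_{k}$ for the uniquely determined filters $\mathcal{F}_{n},\mathcal{G}_{k}$. The three dictionary entries I need are already recorded just before the statement: $\beta (\mathcal{F}_{n}\vee \mathcal{G}_{k})=\beta \mathcal{F}_{n}\cap \beta \mathcal{G}_{k}=A_{n}\cap B_{k}$, so that $A_{n}\cap B_{k}\neq \varnothing$ exactly when $\mathcal{F}_{n}\#\mathcal{G}_{k}$ (whence $\Gamma $ coincides with the set $\Delta $ of Theorem \ref{cor:trans}); the contour--upper-limit identity $\beta (\limfunc{Li}\nolimits_{(n)}\mathcal{F}_{n})=\limfunc{Ls}\nolimits_{(n)}^{\beta }A_{n}$, so that the common value $H$ corresponds to a common contour $\mathcal{H}=\limfunc{Li}\nolimits_{(n)}\mathcal{F}_{n}=\limfunc{Li}\nolimits_{(k)}\mathcal{G}_{k}$ with $H=\beta \mathcal{H}$; and $\beta (\bigcap_{i}\mathcal{F}_{i})=\limfunc{cl}\nolimits_{\beta }(\bigcup_{i}\beta \mathcal{F}_{i})$, which will translate the ``moreover'' formulas.

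Second, I would check that \emph{totally disjoint} has the same meaning on both sides of the dictionary. By the footnote a $\beta $-open set may be taken clopen, and the clopen subsets of $\beta X$ are exactly the sets $\widehat{S}:=\{u:S\in u\}$ for $S\subset X$; moreover $A_{n}=\beta \mathcal{F}_{n}\subset \widehat{S}_{n}$ iff $S_{n}\in \mathcal{F}_{n}$, and the $\widehat{S}_{n}$ are pairwise disjoint iff the $S_{n}$ are. Hence $(A_{n})_{n}$ totally disjoint as $\beta $-closed sets is equivalent to $(\mathcal{F}_{n})_{n}$ totally disjoint as filters, and likewise for $(B_{k})_{k}$ and $(\mathcal{G}_{k})_{k}$. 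Thus the hypotheses of Theorem \ref{cor:trans} are met by $(\mathcal{F}_{n})_{n}$ and $(\mathcal{G}_{k})_{k}$, with $\mathcal{H}$ as above.

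Third, I would read off the conclusions of Theorem \ref{cor:trans} and push them through $\beta $. That theorem gives that $\Delta =\Gamma $ is finite-to-finite with cofinite domain and range, so in particular each fibre $\Gamma n$ and $\Gamma ^{-1}k$ is finite, and it gives $\mathcal{H}=\limfunc{Li}\nolimits_{(n,k)}(\mathcal{F}_{n}\vee \mathcal{G}_{k})$ over the cofinite filter on $\Delta =\Gamma $. Applying $\beta $ together with the contour--upper-limit identity yields at once $H=\limfunc{Ls}\nolimits_{(n,k)}^{\beta }(A_{n}\cap B_{k})$ over the cofinite filter on $\Gamma $, which is the main identity. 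For the ``moreover'' part, Theorem \ref{cor:trans} gives $\mathcal{F}_{n}=\bigcap_{k\in \Gamma n}(\mathcal{F}_{n}\vee \mathcal{G}_{k})$ for almost all $n$; applying $\beta $ and the third dictionary entry gives $A_{n}=\limfunc{cl}\nolimits_{\beta }\bigcup_{k\in \Gamma n}(A_{n}\cap B_{k})$, and since $\Gamma n$ is finite this is a finite union of $\beta $-closed sets, hence already $\beta $-closed, so the closure is redundant and $A_{n}=\bigcup_{k\in \Gamma n}(A_{n}\cap B_{k})$; the formula for $B_{k}$ is symmetric.

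The dictionary entries are all recorded in the paragraph preceding the statement, so the only step that is not purely formal is dropping $\limfunc{cl}\nolimits_{\beta }$ in the last paragraph: this rests on the finiteness of the fibres $\Gamma n$, which is precisely the finite-to-finite conclusion of Theorem \ref{cor:trans}, so the substance of the argument has already been carried out there. I would also note, to be scrupulous, that the fibre identities hold for almost all $n$ (resp.\ $k$)---the finitely many indices with $\Gamma n=\varnothing$ being genuine exceptions---so the ``moreover'' clause is to be read with exactly the qualification present in Theorem \ref{cor:trans}.
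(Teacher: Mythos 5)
Your proposal is correct and is essentially the paper's own argument: the paper states this theorem without a separate proof, presenting it precisely as the image of Theorem \ref{cor:trans} under the Stone correspondence ($\beta$ as an order isomorphism, suprema/infima formulas, and the contour--upper-limit identity) set up earlier in that section, which is exactly the translation you carry out. The details you supply---the clopen characterization of total disjointness, the identification of $\Gamma$ with $\Delta$, dropping $\operatorname{cl}_{\beta}$ over the finite fibres, and the \emph{almost all} qualifier inherited from Theorem \ref{cor:trans}---are just the routine dictionary steps the paper leaves implicit.
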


In particular (\footnote{%
We are grateful to professor T. Nogura for having pointed out a mistake in a
preliminary version of this paper.}),

\begin{corollary}
If $A$ and $B$ are countable sets of totally disjoint ultrafilters such that%
\begin{equation*}
\partial _{\beta }A=\partial _{\beta }B,
\end{equation*}%
then $A$ and $B$ are almost equal.
\end{corollary}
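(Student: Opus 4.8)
The plan is to transport the hypothesis about boundaries in the Stone topology back to the filter side, where Theorem \ref{cor:trans} applies almost verbatim. Enumerate $A=\{a_{n}:n<\omega \}$ and $B=\{b_{k}:k<\omega \}$; since the ultrafilters are totally disjoint the enumerations are injective, and the two sequences $\left( a_{n}\right) _{n}$, $\left( b_{k}\right) _{k}$, viewed as sequences of filters, are totally disjoint in the sense of Section \ref{sec:grill}. First I would identify the boundary with the $\beta $-image of a contour: for a totally disjoint family each $a_{n}$ is isolated in $\limfunc{cl}_{\beta }A$ (a clopen set carrying the disjoint witness $M_{n}\in a_{n}$ separates $a_{n}$ from the rest of the closure), so $\partial _{\beta }A=\limfunc{cl}_{\beta }A\setminus A$ is exactly the cluster set $\limfunc{Ls}_{(n)}^{\beta }\{a_{n}\}=\beta (\limfunc{Li}_{(n)}a_{n})$. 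Because $\beta $ is an order isomorphism onto the $\beta $-closed sets, the equality $\partial _{\beta }A=\partial _{\beta }B$ then yields $\mathcal{H}:=\limfunc{Li}_{(n)}a_{n}=\limfunc{Li}_{(k)}b_{k}$.

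The decisive second step is the remark that two ultrafilters mesh if and only if they coincide: if $a_{n}\#b_{k}$, then $a_{n}\vee b_{k}$ is a proper filter refining the maximal filter $a_{n}$, hence $a_{n}\vee b_{k}=a_{n}$, and symmetrically $=b_{k}$, so $a_{n}=b_{k}$. Consequently the relation $\Delta =\{(n,k):a_{n}\#b_{k}\}$ of Theorem \ref{cor:trans} coincides with $\{(n,k):a_{n}=b_{k}\}$; and since the $a_{n}$ are pairwise distinct and so are the $b_{k}$, it is a partial bijection between subsets of $\omega $.

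Now I would invoke Theorem \ref{cor:trans} with $\mathcal{F}_{n}=a_{n}$ and $\mathcal{G}_{k}=b_{k}$: the two totally disjoint sequences share the contour $\mathcal{H}$, so $\Delta $ has cofinite domain and cofinite range. Cofinite domain means that for all but finitely many $n$ there is $k$ with $a_{n}=b_{k}$, i.e. $a_{n}\in B$, whence $A\setminus B$ is finite; cofinite range gives $B\setminus A$ finite in the same way. Therefore $A\triangle B$ is finite, which is the asserted almost equality. Note that only the cofiniteness clauses of Theorem \ref{cor:trans} are used here: the finite-to-finite clause is automatic, as $\Delta $ is already a partial bijection.

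I expect the sole genuine obstacle to be the first step, namely pinning down that $\partial _{\beta }A$ really equals $\limfunc{Ls}_{(n)}^{\beta }\{a_{n}\}$ and hence corresponds to the contour of $\left( a_{n}\right) _{n}$. This rests entirely on total disjointness, which furnishes pairwise disjoint $M_{n}\in a_{n}$ whose clopen sets isolate each $a_{n}$ in $\limfunc{cl}_{\beta }A$, so that no $a_{n}$ is a cluster point and $\limfunc{cl}_{\beta }A\setminus A$ is precisely the set of accumulation points. Everything past this translation is immediate from the ultrafilter mesh dichotomy and Theorem \ref{cor:trans}.
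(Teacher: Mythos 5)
Your proposal is correct and is essentially the paper's own argument: the corollary is presented there as an immediate particular case (``In particular'') of the Stone-space dual of Theorem~\ref{cor:trans} applied to the singleton closed sets $\{a_{n}\}$ and $\{b_{k}\}$, which is exactly your proof transported through the order isomorphism $\beta$. The two details you verify explicitly --- that total disjointness isolates each $a_{n}$ in $\limfunc{cl}_{\beta}A$, so that $\partial_{\beta}A=\limfunc{Ls}\nolimits_{(n)}^{\beta}\{a_{n}\}=\beta(\limfunc{Li}\nolimits_{(n)}a_{n})$, and that two ultrafilters mesh only if they coincide --- are precisely the steps the paper leaves implicit.
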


The condition of being totally disjoint cannot be dropped.

\begin{exampleA}
Let $\left( p_{n}\right) _{n}$ be a totally disjoint sequence of
ultrafilters. Hence $\partial _{\beta }\left\{ p_{m}:m<\omega \right\} \cap
\left\{ p_{n}:n<\omega \right\} =\varnothing .$ Let $\left( q_{k}\right)
_{k} $ be a sequence of distinct ultrafilters such that $q_{2k}=p_{k}$ and $%
q_{2k+1}\in \partial _{\beta }\left\{ p_{m}:m<\omega \right\} $ for each $%
k<\omega .$ Then $\partial _{\beta }\left\{ q_{2k+1}:k<\omega \right\}
\subset \partial _{\beta }\left\{ p_{m}:m<\omega \right\} $%
\begin{equation*}
\partial _{\beta }\left\{ p_{m}:m<\omega \right\} =\partial _{\beta }\left\{
q_{2k}:k<\omega \right\} =\partial _{\beta }\left\{ q_{k}:k<\omega \right\} ,
\end{equation*}%
but $\left\{ q_{k}:k<\omega \right\} \setminus \left\{ p_{m}:m<\omega
\right\} $ is infinite.
\end{exampleA}

\section{Sequential cascades and contours\label{sec:casc}}

A \emph{cascade} $\left( V,\sqsubset \right) $ is a tree with a least
element $\varnothing _{V},$ well-founded for the inverse order (\footnote{%
That is, the set $\max A,$ of maximal elements of each non empty subset $A$
of $T$, is nonempty}). A cascade is called \emph{sequential} if for every $%
v\in V\setminus \max V$, the set $V^{+}\left( v\right) $ of immediate
successors of $v$ is countably infinite (\footnote{%
See S.\ Dolecki, F. Mynard \cite{cascades}, S.\ Dolecki \cite{multisequences}%
, S.\ Dolecki, A. Starosolski, S. Watson \cite{DSW}.}).

Each cascade is order-isomorphic to a full, closed downwards subtree of the
\emph{sequential tree} $\Sigma $ (\footnote{%
That is, the set of finite sequences of natural numbers (the empty set is
denoted by $\varnothing $). If $s$ and $t$ are elements of $\Sigma $, then
the \emph{concatenation} of $s$ and $t$ is denoted by $s\frown t$. The
abbreviation $(s,n)$ for $s\frown (n)$ (where $s\in \Sigma $ and $n<\omega $%
) is a useful abuse of notation. By definition, $s<t$ if there is a
non-empty finite sequence $r$ such that $t=s\frown r$. With so defined
partial order $\Sigma $ becomes a tree.})(\footnote{%
A subset $T$ of a partially ordered set $\left( \Sigma ,\sqsubset \right) $
is \emph{closed downwards} if $v\sqsubset s\in T$ implies $v\in T$.})(%
\footnote{%
A subset $T$ of $\Sigma $ is called \emph{full} if $T\cap \Sigma ^{+}(s)\neq
\varnothing $ implies that $\Sigma ^{+}(s)\subset T$ for every $s\in \Sigma $%
.}). The standard convergence $\sigma $ on $\Sigma $ (\footnote{%
By a convergence on $X$ we understand a relation $x\in \lim \mathcal{F}$
between filters $\mathcal{F}$ on $X$ and elements $x$ of $X$ such that $%
\mathcal{F}\leq \mathcal{G}$ implies $\lim \mathcal{F}\subset \lim \mathcal{G%
}$, and $x\in \lim \{x\}^{\uparrow }$ where $\{x\}^{\uparrow }$ stands for
the principal ultrafilter of $x$. If $\mathcal{B}$ is a filter base, then we
often abridge $\lim \mathcal{B}\ $for the limit of the filter generated by $%
\mathcal{B}$.}) is that for which the cofinite filter on $\Sigma ^{+}\left(
s\right) $ converges to $s\ $(\footnote{%
The \emph{cofinite filter} $\left( A\right) _{0}$ of an infinite subset $A$
of a set $X$ is defined in \cite{CFT} by%
\begin{equation*}
\left( A\right) _{0}:=\left\{ F\subset X:\limfunc{card}\left( A\setminus
F\right) <\aleph _{0}\right\} .
\end{equation*}%
})$.$ The finest topology $\func{T}\sigma $ compatible with $\sigma $ is
called the \emph{standard topology.} The standard convergence $\left. \sigma
\right\vert _{V}$ induced on a sequential cascade $V$ and the induced
standard topology fulfill%
\begin{equation*}
T\left( \left. \sigma \right\vert _{V}\right) =\left. T\sigma \right\vert
_{V}.
\end{equation*}

Let $V$ be a sequential cascade. The elements of $\limfunc{ext}%
V=\{\varnothing _{V}\}\cup \max V$ are called \emph{extremal} and the
restriction of the standard topology of $V$ to $\limfunc{ext}V$ will be
called an \emph{Arens topology}. Each Arens topology is \emph{prime}, that
is, such that there is at most one non isolated element. Such spaces were
applied in the study of subsequential topologies by Franklin and Rajagopalan
in \cite{FrRaj}. The \emph{rank} $r_{V}(v)$ of $v\in V$ is defined
inductively to be $0$ if $v\in \max V$, and otherwise the least ordinal
greater than the ranks of the successors of $v$. The rank $r(V)$ of a
sequential cascade $V$ is by definition the rank of $\varnothing _{V}$. It
is always a countable ordinal.

The \emph{level} $l_{V}(\varnothing )=0$ and for $v\neq \varnothing _{V}$
the \emph{level }of $v$ is defined by
\begin{equation*}
l_{V}(v)=\max \{l_{V}(s)+1:s\sqsubset v\}.
\end{equation*}%
As $\left\{ s\in V:s\sqsubset v\right\} $ is well ordered, the level $%
l_{V}(v)$ is finite for every $v\in V.$ The set%
\begin{equation*}
\limfunc{sp}\nolimits_{V\ }\left( v\right) :=\left\{ r_{V}\left( s\right)
:s\sqsubseteq v\right\}
\end{equation*}%
is called the \emph{spectrum} of $v$ in $V.$ It is helpful to have in mind
that each element of a cascade has finite spectrum.

The \emph{(sequential) contour} $\int V$ of a sequential cascade $V$ is
defined, by induction. If $r\left( V\right) =0,$ then $\int V$ is the
principal filter of $\varnothing _{V};$ otherwise,%
\begin{equation}
\int V:=\limfunc{Li}\nolimits_{\left( V^{+}\left( \varnothing _{V}\right)
\right) _{0}}\left( \int V\left( v\right) \right) ,  \label{concasc}
\end{equation}%
where $V\left( v\right) $ is the cascade formed by all the successors of $%
v\in V,$ and the exterior contour is taken over the cofinite filter $\left(
V^{+}\left( \varnothing _{V}\right) \right) _{0}$ on the set $V^{+}\left(
\varnothing _{V}\right) ,$ of the immediate successors of $\varnothing _{V}$%
. It turns out that $\int V$ coincides with the trace on $\max V$ of the
neighborhood filter of $\varnothing _{V}$ with respect to the standard
topology, that is, with the neighborhood filter of $\varnothing _{V}$ of the
corresponding Arens topology \cite{cascades}. The \emph{rank} of a
sequential contour $\mathcal{F}$ on $X$ is the least ordinal $r(V)$ of a
sequential cascade $V$ such that $\mathcal{F}=\int V.$

A sequential cascade $V$ is called \emph{monotone} if for each $v\in
V\setminus \max V$ the set $V^{+}\left( v\right) ,$ of immediate successors
of $v,$ admits an order of the type $\omega _{0}$ for which $r_{V}$ is
non-decreasing. A sequential cascade $V$ is called \emph{asymptotically
monotone} if the rank function $r_{V}$ is lower semicontinuous. A sequential
contour $\mathcal{F}$ is called \emph{monotone} if there is a monotone
(equivalently, an asymptotically monotone) sequential cascade $V$ such that $%
\mathcal{F}=\int V$ (\footnote{%
Each asymptotically monotone sequential cascade is homeomorphic to a \emph{%
monotone }sequential cascade \cite{DSW}\cite{multisequences}.}). Monotone
sequential contours have particularly simple structure among sequential
contours.

\subsection{Notation}

Let $\left( V,\sqsubset \right) $ be a cascade. If $I\subset \left[
0,r\left( V\right) \right] ,$ then we write%
\begin{equation*}
V^{I}:=\left\{ v\in V:r_{V}\left( v\right) \in I\right\} ,
\end{equation*}%
and, in particular, $V^{\alpha }=\left\{ v\in V:r_{V}\left( v\right) =\alpha
\right\} .$

We define the relations (\footnote{%
The notation is different from that used in \cite{StarosolskiCOU}.})%
\begin{equation*}
V^{\uparrow }:=\left\{ \left( v,x\right) \in V\times V:v\sqsubseteq
x\right\} \text{ and }V^{\downarrow }:=\left\{ \left( v,x\right) \in V\times
V:v\sqsupseteq x\right\} .
\end{equation*}%
Of course, $V^{\downarrow }=\left( V^{\uparrow }\right) ^{-},$ that is, $%
V^{\downarrow }$ is the inverse relation of $V^{\uparrow }$. As it is
customary, the image of an element $v$ of $V$ by $V^{\uparrow }$ is $%
V^{\uparrow }\left( v\right) ,$ and $V^{\uparrow }\left[ A\right] $ denotes
the image of a set $A$ by $V^{\uparrow }.$ However, we shall use the
following abbreviations:%
\begin{eqnarray*}
V\left( v\right) &:&=V^{\uparrow }\left( v\right) =\left\{ x\in
V:v\sqsubseteq x\right\} ,\;V\left[ A\right] :=\bigcup\nolimits_{v\in
A}V\left( v\right) , \\
V^{-}\left( v\right) &:&=V^{\downarrow }\left( v\right) =\left\{ x\in
V:v\sqsupseteq x\right\} ,\;V^{-}\left[ A\right] :=\bigcup\nolimits_{v\in
A}V^{-}\left( v\right) .
\end{eqnarray*}%
If $V$ is a cascade and $S\subset \max V,$ then%
\begin{equation}
\left[ S_{\#}V\right] :=\left\{ v\in V:S\#\int V\left( v\right) \right\} .
\label{meshS}
\end{equation}

\begin{lemmaA}
Let $V$ be a monotone sequential cascade. If $S\#\int V,$ then there is a
biggest subset $S_{\infty }$ of $S$ such that%
\begin{equation*}
V_{S}:=V^{-1}\left[ S_{\infty }\right]
\end{equation*}%
is a monotone sequential cascade.
\end{lemmaA}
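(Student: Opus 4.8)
The plan is to reduce the whole statement to a single recursive meshing criterion, build $S_\infty$ as the maximal elements of a canonical ``core'' of $[S_{\#}V]$, and then use the asymptotic monotonicity of $V$ to show that passing to this core preserves ranks, which in turn forces monotonicity. First I would establish, straight from the inductive definition \eqref{concasc} of $\int V\left( v\right) $ as the contour of the cofinite filter on $V^{+}(v)$, the equivalence: for $v\in \max V$ one has $S\#\int V\left( v\right) $ iff $v\in S$, while for $v\notin \max V$ one has $S\#\int V\left( v\right) $ iff $V^{+}(v)\cap \lbrack S_{\#}V]$ is infinite. The only nontrivial implication is that if just finitely many immediate successors $w$ satisfy $S\#\int V\left( w\right) $, then choosing for each of the cofinitely many remaining $w$ a witness $A_{w}\in \int V\left( w\right) $ with $A_{w}\cap S=\varnothing $ and setting $A:=\bigcup A_{w}$ gives $A\in \int V\left( v\right) $ (it lies in $\int V\left( w\right) $ for all but finitely many $w$) with $A\cap S=\varnothing $; disjointness of the subtrees $\max V\left( w\right) $ makes the union harmless. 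In particular $[S_{\#}V]\cap \max V=S$ and, since $S\#\int V$, $\varnothing _{V}\in \lbrack S_{\#}V]$.

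Next I would construct $S_\infty$. Put $W:=[S_{\#}V]$ and let $W^{\prime }:=\{v\in W:V^{-}\left( v\right) \subseteq W\}$ be its downward-hereditary core. One checks that $W^{\prime }$ is closed downwards and that for $v\in W^{\prime }\setminus \max V$ every meshing successor lies in $W^{\prime }$, because the ancestors of such a $w$ are $V^{-}(v)\cup \{w\}\subseteq W$; hence by the criterion $v$ has infinitely many immediate successors in $W^{\prime }$. Consequently $\max W^{\prime }=W^{\prime }\cap \max V=S\cap W^{\prime }=:S_\infty$, and well-foundedness of the inverse order yields $W^{\prime }=V^{-}[S_\infty]=:V_{S}$. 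Thus $V_{S}$ is a full, closed-downwards sequential subcascade with $\max V_{S}=S_\infty$, and its immediate successors at any non-maximal $v$ are exactly the meshing ones $V^{+}(v)\cap W$.

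The crux is monotonicity, and I expect the identity $r_{V_{S}}(v)=r_{V}(v)$ for all $v\in V_{S}$ to be the main obstacle. I would prove it by induction on $r_{V}(v)$, using that $V$ is asymptotically monotone. Since the $V_{S}$-successors of $v$ are the meshing ones, it suffices to see that their $V$-ranks stay cofinal with $\{r_{V}(w):w\in V^{+}(v)\}$: when $r_{V}(v)=\gamma +1$, cofinitely many successors have rank $\gamma $ and infinitely many mesh, so infinitely many meshing successors have rank $\gamma $; when $r_{V}(v)$ is a limit, for each $\delta <r_{V}(v)$ cofinitely many successors have rank $\geq \delta $, hence infinitely many meshing ones do. Either way the meshing successors realise the same supremum, giving $r_{V_{S}}(v)=r_{V}(v)$. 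The same counting shows that in $V_{S}$ the successor-ranks still satisfy the lower-semicontinuity condition, because only finitely many successors are exceptional in $V$, hence in $V_{S}$; thus $V_{S}$ is asymptotically monotone, and therefore, by the homeomorphism recalled in the footnote, (homeomorphic to) a monotone sequential cascade. This is exactly where the hypothesis on $V$ is indispensable: without asymptotic monotonicity the meshing subcascade could collapse ranks non-uniformly and fail to be monotone.

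Finally I would verify that $S_\infty$ is largest. If $S^{\prime }\subseteq S$ and $V^{-}[S^{\prime }]$ is a sequential cascade, an induction on rank shows $S^{\prime }\#\int V\left( u\right) $ for every $u\in V^{-}[S^{\prime }]$: at maximal $u$ this reads $u\in S^{\prime }$, and at non-maximal $u$ the infinitely many $V^{-}[S^{\prime }]$-successors supply, by the criterion, the needed meshing. Hence $S\#\int V\left( u\right) $ as well (as $S^{\prime }\subseteq S$), so $V^{-}[S^{\prime }]\subseteq W$; in particular for each $s\in S^{\prime }$ the whole path $V^{-}(s)$ lies in $W$, whence $s\in W^{\prime }$ and $s\in S_\infty$. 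Thus $S^{\prime }\subseteq S_\infty$. Since every monotone sequential subcascade is sequential, $S_\infty$ dominates all competitors and, being itself monotone by the previous step, is the biggest subset of $S$ for which $V_{S}$ is a monotone sequential cascade.
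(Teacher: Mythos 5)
Your proof is correct and takes essentially the same route as the paper's: your recursive meshing criterion is exactly the paper's observation that $\left[ S_{\#}V\right] \cap V^{+}\left( v\right) $ is infinite for every non-maximal $v\in \left[ S_{\#}V\right] ,$ and your downward-hereditary core $W^{\prime }$ coincides with the paper's level-by-level union $\bigcup\nolimits_{n<\omega }\left[ S_{\#}V\right] _{n},$ with $S_{\infty }=\max V_{S}.$ The only difference is one of detail: you spell out the rank-preservation argument for monotonicity of $V_{S}$ and the induction establishing that any competitor $S^{\prime }$ satisfies $V^{-}\left[ S^{\prime }\right] \subset \left[ S_{\#}V\right] ,$ both of which the paper's terse proof leaves implicit.
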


\begin{proof}
The condition $S\#\int V$ is equivalent to $\varnothing _{V}\in \left[
S_{\#}V\right] .$ As (\ref{concasc}), the set $\left[ S_{\#}V\right] _{1}:=%
\left[ S_{\#}V\right] \cap V^{+}\left( \varnothing _{V}\right) $ is infinite
if $\varnothing _{V}\notin \max ,$ that is, if $r\left( V\right) >0.$ By
induction, for every $v\in \left[ S_{\#}V\right] _{n}\setminus \max V,$ the
set $\left[ S_{\#}V\right] \cap V^{+}\left( v\right) $ is infinite. Let%
\begin{equation*}
\left[ S_{\#}V\right] _{n+1}:=\bigcup \left\{ \left[ S_{\#}V\right] \cap
V^{+}\left( v\right) :v\in \left[ S_{\#}V\right] _{n}\setminus \max
V\right\} .
\end{equation*}

Then%
\begin{equation*}
V_{S}:=\bigcup\nolimits_{n<\omega }\left[ S_{\#}V\right] _{n}
\end{equation*}%
is the required monotone sequential cascade and $S_{\infty }=\mathrm{max\,}%
V_{S}.$ Indeed, if $v\in \mathrm{max\,}V_{S},$ then there is $n<\omega $
such that $v\in \left[ S_{\#}V\right] _{n}\cap S.$ If $x\sqsubset v,$ then $%
\left[ S_{\#}V\right] \cap V^{+}\left( x\right) $ is infinite. If $v\in
\mathrm{max\,}V\setminus \mathrm{max\,}V_{S},$ then there exists $x\sqsubset
v$ such that $x\notin \left[ S_{\#}V\right] .$
\end{proof}

As $V_{S}$ is a relation, we shall write $V_{S}\left[ B\right] $ and $%
V_{S}^{-}\left[ B\right] $ for the image and the preimage of $B$ by $V_{S}.$

\section{Heredity of the grill relation of sequential cascades}

Observe that if $V$ and $W$ are monotone sequential cascades such that $\int
V\geq \int W,$ then $\int V\lnot \#\int W\left( w\right) $ for each $w\in
W\setminus \left\{ \varnothing _{W}\right\} .$

\begin{theoremA}
\label{thm:alt_seq}Let $V,W$ be monotone sequential cascades with $\max
V\subset X$ and $\max W\subset Y$ and let $\varphi :X\rightarrow Z,\psi
:Y\rightarrow Z.$ If%
\begin{equation}
\varphi (\int V)\#\psi (\int W),  \label{hypo}
\end{equation}%
then there exist $R\subset X$ and $S\subset Y$ such that for each $v\in
V_{R} $ there exists $w\in W_{S}$ for which%
\begin{equation}
\varphi \left( \int V_{R}\left( v\right) \right) \#\psi \left( \int
W_{S}\left( w\right) \right) ,  \label{aa}
\end{equation}%
and for each $w\in W_{S}$ there exists $v\in V_{R}$ such that (\ref{aa})
holds.
\end{theoremA}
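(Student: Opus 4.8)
The plan is to push everything through $\varphi,\psi$ and argue by a well-founded recursion that simultaneously builds the two subcascades and the required meshing relation. First I record three elementary facts that drive the argument. (i) \emph{Images commute with contours}: for any map $h$, $C\in h(\limfunc{Li}_{(n)}\mathcal{F}_n)$ iff $h^{-1}(C)$ lies in cofinitely many $\mathcal{F}_n$, whence $h(\limfunc{Li}_{(n)}\mathcal{F}_n)=\limfunc{Li}_{(n)}h(\mathcal{F}_n)$; in particular $\varphi(\int V)=\limfunc{Li}_{(n)}\varphi(\int V(v_n))$ over the immediate successors $v_n$ of $\varnothing_V$, and likewise for $W$. (ii) \emph{Coarsening preserves meshing}: if $\mathcal{X}'\leq\mathcal{X}$ and $\mathcal{X}\#\mathcal{Y}$ then $\mathcal{X}'\#\mathcal{Y}$; and enlarging a cascade to a larger full subcascade coarsens the image of its contour, so I may enlarge the left cascade without destroying a meshing already in force. (iii) \emph{Meshing with infinitely many terms forces meshing with the contour}: if $\mathcal{X}\#\mathcal{H}_k$ for infinitely many $k$, then $\mathcal{X}\#\limfunc{Li}_{(k)}\mathcal{H}_k$, because each member of the contour lies in $\mathcal{H}_k$ for all large $k$; dually, if $\mathcal{F}_n\#\mathcal{G}_k$ along a transversal set of pairs then $\limfunc{Li}_{(n)}\mathcal{F}_n\#\limfunc{Li}_{(k)}\mathcal{G}_k$, since for any thresholds a transversal pair exceeding both exists and meshing of those two terms yields a common point.

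I will prove, by transfinite recursion on the natural (Hessenberg) sum $r\bigl(V(v)\bigr)\oplus r\bigl(W(w)\bigr)$, the following strengthening: whenever $\varphi(\int V(v))\#\psi(\int W(w))$ there exist full, closed-downward, monotone subcascades $A\subseteq V(v)$ and $B\subseteq W(w)$, rooted at $v$ and $w$, such that $\varphi(\int A)\#\psi(\int B)$, every $v'\in A$ satisfies $\varphi(\int A(v'))\#\psi(\int B(w'))$ for some $w'\in B$, and conversely. Applying this to $(\varnothing_V,\varnothing_W)$, which is legitimate by (\ref{hypo}), and putting $R:=\max A$, $S:=\max B$ (so that $V_R=A$ and $W_S=B$, since $A$ is already the largest monotone subcascade on its own leaves) yields (\ref{aa}) in both directions. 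The base case is $r(V(v))=0$, say $v\in\max V$: then $\varphi(\int V(v))=\{\varphi(v)\}^{\uparrow}\#\psi(\int W(w))$ says exactly $\psi^{-1}(\varphi(v))\#\int W(w)$, so $S_w:=\psi^{-1}(\varphi(v))\cap\max W(w)$ meshes with $\int W(w)$ and the Lemma on the existence of $W(w)_{S_w}$ gives $B:=W(w)_{S_w}$; as every maximal element of $B$ is sent by $\psi$ to $\varphi(v)$, one has $\psi(\int B(w'))=\{\varphi(v)\}^{\uparrow}=\varphi(\int A)$ for $A=\{v\}$ and all $w'\in B$, so all three requirements hold trivially. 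The case $r(W(w))=0$ is identical.

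For the recursive step I apply the Alternative (Theorem \ref{thm:alt}) to $\mathcal{F}_n:=\varphi(\int V(v_n))$ and $\mathcal{G}_k:=\psi(\int W(w_k))$, whose contours are $\mathcal{F}=\varphi(\int V(v))$ and $\mathcal{G}=\psi(\int W(w))$ by (i). In case (A.1) I obtain a transversal set $T$ with $\mathcal{F}_n\#\mathcal{G}_k$; I recurse on each $(v_n,w_k)\in T$ (both ranks, hence the natural sum, strictly drop) and set $A:=\{v\}\cup\bigcup_{(n,k)\in T}A_{n,k}$, $B:=\{w\}\cup\bigcup_{(n,k)\in T}B_{n,k}$, the root meshing following from the transversal half of (iii) and the two covering conditions from the recursion together with (ii). In case (A.2), where $\mathcal{F}\#\mathcal{G}_k$ for infinitely many $k$, I recurse on $(v,w_k)$ (only the right rank drops, so the sum still drops) to get $A^{(k)}\subseteq V(v)$ and $B^{(k)}\subseteq W(w_k)$, and set $A:=\{v\}\cup\bigcup_k A^{(k)}$, $B:=\{w\}\cup\bigcup_k B^{(k)}$. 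Case (A.3) is symmetric.

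The main obstacle is case (A.2) (and its mirror (A.3)): the left node $v$ is reused against infinitely many right children, so the recursion returns competing subcascades $A^{(k)}$ of the single tree $V(v)$ that must be fused into one $A$. Two observations dissolve it. First, since all $A^{(k)}$ share the root, their union is again a full monotone subcascade: at every node the retained children form an infinite subset of $V^{+}(u)$, and I will check that any infinite subset of $V^{+}(u)$ is monotone-orderable (a nondecreasing $\omega$-enumeration restricts to a nondecreasing $\omega$-enumeration), so no appeal to the extraction Lemma is needed for the fusion. Second, fusing enlarges $A^{(k)}$ to $A\supseteq A^{(k)}$, which by (ii) only coarsens each $\varphi(\int A(v'))$ and hence preserves every meshing produced by the recursion, giving both covering conditions; the root meshing $\varphi(\int A)\#\psi(\int B)=\limfunc{Li}_{(k)}\psi(\int B^{(k)})$ is then exactly (iii) applied to $\mathcal{X}=\varphi(\int A)$ and $\mathcal{H}_k=\psi(\int B^{(k)})$, using that $\varphi(\int A)\#\psi(\int B^{(k)})$ holds for every $k$. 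Finally, since the natural sum is a countable ordinal that strictly decreases at each call, the recursion is well-founded, completing the construction.
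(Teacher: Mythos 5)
Your proposal is correct and follows essentially the same route as the paper: decompose both contours over the immediate successors, apply the Alternative (Theorem \ref{thm:alt}), and recurse along a well-founded order on pairs of ranks (your Hessenberg sum plays the role of the paper's $\max$-then-$\min$ order $\prec$), taking unions of the recursively produced data in each of the three cases. Your version is in fact somewhat more scrupulous than the paper's at exactly the points the paper leaves implicit --- carrying the root-meshing invariant $\varphi\left( \int A\right) \#\psi \left( \int B\right) $ through the induction and verifying that fusing the competing subcascades $A^{\left( k\right) }$ of a single tree only coarsens the contours $\varphi\left( \int A\left( v^{\prime }\right) \right) $ and so preserves all meshings --- but these are refinements of the same argument, not a different one.
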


\begin{proof}
In the proof, $V,W,V_{n},W_{k}$ and so on, stand for monotone sequential
cascades. Order $\limfunc{Ord}\times \limfunc{Ord}$ as follows:%
\begin{equation*}
\left( \alpha ,\beta \right) \prec \left( \delta ,\gamma \right)
\end{equation*}%
if either $\max \left( \alpha ,\beta \right) <\max \left( \delta ,\gamma
\right) $ or $\max \left( \alpha ,\beta \right) =\max \left( \delta ,\gamma
\right) $ and $\min \left( \alpha ,\beta \right) <\min \left( \delta ,\gamma
\right) .$ The relation $\prec $ is well-founded (that is, each non-empty
subset of $\limfunc{Ord}\times \limfunc{Ord}$ has an $\prec $-minimal
element, and $\left\{ \left( \alpha ,\beta \right) :\left( \alpha ,\beta
\right) \prec \left( \delta ,\gamma \right) \right\} $ is a set) and we use
it below for induction \cite[Theorem 25']{Jech}.

If $\min \{r(V),r(W)\}=0$, say $r(W)=0,$ then $\psi (\int W)$ is a principal
ultrafilter, say generated by $z$. Since (\ref{hypo}), $\left\{ z\right\}
\#\varphi \left( \int V\right) $ and so $\varphi ^{-1}\left( z\right) \#\int
V.$ It suffices to take $R=\max V_{\varphi ^{-1}\left( z\right) }.$

So let $r\left( V\right) =\alpha >0$ and $r\left( W\right) =\beta >0$ and
assume that the claim is true for $V^{\prime }$ and $W^{\prime }$ such that $%
\left( r\left( V^{\prime }\right) ,r\left( W^{\prime }\right) \right) \prec
\left( \alpha ,\beta \right) .$ Then $\varphi \left( \int V\right) =\limfunc{%
Li}\nolimits_{\left( n\right) }\varphi (\int V_{n})$ with $r\left(
V_{n}\right) <\alpha $ and $\psi \left( \int W\right) =\int_{\left( k\right)
}\psi (\int W_{k})$ with $r\left( W_{k}\right) <\beta $ for each $n,k<\omega
.$

If (\ref{hypo}) holds, then, by Theorem \ref{thm:alt}, one of the following
claims is true:

\begin{enumerate}
\item there is a transversal $A$ such that $\varphi $($\int V_{n})\#\psi
(\int W_{k})$ for each $\left( n,k\right) \in A,$

\item $\varphi $($\int V_{n})\#\psi (\int W)$ for infinitely many $n,$

\item $\psi (\int W_{k})\#\varphi (\int V)$ for infinitely many $k.$
\end{enumerate}

If (1) then $r\left( V_{n}\right) <\alpha $ and $r\left( W_{k}\right) <\beta
,$ hence $\left( r\left( V_{n}\right) ,r\left( W_{k}\right) \right) \prec
\left( \alpha ,\beta \right) $. By the inductive assumption, for each $%
\left( n,k\right) \in A$ there exist $R_{n,k},S_{n,k}$ such that for each $%
v\in V_{R_{n,k}}$ there is $w\in W_{S_{n,k}}$ and for each $w\in W_{S_{n,k}}$
there is $v\in V_{R_{n,k}}$ such that%
\begin{equation*}
\varphi (\int V_{R_{n,k}}\left( v\right) )\#\psi (\int W_{S_{n,k}}\left(
w\right) ).
\end{equation*}%
Then
\begin{equation*}
R:=\bigcup\nolimits_{\left( n,k\right) \in A}R_{n,k}\text{ and }%
S:=\bigcup\nolimits_{\left( n,k\right) \in A}S_{n,k}
\end{equation*}%
satisfy the claim.

If (2), let $B$ be the set of $n$ for which $\varphi (\int V_{n})\#\psi
(\int W).$ Then $\left( r\left( V_{n}\right) ,r\left( W\right) \right) \prec
\left( \alpha ,\beta \right) ,$ hence by the inductive assumption, for each $%
n\in B$ there exist infinite sets $R_{n},S_{n}$ such that such that for each
$v\in \left( V_{n}\right) _{R_{n}}$ there is $w\in W_{S_{n}}$ and for each $%
w\in W_{S_{n}}$ there is so $v\in \left( V_{n}\right) _{R_{n}}$ that%
\begin{equation*}
\varphi (\int \left( V_{n}\right) _{R_{n}}\left( v\right) )\#\psi (\int
W_{S_{n}}\left( w\right) ).
\end{equation*}%
Then%
\begin{equation*}
R:=\bigcup\nolimits_{n\in B}R_{n}\text{ and }S:=\bigcup\nolimits_{n\in
B}S_{n}
\end{equation*}%
satisfies the claim. If (3) then argue \emph{mutatis mutandis} as for (2).
\end{proof}

It should be noted that Theorem \ref{thm:alt_seq} is strictly stronger than
its unilateral variants like that in Corollary \ref{cor:her}. In fact,

\begin{exampleA}
Consider two copies $V,W$ of $\left\{ s\in \Sigma :l\left( s\right) \leq
2\right\} ,$ where $\Sigma $ stands for the sequential tree (\footnotemark[6]%
). Let $\varphi :\max V\rightarrow \max W$ be a bijection given by%
\begin{equation*}
\varphi \left( n,k\right) :=\left\{
\begin{array}{l}
\left( n-1,k+1\right) \text{ if }n>0, \\
\left( k,0\right) \text{ if }n=0.%
\end{array}%
\right.
\end{equation*}%
For $R=\max V,S=\max W,$ we have $V_{R}=V$ and $W_{S}=W.$ Then $\int V\#\int
W,$ $\int W\left( \left( n\right) \right) =\int W\left( \left( n+1\right)
\right) $ and $\int W\left( s\right) =\int W\left( \varphi ^{-1}\left(
s\right) \right) $ for $s\in \max W.$ However, $\int V\left( \left( 0\right)
\right) $ does not mesh $\int W\left( s\right) $ for each $l\left( s\right)
\leq 2.$
\end{exampleA}

\begin{corollaryA}
\label{cor:her}If $V,W$ are monotone sequential cascades such that%
\begin{equation*}
\varphi (\int V)\#\psi (\int W),
\end{equation*}%
then there exists $R\ $such that for each $v\in V_{R}$ there exists $w\in W$
for which%
\begin{equation}
\varphi \left( \int V_{R}\left( v\right) \right) \#\psi \left( \int W\left(
w\right) \right) .  \label{ab}
\end{equation}
\end{corollaryA}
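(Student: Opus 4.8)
The plan is to obtain the corollary directly from Theorem~\ref{thm:alt_seq}, of which it is the unilateral weakening. Applying that theorem to the hypothesis $\varphi(\int V)\#\psi(\int W)$ produces sets $R\subset X$ and $S\subset Y$ carrying the full bilateral matching property; I would simply discard everything about $S$ and keep only the forward half: for each $v\in V_R$ there is some $w\in W_S$ with $\varphi(\int V_R(v))\#\psi(\int W_S(w))$. Since $W_S=W^{-1}[S_\infty]\subset W$, such a $w$ already lies in $W$, so the sole gap between what the theorem delivers and what (\ref{ab}) demands is the replacement of the subcascade contour $\int W_S(w)$ by the full contour $\int W(w)$.

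To close that gap I would record the monotonicity of the contour under passage to a subcascade: if $w\in W_S$, then $W_S(w)=\{x\in W_S:w\sqsubseteq x\}$ is a subcascade of $W(w)$ with the same root $w$, and consequently $\int W_S(w)\geq\int W(w)$. This is a short induction on $r\left(W(w)\right)$ against the recursive definition (\ref{concasc}): at the node $w$ the set $W_S^{+}(w)$ is an infinite subset of $W^{+}(w)$, so any $H$ lying in $\int W(x)$ for cofinitely many immediate successors $x$ of $w$ lies in it for cofinitely many of the retained successors as well, and the inductive hypothesis $\int W(x)\subset\int W_S(x)$ then places $H$ in $\int W_S(w)$; the base case $w\in\max W_S$ is the equality of two principal filters. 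Conceptually this is just the statement recalled in Section~\ref{sec:casc}, that the contour is the trace on the maximal elements of the neighborhood filter of the root: restricting to a subcascade only traces that filter onto a smaller set of maximal elements, which can only refine it.

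With this in hand the conclusion is immediate. Pushing forward by $\psi$ preserves the order, so $\psi(\int W_S(w))\geq\psi(\int W(w))$, and meshing against a finer filter forces meshing against the coarser one (every member of the coarser filter is a member of the finer). Hence $\varphi(\int V_R(v))\#\psi(\int W_S(w))$ yields $\varphi(\int V_R(v))\#\psi(\int W(w))$, which is exactly (\ref{ab}) for the very same $w\in W_S\subset W$, so the set $R$ furnished by the theorem already satisfies the corollary.

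I expect no genuine obstacle here: the substantive inductive content resides entirely in Theorem~\ref{thm:alt_seq}, and the corollary is a formal consequence. The only point demanding care is the contour-monotonicity claim $\int W_S(w)\geq\int W(w)$; it is elementary, but it must be stated explicitly, since without it one cannot legitimately trade the subcascade $W_S(w)$ for the full $W(w)$ inside the meshing relation.
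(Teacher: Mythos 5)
Your proof is correct and takes essentially the same route as the paper, which states Corollary~\ref{cor:her} without a separate argument, treating it as the unilateral weakening of Theorem~\ref{thm:alt_seq} (compare the remark immediately after the theorem that it is strictly stronger than such unilateral variants). Your one addition is to make explicit the step the paper leaves implicit --- that $\int W_{S}\left( w\right) \geq \int W\left( w\right) $ for the full subcascade $W_{S}$ of $W$ (an easy induction via (\ref{concasc}), since $W_{S}^{+}\left( w\right) $ is an infinite subset of $W^{+}\left( w\right) $ at every node), so that meshing with the finer filter $\psi \left( \int W_{S}\left( w\right) \right) $ implies (\ref{ab}) --- and this is exactly right.
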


The heredity property of the meshing relation, described in Corollary \ref%
{cor:her}, can be sharpened when $\varphi (\int V)\#\psi (\int W)$ is
strengthened to $\varphi (\int V)\geq \psi (\int W).$

\begin{theoremA}
\label{thm:sharp}Let $V,W$ be monotone sequential cascades and let $\varphi
:\max V\rightarrow \max W$ be such that $\varphi (\int V)\geq \int W.$ Then
there exists an (infinite) set $T$ such that for each $v\in V_{T}$ there
exists a unique $w\left( v\right) $ for which%
\begin{equation}
\varphi (\int V_{T}\left( v\right) )\#\int W\left( w\left( v\right) \right) .
\label{sharp}
\end{equation}
\end{theoremA}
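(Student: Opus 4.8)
The plan is to separate the uniqueness of $w(v)$ from its existence, and to reduce the whole assertion to producing a subcascade on which $\varphi$ \emph{dominates}, not merely \emph{meshes}, a contour of $W$ at every node.

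First I would isolate a domination lemma that renders uniqueness automatic: \emph{if $\mathcal{A}$ is a filter on $\max W$ with $\mathcal{A}\geq\int W(w)$ for some $w\in W$, then $w$ is the only element of $W$ for which $\mathcal{A}\#\int W(w')$.} To prove it, fix $w'\neq w$ and distinguish three positions. If $w'\sqsupset w$, the observation preceding Theorem~\ref{thm:alt_seq}, applied to the monotone sequential cascade $W(w)$ (which trivially satisfies $\int W(w)\geq\int W(w)$), yields $\int W(w)\lnot\#\int W(w')$; if $w'\sqsubset w$, the same observation applied to $W(w')$ yields $\int W(w')\lnot\#\int W(w)$; and if $w'$ is incomparable with $w$, then $\max W(w)$ and $\max W(w')$ are disjoint, so $\int W(w)\lnot\#\int W(w')$ once more. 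In each case there are $P\in\int W(w)$ and $R\in\int W(w')$ with $P\cap R=\varnothing$; since $\mathcal{A}\geq\int W(w)$ gives $P\in\mathcal{A}$, we obtain $\mathcal{A}\lnot\#\int W(w')$. Thus Theorem~\ref{thm:sharp} reduces to the construction of an infinite $T$ with
\[
\varphi(\int V_T(v))\geq\int W(w(v))\qquad\text{for every }v\in V_T ,
\]
the uniqueness of $w(v)$ being then supplied by the lemma.

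I would obtain such a $T$ by induction along the well-founded order $\prec$ on $(r(V),r(W))$ used in the proof of Theorem~\ref{thm:alt_seq}, proving the slightly more flexible statement in which the hypothesis $\varphi(\int V)\geq\int W$ is weakened to $\varphi(\int V)\#\int W$ (so that the inductive calls remain legitimate). When $r(W)=0$ the cascade $W$ is a point, $\varphi$ is constant, and $w(v)=\varnothing_W$ works; the case $0=r(V)<r(W)$ cannot occur, since a principal ultrafilter does not dominate the free filter $\int W$. For $\alpha=r(V)>0$ and $\beta=r(W)>0$ I write $\int V=\limfunc{Li}\nolimits_{(n)}\int V_n$ and $\int W=\limfunc{Li}\nolimits_{(k)}\int W_k$ along the immediate successors, so that $\varphi(\int V)=\limfunc{Li}\nolimits_{(n)}\varphi(\int V_n)$, and apply the Alternative (Theorem~\ref{thm:alt}) to $\mathcal{F}_n=\varphi(\int V_n)$ and $\mathcal{G}_k=\int W_k$. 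Two kinds of reduction are available. If case (A.2) occurs, some branch satisfies $\varphi(\int V)\#\int W_{k_0}$, and I recurse on the $\prec$-smaller pair $(V,W_{k_0})$; this single call, which keeps all of $V$ and only shrinks $W$, already furnishes a $T$ with domination at \emph{every} node, the root included. Otherwise (A.1) or (A.3) holds, and I recurse on the $\prec$-smaller pairs $(V_n,W_{k(n)})$ along the transversal $n\mapsto k(n)$, respectively $(V_n,W)$ for the infinitely many $n$ with $\varphi(\int V_n)\#\int W$, setting $T=\bigcup_nT_n$.

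The hard part is the second family of reductions, where $V$ is split. There the inductive calls produce domination only inside each retained subtree, assigning to its root $v_n$ a node $w(v_n)$, and it remains to dominate a contour at the root $\varnothing_V$ of the assembled cascade. Under the original hypothesis $\varphi(\int V)\geq\int W$ this is free at the top level: restriction to a subcascade and passage to a subsequence of successors both refine contours, so $\varphi(\int V_T)\geq\varphi(\int V)\geq\int W$ and one may take $w(\varnothing_V)=\varnothing_W$; moreover, since then $\varphi(\int V)\lnot\#\int W_k$ for every $k$ by the observation, case (A.2) is excluded and only the splitting of $V$ survives. The genuine obstacle therefore surfaces at the deeper recursive levels, where one has only the meshing hypothesis: to close the induction one must \emph{upgrade} $\varphi(\int V_n)\#\int W$ (respectively $\#\int W_{k(n)}$) to $\varphi(\int(V_n)_{T_n})\geq\int W(w(v_n))$, discarding exactly the branches along which $\varphi$ collapses or merely grazes $W$, and one must steer these restrictions---using the monotonicity of $W$---so that the nodes $w(v_n)$ are placed compatibly (for instance among the immediate successors of a common node). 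Only such coherent placement makes $\limfunc{Li}\nolimits_{(n)}\int W(w(v_n))$ reconstitute the contour demanded at the node above them. I expect this reconciliation of the two sides to be the crux, and it is presumably the step at which the argument of \cite[Theorem~3.3]{DSW} broke down.
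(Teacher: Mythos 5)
Your reduction of uniqueness to domination is correct --- the lemma that a filter $\mathcal{A}\geq \int W\left( w\right) $ can mesh $\int W\left( w^{\prime }\right) $ for no $w^{\prime }\neq w$ is sound, being essentially the observation opening the section on heredity --- but uniqueness was never the difficulty, and your proposal stops exactly where the theorem begins. The construction of $T$ is left unproven by your own admission, and the induction you sketch is not well-founded, because you never fix a coherent inductive statement: you weaken the hypothesis to $\varphi (\int V)\#\int W$ ``so that the inductive calls remain legitimate,'' yet you still want the conclusion to be the domination $\varphi (\int V_{T}\left( v\right) )\geq \int W\left( w\left( v\right) \right) $ at every node. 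In the (A.2) branch you then assert that a single call on the pair $\left( V,W_{k_{0}}\right) $, which only knows meshing, ``already furnishes a $T$ with domination at every node, the root included'' --- but securing domination at the root of a pair known merely to mesh is exactly the same problem as the ``upgrade'' you concede is unresolved in the (A.1)/(A.3) branches; nothing in your induction provides it. So the heart of the theorem --- producing $T$ together with a coherent assignment $v\mapsto w\left( v\right) $ --- is missing, and is flagged by you yourself as the presumable point where the argument of \cite[Theorem 3.3]{DSW} broke down, which is precisely the step this paper set out to repair.

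The paper closes that gap by abandoning filter domination at internal nodes in favor of a weaker, set-level target that never gets stuck. After Corollary \ref{cor:her} supplies, for every $v\in V_{R}$, some meshing $w$, the proof runs a recursion not on pairs of ranks but on the rank levels $\alpha \leq r\left( V\right) $ of $V$ alone, building a decreasing chain $T_{\alpha }\subset \max V$ subject to the bookkeeping invariants (\ref{S1})--(\ref{S3}); at each $v\in V^{\alpha }$ it forms the set $M\left( v\right) $ of those $w\in W$ admitting an infinite $B\subset V_{T_{<\alpha }}^{+}\left( v\right) $ with the leaf containment $\varphi \left( \max V_{T_{<\alpha }}\left[ B\right] \right) \subset W\left( w\right) $ as in (\ref{wl}), notes that $\varnothing _{W}\in M\left( v\right) $ always --- so the choice never fails, unlike a domination witness --- and picks a $\sqsubset _{W}$-maximal $w\left( v\right) \in M\left( v\right) $ as in (\ref{special}), keeping below $v$ only the bouquet $B\left( v\right) $. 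Uniqueness is then proved by a minimal-counterexample argument on ranks: if $w_{0}\neq w\left( v_{0}\right) $ were also in (\ref{eos}), the containment $\varphi \left( \max V_{T}\left( v_{0}\right) \right) \subset W\left( w\left( v_{0}\right) \right) $ disposes of $w_{0}$ incomparable with, or below, $w\left( v_{0}\right) $, while the case $w\left( v_{0}\right) \sqsubset _{W}w_{0}$ is excluded by the Alternative (Theorem \ref{thm:alt}) combined with the maximality of $w\left( v_{0}\right) $ in $M\left( v_{0}\right) $. Maximality thus substitutes for the ``steering'' you could not perform. Note finally that your stronger node-wise domination is true \emph{a posteriori} --- it follows from the continuous extension of Theorem \ref{thm:ext}, since preimages of open sets pull neighborhood traces back into the contour --- but proving it directly amounts to proving that theorem, so it cannot serve as the engine of the induction.
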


\begin{proof}
By Corollary \ref{cor:her}, there exists a set $R$ such that for every $v\in
V_{R}$ there exists $w\in W$ such that (\ref{ab}) holds with $\psi $ being
the identity. To simplify, let $V:=V_{R}$ and then the condition becomes:
for each $v\in V$ there is $w\in W$ such that $\varphi (\int V\left(
v\right) )\#\int W\left( w\right) .$ In particular, if $v\in V^{1}$ then we
pick any $\sqsubset _{W}$-maximal $w\left( v\right) $ with this property$.$
Notice that $r_{W}\left( w\left( v\right) \right) =1$ if $W\left( w\left(
v\right) \right) $ is free and $r_{W}\left( w\left( v\right) \right) =0$
otherwise.

We shall construct a decreasing sequence of subsets $\left\{ T_{\alpha
}:\alpha \leq r\left( V\right) \right\} $ of $\max V$ such that for each $%
\alpha \leq r\left( V\right) $ and for every $\beta <\alpha ,$%
\begin{gather}
v\in V^{\alpha }\Longrightarrow T_{\alpha }\#\int V_{T_{\alpha }}\left(
v\right) ,  \label{S1} \\
\;x\in V^{\beta }\wedge \left\{ u\in V^{(\beta ,\alpha ]}:u\sqsubset
x\right\} =\varnothing \Longrightarrow x\in V_{T_{\alpha }},  \label{S2} \\
x\in V_{T_{\alpha }}^{\beta }\Longrightarrow V\left( x\right) \cap T_{\alpha
}=V\left( x\right) \cap T_{\beta }.  \label{S3}
\end{gather}

Let $T_{0}:=\max V$ and let%
\begin{equation*}
T_{1}=\bigcup \{V(v)\cap \varphi ^{-1}[\varphi (V(v))\cap W(w(v))]:v\in
V^{1}\}\cup (V\setminus V(V^{1})).
\end{equation*}

Clearly, $T_{1}$ fulfills (\ref{S1}-\ref{S3}) (here $\alpha =1,\beta =0)$.
Proceeding by induction on $1<\alpha \leq r\left( V\right) $, we assume that
$T_{\beta }$ has been defined for each $\beta <\alpha $ and the claim holds
for $\left\{ T_{\beta }:\beta <\alpha \right\} .$ Define%
\begin{equation*}
T_{<\alpha }:=\bigcap_{\beta <\alpha }T_{\beta }.
\end{equation*}

We claim (\footnote{%
{}In particular, by (\ref{S1}), $V_{S_{\alpha }}$ is a full subcascade of $V$
such that $V^{[\alpha ,r\left( V\right) ]}=V_{S_{\alpha }}^{[\alpha ,r\left(
V\right) ]}.$}) that for each $v\in V^{\alpha },$ the set $M\left( v\right) $
consisting of such $w\in W$ that there exists an infinite subset $B$ of $%
V_{T_{<\alpha }}^{+}\left( v\right) $ for which
\begin{equation}
\varphi \left( \max V_{T_{<\alpha }}\left[ B\right] \right) \subset W\left(
w\right) ,  \label{wl}
\end{equation}%
is not empty. By (\ref{S1}), $V_{T_{\alpha }}$ is a full subcascade of $V$
such that $V^{[\alpha ,r\left( V\right) ]}=V_{T_{\alpha }}^{[\alpha ,r\left(
V\right) ]},$ hence $\varnothing _{W}\in M\left( v\right) $ with $%
B=V_{T_{<\alpha }}^{+}\left( v\right) .$ As branches of a cascade are
finite, there exist $\sqsubset _{W}$-maximal elements of $M\left( v\right) .$
Let%
\begin{equation}
w\left( v\right) \in \max M\left( v\right)  \label{special}
\end{equation}%
be any of them, and let $B\left( v\right) $ be any infinite subset of $%
V_{T_{<\alpha }}^{+}\left( v\right) $ corresponding to $w\left( v\right) $
in (\ref{wl}).

Let%
\begin{equation}
T_{\alpha }:=\;\left( V\left[ B\left[ V^{\alpha }\right] \right] \cap
T_{<\alpha }\right) \cup \left( T_{<\alpha }\setminus V\left[ V^{\alpha }%
\right] \right) .  \label{alpha}
\end{equation}

Mind that the set $V\left[ V^{(r\left( V\right) ,\alpha ]}\setminus
V^{\alpha }\right] $ consists of those $x$ for which%
\begin{equation*}
\left\{ u\in V^{(r\left( x\right) ,\alpha ]}:u\sqsubset x\right\}
=\varnothing .
\end{equation*}

To see that (\ref{S1}) holds, let $v\in V^{\alpha }$. Then $B\left( v\right)
\subset B\left[ V^{\alpha }\right] ,$ hence $r\left( b\right) <\alpha $ for
each $b\in B\left( v\right) $, so that, by inductive (\ref{S1}), $T_{r\left(
b\right) }\#\int V_{T_{r\left( b\right) }}\left( b\right) ,$ and by
inductive (\ref{S2}), $V\left( b\right) \cap T_{<\alpha }=V\left( b\right)
\cap T_{r\left( b\right) }.$ As $B\left( v\right) $ is infinite, $V\left(
v\right) \cap T_{\alpha }=\bigcup_{b\in B\left( v\right) }\left( V\left(
b\right) \cap T_{<\alpha }\right) ,$ then $T_{\alpha }\#\int V_{T_{\alpha
}}\left( v\right) $ (\footnote{%
If $H\#\mathcal{A}_{n}$ for infinitely many $n$ then $H\#\int_{\left(
n\right) }\mathcal{A}_{n}.$}).

To see (\ref{S2}), let $x\in V^{\beta }$ such that $\left\{ u\in V^{(\beta
,\alpha ])}:u\sqsubset x\right\} =\varnothing .$ By the second part of (\ref%
{alpha}), $T_{<\alpha }\cap V\left[ V^{(\alpha ,r\left( V\right) ]}\setminus
V^{\alpha }\right] \subset T_{\alpha }$ so that
\begin{equation*}
T_{\alpha }\cap V\left( x\right) =T_{<\alpha }\cap V\left( x\right)
=T_{r\left( x\right) }\cap V\left( x\right) .
\end{equation*}%
To see (\ref{S3}), let $x\in V_{T_{\alpha }}^{\beta }.$ Then either

\begin{enumerate}
\item there exist $v\in V^{\alpha }$ and $y\in V^{+}\left( v\right) $ such
that $y\sqsubseteq x,$

\item or $x\notin V\left( V^{\alpha }\right) .$
\end{enumerate}

If (1) then, by inductive assumption, $V\left( x\right) \cap T_{r\left(
x\right) }=V\left( x\right) \cap T_{r\left( y\right) }$ and $V\left(
y\right) \cap T_{r\left( y\right) }=V\left( y\right) \cap T_{\alpha }$ by
the first part of (\ref{alpha}), so $V\left( x\right) \cap T_{r\left(
y\right) }=V\left( x\right) \cap T_{\alpha },$ because $V\left( x\right)
\subset V\left( y\right) .$ If (2), then by the second part of (\ref{alpha}),%
\begin{equation*}
T_{<\alpha }\cap \left( V\setminus V\left[ V^{\alpha }\right] \right)
=T_{\alpha }\cap \left( V\setminus V\left[ V^{\alpha }\right] \right) ,
\end{equation*}%
thus, by the inductive assumption (\ref{S3}), we are done.

Let $T:=T_{r\left( V\right) }.$ The proof will be complete if we show that
for each $v\in V_{T},$ the set%
\begin{equation}
\left\{ w\in W:\varphi \left( \int V_{T}\left( v\right) \right) \#\int
W\left( w\right) \right\}  \label{eos}
\end{equation}%
is either empty or a singleton. Indeed, $\varphi \left( \int V\right) \geq
\int W$ implies that $\varphi \left( T\right) \#\varphi \left( \int V\right)
\#\int W,$ thus $\varphi \left( \int V_{T}\right) \geq \int W,$ hence by
Corollary \ref{cor:her}, there exist $R,S$ such that for each $v\in V_{T\cap
R}$ there is a $w\in W_{S}$ such that $\varphi \left( \int V_{T\cap R}\left(
v\right) \right) \#\int W_{S}\left( w\right) .$
\end{proof}

\begin{lemmaA}
For each $v\in V_{T},$ the set (\ref{eos}) is either empty or a singleton.
\end{lemmaA}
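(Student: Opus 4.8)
The plan is to prove the lemma by induction on the rank $\alpha=r_{V_{T}}(v)$, strengthening the statement so as to record also a \emph{localization}: if $v\in V_{T}$ with $r_{V_{T}}(v)=\alpha$ and $w(v)$ is the element selected in (\ref{special}), then $\varphi(\max V_{T}(v))\subseteq\max W(w(v))$, and whenever (\ref{eos}) is non-empty its unique element is exactly $w(v)$. The localization is immediate from the construction: by (\ref{wl}) and (\ref{special}) we have $\varphi(\max V_{T_{<\alpha}}[B(v)])\subseteq W(w(v))$, and by (\ref{alpha}) the restriction at ranks $>\alpha$ only shrinks $V(v)$, so $\max V_{T}(v)\subseteq\max V_{T_{<\alpha}}[B(v)]$ and hence $\max W(w(v))\in\varphi(\int V_{T}(v))$; the identification of $V_{T}$ with $V_{T_{<\alpha}}$ on the lower-rank successors is exactly what (\ref{S2})--(\ref{S3}) provide. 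For the base case $\alpha=0$ the filter $\varphi(\int V_{T}(v))$ is the principal ultrafilter of $\varphi(v)\in\max W$, which meshes $\int W(w)$ only for $w=\varphi(v)$, so (\ref{eos}) is a singleton.

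First I would confine every meshing $w$ to the subtree above $w(v)$. Since $\max W(w(v))\in\varphi(\int V_{T}(v))$, any $w$ meshing $\varphi(\int V_{T}(v))$ satisfies $\max W(w(v))\#\int W(w)$. If $w(v)\not\sqsubseteq w$, then either $w$ and $w(v)$ are incomparable, in which case $\max W(w)\in\int W(w)$ is disjoint from $\max W(w(v))$, or $w\sqsubset w(v)$, in which case the union of the top sets of all children of $w$ other than the one lying below $w(v)$ is a member of $\int W(w)$ disjoint from $\max W(w(v))$; both are elementary consequences of (\ref{concasc}) and contradict meshing. Hence every meshing $w$ satisfies $w(v)\sqsubseteq w$.

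The heart of the argument is to exclude proper descendants. Choose a $\sqsubset_{W}$-maximal $w$ among the (finitely many, as branches are finite) meshing elements $\sqsupseteq w(v)$ and suppose, for contradiction, that $w\sqsupset w(v)$; let $c$ be the child of $w(v)$ with $c\sqsubseteq w$. Writing $\varphi(\int V_{T}(v))=\limfunc{Li}\nolimits_{b}\varphi(\int V_{T}(b))$ over $b\in V_{T}^{+}(v)$ (each of rank $<\alpha$) and applying the Alternative (Theorem \ref{thm:alt}) to $\int W(w)\#\limfunc{Li}\nolimits_{b}\varphi(\int V_{T}(b))$, I obtain one of (A.1)--(A.3). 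Case (A.2) would furnish a child of $w$ that still meshes $\varphi(\int V_{T}(v))$, contradicting the maximality of $w$. In cases (A.1) and (A.3) there are infinitely many $b$ for which $\varphi(\int V_{T}(b))$ meshes $\int W(w)$, respectively a child of $w$ that is $\sqsupseteq c$; by the inductive uniqueness the unique meshing element of such a $b$ is its selected $w(b)$, whence $w(b)\sqsupseteq c$, and by the inductive localization $\varphi(\max V_{T}(b))\subseteq\max W(w(b))\subseteq\max W(c)$. Thus the infinite set $B'$ of these $b$ satisfies $\varphi(\max V_{T_{<\alpha}}[B'])\subseteq W(c)$, i.e.\ $c\in M(v)$, contradicting $w(v)\in\max M(v)$ from (\ref{special}). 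Therefore no proper descendant meshes, and together with the previous step the only possible meshing element is $w(v)$, which shows that (\ref{eos}) is empty or a singleton.

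The step I expect to be the main obstacle is this last one: turning the purely filter-theoretic meshing $\int W(w)\#\varphi(\int V_{T}(v))$ into the combinatorial statement $c\in M(v)$ that contradicts the maximal choice of $w(v)$. This is where the Alternative and the full strength of the inductive hypothesis (not only uniqueness, but the localization $\varphi(\max V_{T}(b))\subseteq\max W(w(b))$) are both needed, together with the identifications $V_{T}(b)=V_{T_{<\alpha}}(b)$ for lower-rank successors furnished by (\ref{S2})--(\ref{S3}); the remaining case of the Alternative is disposed of by the maximal choice of $w$ along the finite $W$-branch.
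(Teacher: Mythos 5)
Your proof is correct and is essentially the paper's own argument: you reduce the lemma to showing that any meshing $w$ must equal $w(v)$, dispose of the cases where $w$ is incomparable with $w(v)$ or $w\sqsubset_{W}w(v)$ via the localization $\varphi\left(\max V_{T}(v)\right)\subset \max W\left(w(v)\right)$ drawn from (\ref{wl}) and (\ref{alpha}), and eliminate $w(v)\sqsubset_{W}w$ by applying the Alternative (Theorem \ref{thm:alt}) and deriving, through the inductive uniqueness at lower rank, that an element strictly above $w(v)$ belongs to $M(v)$ (you get $c\in M(v)$, the paper gets $w_{0}\in M(v_{0})$), contradicting the maximal choice (\ref{special}) --- your induction on $r_{V_{T}}(v)$ together with a $\sqsubset_{W}$-maximal meshing $w$ (whose maximality disposes of case (A.2)) is merely a repackaging of the paper's rank-lexicographically least counterexample couple $\left(v_{0},w_{0}\right)$, whose minimality in the second coordinate plays the same role. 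One cosmetic correction: the set of meshing elements above $w(v)$ need not be finite, so your parenthetical justification is off --- the $\sqsubset_{W}$-maximal $w$ exists because a cascade is by definition well-founded for the inverse order.
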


\begin{proof}
We shall see that if $w$ is an element of (\ref{eos}), then $w=w\left(
v\right) ,$ as defined in (\ref{special}). Suppose that on the contrary,
there is a couple $\left( v,w\right) $ such that $\varphi \left( \int
V_{T}\left( v\right) \right) \#W\int \left( w\right) $ and $w\neq w\left(
v\right) .$ Let $\left( v_{0},w_{0}\right) $ be the least (with respect to
the lexicographic order of ranks) such a couple. Consider three cases.

\begin{enumerate}
\item $w\left( v_{0}\right) $ and $w_{0}$ are not $\sqsubset _{W}$%
-comparable,

\item $w_{0}\sqsubset _{W}w\left( v_{0}\right) ,$

\item $w\left( v_{0}\right) \sqsubset _{W}w_{0}.$
\end{enumerate}

Justification.

\begin{enumerate}
\item By (\ref{wl}) and (\ref{alpha}), $\varphi \left( \max V_{T}\left(
v_{0}\right) \right) \subset W\left( w\left( v_{0}\right) \right) $ and $%
W\left( w\left( v_{0}\right) \right) \cap W\left( w_{0}\right) =\varnothing
, $ hence $\varphi \left( \int V_{T}\left( v_{0}\right) \right) $ does not
mesh $\int W\left( w_{0}\right) .$

\item By (\ref{wl}) and (\ref{alpha}), $\varphi \left( \max V_{T}\left(
v_{0}\right) \right) \subset W\left( w\left( v_{0}\right) \right) $ and is $%
\int W\left( w\left( v_{0}\right) \right) $ is disjoint from $\int W\left(
w_{0}\right) .$

\item By Alternative (Theorem \ref{thm:alt}) applied to for $\varphi \left(
\int V_{T}\left( v_{0}\right) \right) $ and $\int W\left( w_{0}\right) ,$
one of the following holds:

\begin{enumerate}
\item $\varphi \left( \int V_{T}\left( v\right) \right) \#\int W\left(
w\right) $ for infinitely many $v\in V_{T}^{+}\left( v_{0}\right) $ and
infinitely many $w\in W^{+}\left( w_{0}\right) ,$ which contradicts the
maximality of $w\left( v_{0}\right) .$ Indeed, if $r\left( v\right) <r\left(
v_{0}\right) $ then $\varphi \left( \int V_{T}\left( v\right) \right) \#\int
W\left( w\right) $ implies $w=w\left( v\right) $, by minimality of $r\left(
v_{0}\right) $. Therefore if $v\in V_{T}^{+}\left( v_{0}\right) $ then%
\begin{equation*}
\varphi \left( \max V_{T}\left( v\right) \right) \subset \max W\left(
w\left( v\right) \right) ,
\end{equation*}%
where $w\left( v\right) \in W^{+}\left( w_{0}\right) ,$ hence $w_{0}\in
M\left( v_{0}\right) $, where $M$ was defined just before (\ref{wl}).

\item $\varphi \left( \int V_{T}\left( v\right) \right) \#\int W\left(
w_{0}\right) $ for infinitely many $v\in V_{T}^{+}\left( v_{0}\right) ,$
which is contradicted analogously to the preceding case.

\item $\varphi \left( \int V_{T}\left( v_{0}\right) \right) \#\int W\left(
w\right) $ for infinitely many $w\in W^{+}\left( w_{0}\right) ,$ in
contradiction with the minimality of $r\left( w_{0}\right) .$
\end{enumerate} \end{enumerate} \end{proof}

\section{Continuous maps between sequential cascades}

\begin{propositionA}
\label{prop:ext}Let $V,W$ be monotone sequential cascades such that $\varphi
\left( \int V\right) \geq \int W$ and for each $v\in V$ there exists a
unique $f\left( v\right) \in W$ such that $f\left( v\right) =\varphi \left(
v\right) $ if $v\in \max V$ and
\begin{equation}
\varphi \left( \int V\left( v\right) \right) \#\int W\left( f\left( v\right)
\right) .  \label{war}
\end{equation}%
Then there is a full subcascade $U$ of $V$ such that $\left. f\right\vert
_{U}$ is continuous.
\end{propositionA}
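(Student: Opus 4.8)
The plan is to translate continuity into a contour-refinement condition at every node, and then to secure that condition on a full subcascade by a transfinite induction on rank that mirrors the construction in Theorem~\ref{thm:sharp}. Since the standard topology of a sequential cascade is recovered from the contours $\int W(w)$ as the traces on maximal elements of the neighborhood filters $\mathcal{N}_W(w)$, continuity of $f|_U$ is equivalent to requiring, at every non-maximal $v\in U$,
\[
\varphi\bigl(\int U(v)\bigr)\geq \int W\bigl(f(v)\bigr),
\]
which is the trace on $\max W$ of the relation $f(\mathcal{N}_U(v))\geq \mathcal{N}_W(f(v))$; at maximal (isolated) points continuity is automatic. By hypothesis $f(v)$ is the \emph{unique} $w\in W$ with $\varphi(\int V(v))\#\int W(w)$, so the entire task is to pass to a full subcascade on which, at each node, the given \emph{meshing} is promoted to this \emph{refinement}. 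At the root nothing is needed: $\varphi(\int V)\geq\int W$ is assumed, and uniqueness forces $f(\varnothing_V)=\varnothing_W$, so the refinement already holds there.

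Next I would build $U$ as $V_{T_{r(V)}}$ for a decreasing family $\{T_\alpha:\alpha\le r(V)\}\subset\max V$ constructed exactly in the style of Theorem~\ref{thm:sharp}, carrying level-by-level consistency conditions analogous to (\ref{S1})--(\ref{S3}). At a node $v$ of rank $\alpha$ with $w:=f(v)$, I expand $\int W(w)=\limfunc{Li}\nolimits_{(m)}\int W(w^{m})$ over the immediate $W$-successors $w^{m}$ of $w$ and $\varphi(\int V(v))=\limfunc{Li}\nolimits_{(n)}\varphi(\int V(v_n))$ over the immediate $V$-successors $v_n$, and apply the Alternative (Theorem~\ref{thm:alt}) to these two iterated contours. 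The uniqueness of the meshing partner makes the ``sideways'' alternatives inert at this scale: each $\varphi(\int V(v_n))$ meshes $\int W(f(v_n))$ and nothing else, which pins $f(v_n)$ to a point of $W(w)$ for infinitely many $n$. Retaining precisely those $v_n$ (and relabelling them as an $\omega$-sequence, which monotonicity of $W$ permits while keeping $U$ a monotone sequential cascade) supplies the meshing data needed to recurse into each retained $V(v_n)$, while the conditions (\ref{S1})--(\ref{S3}) guarantee that later pruning strictly below $v$ does not disturb $\int U(v)$, so the refinement secured at $v$ survives. Fullness of $U$ is then immediate, since infinitely many successors are kept at every retained non-maximal node, and well-foundedness of the rank order terminates the induction along every branch.

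The main obstacle is exactly this promotion of meshing to refinement, and its compatibility across levels. The difficulty is that meshing is \emph{not} monotone under refinement: sharpening a contour by pruning deep in the tree can destroy a meshing relation already arranged higher up, so one cannot simply recurse downwards and must instead run the simultaneous transfinite bookkeeping of (\ref{S1})--(\ref{S3}). Moreover, a priori Theorem~\ref{thm:alt} yields only a transversal set or infinitely many meshing terms, and the images $f(v_n)$ could scatter across pairwise $\sqsubset_W$-incomparable branches or fail to accumulate at $w$; extracting a single infinite index set for which $\limfunc{Li}\nolimits_{(n)}\varphi(\int V(v_n))$ dominates $\int W(w)$ \emph{uniformly} is a genuine $\limfunc{Li}$-inversion. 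This is precisely where I expect the weight of the argument to lie, and it rests on the uniqueness of $f(v_n)$ (to keep each $f(v_n)\in W(w)$ and suppress the sideways cases) together with the monotonicity of both $V$ and $W$ (the former to keep ranks strictly decreasing so the induction is well-founded, the latter to reindex the survivors and to compare $\limfunc{Li}\nolimits_{(n)}\int W(f(v_n))$ with $\int W(w)$).
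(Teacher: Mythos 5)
Your proposal has a genuine gap: it is a plan, not a proof, and the plan defers exactly the step on which everything depends. You reduce the proposition to promoting the meshing (\ref{war}) to the refinement $\varphi\left(\int U\left(v\right)\right)\geq\int W\left(f\left(v\right)\right)$ at every node, to be secured by a transfinite family $\left\{T_{\alpha}\right\}$ ``in the style of Theorem~\ref{thm:sharp}'' with invariants ``analogous to (\ref{S1})--(\ref{S3})''. But you never formulate those invariants for this new situation, never verify them, and you explicitly flag the decisive step --- extracting a single infinite index set along which $\limfunc{Li}\nolimits_{\left(n\right)}\varphi\left(\int V\left(v_{n}\right)\right)$ dominates $\int W\left(w\right)$, your ``genuine $\limfunc{Li}$-inversion'' --- as ``where I expect the weight of the argument to lie''. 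A proof cannot stop where its weight begins. Moreover, your opening equivalence (continuity of $f|_{U}$ iff nodewise trace refinement on $\max W$) is asserted as a purely topological fact, but it is not one: points of $\max W$ are isolated in the standard topology, so there are open neighborhoods $B$ of $f\left(v\right)$ that contain all of $\max W\left(w\right)$ while omitting the interior point $w$ itself; refinement of filters on $\max W$ therefore cannot by itself force the images $f\left(v_{n}\right)$ of interior nodes into $B$. Ruling out such escape requires a disjointness argument using the meshing hypothesis (\ref{war}) at the successors (for instance, playing $\max W\left(w\right)\in\int W\left(w\right)$ against a member of $\int W\left(f\left(v\right)\right)$ avoiding it), which your sketch never supplies.

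The paper's proof shows that this entire apparatus is unnecessary, which is why the gap matters: the difficulty you locate at the center is not where the actual argument lives. The paper takes a $\prec$-least pair $\left(r\left(V\right),r\left(W\right)\right)$ admitting a counterexample and applies the Alternative (Theorem~\ref{thm:alt}) \emph{once, at the root}, to $\varphi\left(\int V\right)=\limfunc{Li}\nolimits_{\left(n\right)}\varphi\left(\int V\left(v_{n}\right)\right)$ and $\int W=\limfunc{Li}\nolimits_{\left(k\right)}\int W\left(w_{k}\right)$. The case that infinitely many $w_{k}$ mesh $\varphi\left(\int V\right)$ is killed outright by the uniqueness in the hypothesis (the value $f\left(\varnothing_{V}\right)$ could not be unique); in the two surviving cases, $f$ maps an infinite set $U_{0}\subset V^{+}\left(\varnothing_{V}\right)$ either injectively into $W^{+}\left(\varnothing_{W}\right)$ (transversal case) or constantly to $\varnothing_{W}$, the corresponding pairs of ranks drop in the $\prec$-order, the inductive hypothesis yields full subcascades $U\left(v\right)$ of $V\left(v\right)$ on which $f$ is continuous, and $U:=\left\{\varnothing_{V}\right\}\cup\bigcup_{v\in U_{0}}U\left(v\right)$ works, continuity at the root being immediate from the structure of $f$ on $U_{0}$. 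No nodewise promotion of meshing to refinement, no $T_{\alpha}$ bookkeeping, and no contour inversion occurs here: the transfinite pruning you propose to replicate belongs to Theorem~\ref{thm:sharp}, which in the paper is applied \emph{before} Proposition~\ref{prop:ext} (in the proof of Theorem~\ref{thm:ext}) precisely to manufacture the uniqueness that Proposition~\ref{prop:ext} then exploits by this short induction.
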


\begin{proof}
If $r\left( V\right) =1$ then $r\left( W\right) =1.$ Then it is enough to
set $U:=V_{\max W}.$ We use the induction with respect to the lexicographic
order (initializing for each $\left( \alpha ,1\right) .$ If $r\left(
W\right) =1,$ then $f\left( v\right) =\varnothing _{W}$ for each $v\in
V\setminus \max V$.

Let $\left( \alpha ,\beta \right) $ be the least couple, for which the claim
does not hold, and let $V$ and $W$ witness this and let $f:V\rightarrow W$
fulfill the assumptions. Note that $\beta >1.$ By Alternative theorem, one
of the following cases holds.

\begin{enumerate}
\item There exists an infinite subset $U_{0}$ of $V^{+}\left( \varnothing
_{V}\right) $ such that $f\left( U_{0}\right) $ is an infinite subset of $%
W^{+}\left( \varnothing _{W}\right) $. By inductive assumption, for each $%
v\in U_{0}$ there exists a full subcascade $U\left( v\right) $ of $V\left(
v\right) $ such that $\left. f\right\vert _{U\left( v\right) }$ is
continuous. Then, for $U:=\bigcup_{v\in U_{0}}U_{v}\cup \left\{ \varnothing
_{V}\right\} $ the restriction $\left. f\right\vert _{U}$ is continuous.

\item There exists an infinite subset $U_{0}$ of $V^{+}\left( \varnothing
_{V}\right) $ such that $f\left( U_{0}\right) =\varnothing _{W}.$ As above.

\item There exists an infinite subset $A$ of $W^{+}\left( \varnothing
_{W}\right) $ such that $f\left( \varnothing _{V}\right) =a$ for each $a\in
A,$ which yields a contradiction.
\end{enumerate}
\end{proof}

The following theorem considerably generalizes \cite[Theorem 3.3]{DSW},
where the function $\varphi $ was supposed to be injective. The proof of
\cite[Theorem 3.3]{DSW} was not correct.

\begin{theoremA}
\label{thm:ext} If $\varphi :\limfunc{ext}V\rightarrow \limfunc{ext}W$ is
continuous, then there exists a full subcascade $U$ of $V$ and a continuous
map $f:U\rightarrow W$ such that $\left. f\right\vert _{\limfunc{ext}%
U}=\left. \varphi \right\vert _{\limfunc{ext}U}$.
\end{theoremA}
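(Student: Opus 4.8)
The plan is to reduce Theorem~\ref{thm:ext} to Proposition~\ref{prop:ext} by supplying the extension $f$ of $\varphi$ from the extremal points to all of a suitable subcascade, and to verify that the hypotheses of the proposition are met. The starting observation is that continuity of $\varphi:\limfunc{ext}V\rightarrow\limfunc{ext}W$ with respect to the Arens topologies means precisely that $\varphi(\int V)\geq\int W$: indeed, $\int V$ is the neighborhood filter of $\varnothing_V$ traced on $\max V$ (and likewise for $W$), so continuity at the single non-isolated point $\varnothing_V$, together with the normalization $\varphi(\varnothing_V)=\varnothing_W$, says exactly that the image filter refines $\int W$. Thus the meshing hypothesis of Theorem~\ref{thm:sharp} holds in its strong form $\varphi(\int V)\geq\int W$.

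Next I would invoke Theorem~\ref{thm:sharp}: it produces an infinite set $T$ such that, passing to the subcascade $V_T$, for each $v\in V_T$ there is a \emph{unique} $w(v)\in W$ with $\varphi(\int V_T(v))\#\int W(w(v))$. Replacing $V$ by $V_T$ (a full, closed-downwards subcascade, by Lemma~\ref{meshS} and the construction in Theorem~\ref{thm:sharp}), I define $f:V\rightarrow W$ by $f(v):=w(v)$ for $v\notin\max V$ and $f(v):=\varphi(v)$ for $v\in\max V$. The uniqueness clause of Theorem~\ref{thm:sharp}, established in the Lemma following it, is exactly what guarantees that $f$ is well defined and single-valued; and the condition $\varphi(\int V_T(v))\#\int W(f(v))$ is precisely the meshing hypothesis (\ref{war}) required by Proposition~\ref{prop:ext}. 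The only point needing care is that $f$ agrees with $\varphi$ on $\max V$ and that $w(v)=\varphi(v)$ is forced there, which follows because for $v\in\max V$ the filter $\int V_T(v)$ is the principal ultrafilter at $v$, so meshing with $\int W(w)$ forces $\varphi(v)\in W(w)$, and maximality singles out $w=\varphi(v)$.

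Having arranged the hypotheses, I would apply Proposition~\ref{prop:ext} to $V_T$, $W$, $\varphi$ and this $f$, obtaining a full subcascade $U$ of $V_T$ on which $\left.f\right\vert_U$ is continuous. Since $V_T$ is full and closed downwards in $V$ and $U$ is full in $V_T$, $U$ is a full subcascade of $V$. Finally, because $f$ was defined to equal $\varphi$ on extremal points, we have $\left.f\right\vert_{\limfunc{ext}U}=\left.\varphi\right\vert_{\limfunc{ext}U}$, which is the desired conclusion.

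The main obstacle I anticipate is not any single hard estimate but the bookkeeping of restrictions: one must check that the successive passages $V\rightsquigarrow V_T\rightsquigarrow U$ preserve fullness and closed-downwardness, so that the final $U$ is legitimately a full subcascade of the original $V$, and that the extremal points surviving in $U$ carry the identity $f=\varphi$. The substantive mathematical content—the existence of the uniquely-determined selector $w(v)$ and the continuity of its extension—is already packaged in Theorem~\ref{thm:sharp} and Proposition~\ref{prop:ext}; the work of the present proof is to recognize that continuity of $\varphi$ is equivalent to $\varphi(\int V)\geq\int W$ and to thread these two results together correctly.
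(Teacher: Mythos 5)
Your proposal is correct and follows essentially the same route as the paper, whose (very terse) proof consists precisely of the three steps you identify: continuity of $\varphi$ yields $\varphi\left( \int V\right) \geq \int W$, Theorem~\ref{thm:sharp} supplies the subcascade with the unique selector $w\left( v\right) $, and Proposition~\ref{prop:ext} is then applied to that subcascade and $W$. Your filling in of the bookkeeping (well-definedness of $f$, agreement with $\varphi$ on extremal points, preservation of fullness under the passage $V\rightsquigarrow V_{T}\rightsquigarrow U$) is a faithful expansion of what the paper leaves implicit.
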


\begin{proof}
If $\varphi :\limfunc{ext}V\rightarrow \limfunc{ext}W$ is continuous, then $%
\varphi \left( \int V\right) \geq \int W.$ Use Theorem \ref{thm:sharp} and
apply Proposition \ref{prop:ext} to $V_{S}$ and $W.$
\end{proof}

On the other hand, it is known that \cite[Proposition 3.2]{DSW}

\begin{propositionA}
\label{prop:rank}If $U$ and $W$ are monotone sequential cascades and $%
f:U\rightarrow W$ is a continuous map such that $f\left( \max U\right)
\subset \max W$ and $f\left( \varnothing _{U}\right) =\varnothing _{W}$, then%
\begin{equation*}
r_{U}\left( u\right) \geq r_{W}\left( f\left( u\right) \right)
\end{equation*}%
for each $u\in U.$
\end{propositionA}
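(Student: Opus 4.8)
The plan is to prove the inequality for every $u\in U$ by transfinite induction on $\alpha:=r_U(u)$, concentrating the whole difficulty into a single lemma about the behaviour of rank under convergent sequences in the target cascade. For $\alpha=0$ the point $u$ is maximal, so $f(u)\in\max W$ by hypothesis and $r_W(f(u))=0$. For $\alpha>0$ the point $u$ is not maximal, so (as $U$ is sequential) the set $U^{+}(u)=\{u_n:n<\omega\}$ of immediate successors is countably infinite, each $u_n$ satisfies $r_U(u_n)<\alpha$, and the cofinite filter on $U^{+}(u)$ converges to $u$ in the standard topology. Since $f$ is continuous, the sequence $(f(u_n))_n$ converges to $f(u)$ in $W$. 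Granting the lemma below and the inductive hypothesis $r_W(f(u_n))\le r_U(u_n)$, we obtain
\[
r_W(f(u))\le\sup_n\bigl(r_W(f(u_n))+1\bigr)\le\sup_n\bigl(r_U(u_n)+1\bigr)=r_U(u),
\]
which is exactly the claim.

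The key lemma is: \emph{if $W$ is a monotone sequential cascade and a sequence $(p_n)_n$ converges to $w$ in the standard topology, then $r_W(w)\le\sup_n\bigl(r_W(p_n)+1\bigr)$.} To prove it, recall that the standard topology is the finest topology compatible with $\sigma$, so a set $O$ is open precisely when $W^{+}(v)\setminus O$ is finite for every non-maximal $v\in O$; in particular every subtree $W(v)$ is open, and for each immediate successor $w_m$ of $w$ the set $W(w)\setminus W(w_m)$ is an open neighbourhood of $w$. Put $\beta:=\sup_n\bigl(r_W(p_n)+1\bigr)$ and suppose, for contradiction, that $r_W(w)>\beta$, i.e. some immediate successor of $w$ has rank $\ge\beta$. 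Because $W$ is monotone, the ranks of $W^{+}(w)=\{w_m\}$ are non-decreasing, so all but finitely many $w_m$ have $r_W(w_m)\ge\beta$.

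The core construction then produces an open neighbourhood of $w$ missing every $p_n$, contradicting convergence. Since $W(w)$ is open we may assume $w\sqsubseteq p_n$ for all $n$, and since each $W(w)\setminus W(w_m)$ is a neighbourhood of $w$, each subtree $W(w_m)$ contains only finitely many terms of the sequence. For an index $m$ with $r_W(w_m)\ge\beta$ and a term $p_n\in W(w_m)$ we have $r_W(p_n)+1\le\beta\le r_W(w_m)$, hence $p_n\neq w_m$ and $p_n$ lies \emph{strictly} below $w_m$; consequently, removing from $W(w_m)$ the finitely many subtrees rooted at these terms yields a basic open neighbourhood $O_m$ of $w_m$ avoiding them. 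Discarding the finitely many successors $w_m$ with $r_W(w_m)<\beta$ and taking $O:=\{w\}\cup\bigcup\{O_m:r_W(w_m)\ge\beta\}$ produces an open neighbourhood of $w$ containing no $p_n$, the desired contradiction. Hence every immediate successor has rank below $\beta$ and $r_W(w)\le\beta$.

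Two points deserve attention, and they are where I expect the real work to lie. First, \emph{monotonicity of $W$ is essential}: without it $w$ could have infinitely many maximal immediate successors alongside an internal one, and those maximal successors would form a sequence of rank-$0$ points converging to a point of rank $\ge 2$, violating the lemma. Second, the \emph{limit-ordinal case is the delicate one}, and it is precisely to handle it that the bound is phrased as $\sup_n(r_W(p_n)+1)$ rather than $\sup_n r_W(p_n)+1$: routing the contradiction through the inequality $r_W(p_n)+1\le\beta\le r_W(w_m)$, which forces $p_n$ strictly below $w_m$, avoids the off-by-one error that would otherwise appear at limit ranks. The verification of the open-set description of the standard topology and of the pruning construction are the only genuinely technical steps; note also that the argument uses only that $U$ is a sequential cascade, that $W$ is monotone, and that $f$ is continuous with $f(\max U)\subset\max W$.
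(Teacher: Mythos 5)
Your proof is correct, and there is a point about the comparison worth making at the outset: this paper contains no proof of the proposition at all — it is quoted as known, with a pointer to \cite[Proposition 3.2]{DSW} — so the only thing to compare against is the filter-theoretic framework in which the statement sits there and in the rest of this paper. Your route is self-contained and purely topological: you concentrate everything in the lemma that, in a monotone sequential cascade, $p_n\rightarrow w$ in the standard topology forces $r_W(w)\leq \sup_n\left( r_W(p_n)+1\right)$, and then run the evident induction on $r_U(u)$, using $r_U(u)=\sup_n\left( r_U(u_n)+1\right)$; the paper's ambient machinery would instead phrase such facts through contours and the meshing relation (note that Corollary \ref{cor:rank} cannot be used here, since the paper derives it \emph{from} this proposition). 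I checked the lemma's proof and it is sound: the open-set description of the standard topology ($O$ open iff $W^{+}(v)\setminus O$ is finite for every non-maximal $v\in O$) is the correct description of the finest topology compatible with $\sigma$; each $W(w_m)$ can contain only finitely many terms because $W(w)\setminus W(w_m)$ is open and contains $w$; the inequality $r_W(p_n)+1\leq \beta \leq r_W(w_m)$ does force $p_n\neq w_m$, hence $w_m\sqsubset p_n$, so the pruned sets $O_m$ and $O=\{w\}\cup\bigcup O_m$ are open; and monotonicity of $W$ is used exactly where it must be, to guarantee that only finitely many $w_m$ of rank $<\beta$ are discarded at the root of the construction — your counterexample remark (infinitely many maximal immediate successors interleaved with ones of positive rank, whose maximal successors then converge to a point of rank $2$) correctly shows both the lemma and the proposition fail without it. Two cosmetic points: ``strictly below $w_m$'' should read ``a proper successor of $w_m$'' in the paper's order, where $W(w_m)=\{x:w_m\sqsubseteq x\}$; and after passing to the tail with $w\sqsubseteq p_n$, your $O$ only avoids that tail, which is all that is needed to contradict convergence, but it is worth saying so explicitly. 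A dividend of your argument worth recording is that it never uses $f(\varnothing_U)=\varnothing_W$, nor monotonicity of $U$: sequentiality of $U$, monotonicity of $W$, continuity, and $f(\max U)\subset \max W$ suffice, so you in fact prove slightly more than the stated proposition.
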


As the rank is preserved by full subcascades, the two preceding fact imply
\cite[Lemma 8.4]{FrRaj}, that is, if $\alpha >\beta $ then there is no prime
map from a space of subsequential order $\beta $ to a space of subsequential
order $\alpha $ (see \cite[Corollary 3.4]{DSW} for terminology).

Simple examples like \cite[Example 3.1]{DSW}, show that in general a
continuous map $\varphi $ from Theorem \ref{thm:ext} has no continuous
extension to the whole cascade $V.$ By the way, Proposition \ref{prop:ult}
also implies the fact above.

Theorem \ref{thm:ext} has also applications in set theory and logic for some
classifications of ultrafilters on natural numbers (see \cite{StaroBeta},%
\cite{StaroP}).

\begin{corollaryA}
If $V,W$ are monotone sequential cascades for which $\int V=\int W,$ then $%
r\left( V\right) =r\left( W\right) .$
\end{corollaryA}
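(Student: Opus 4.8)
The plan is to reduce the statement to the rank-monotonicity of continuous prime maps, using exactly the two ingredients assembled just above (Theorem \ref{thm:ext} and Proposition \ref{prop:rank}), and then to symmetrize. First I would observe that the hypothesis $\int V=\int W$ forces the two contours to be filters on a common set $X:=\max V=\max W$: the largest element of a filter is unique, and $\max V\in\int V$ while $\max W\in\int W$. Consequently the two Arens topologies on $\limfunc{ext}V=\{\varnothing_V\}\cup X$ and $\limfunc{ext}W=\{\varnothing_W\}\cup X$ have the same isolated points and the same neighbourhood filter $\int V=\int W$ at their respective non-isolated points.

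Next I would exhibit the map $\varphi:\limfunc{ext}V\rightarrow\limfunc{ext}W$ that is the identity on $X$ and sends $\varnothing_V$ to $\varnothing_W$. This $\varphi$ is continuous: away from $\varnothing_V$ every point is isolated, while the preimage of a basic neighbourhood $\{\varnothing_W\}\cup N$ of $\varnothing_W$ (with $N\in\int W$) is $\{\varnothing_V\}\cup N$, which is a neighbourhood of $\varnothing_V$ because $N\in\int V$. Applying Theorem \ref{thm:ext} to $\varphi$ yields a full subcascade $U$ of $V$ and a continuous map $f:U\rightarrow W$ with $\left.f\right\vert_{\limfunc{ext}U}=\left.\varphi\right\vert_{\limfunc{ext}U}$. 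Since $U$ is a full, closed-downwards subcascade of $V$, its least element is $\varnothing_U=\varnothing_V$, so $f(\varnothing_U)=\varphi(\varnothing_V)=\varnothing_W$; and for $u\in\max U\subset X$ we have $f(u)=\varphi(u)=u\in X=\max W$, so $f(\max U)\subset\max W$.

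Now the hypotheses of Proposition \ref{prop:rank} are met, giving $r(U)=r_U(\varnothing_U)\geq r_W(f(\varnothing_U))=r_W(\varnothing_W)=r(W)$. Because rank is preserved under passage to full subcascades, $r(U)=r(V)$, and therefore $r(V)\geq r(W)$. Finally I would run the identical argument with the roles of $V$ and $W$ exchanged (the set-up is completely symmetric in $V$ and $W$, since $\int V=\int W$) to obtain $r(W)\geq r(V)$, whence $r(V)=r(W)$.

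I expect the only delicate points to be the two bookkeeping verifications needed to invoke Proposition \ref{prop:rank}: that $\varnothing_U=\varnothing_V$, so that $f$ fixes the roots correctly, and that $f$ indeed carries $\max U$ into $\max W$. Both are immediate from the fullness and downward-closedness of $U$ together with $\left.f\right\vert_{\limfunc{ext}U}=\left.\varphi\right\vert_{\limfunc{ext}U}$, so no real obstacle remains once the continuous map $\varphi$ has been exhibited; the substance of the corollary is carried entirely by Theorem \ref{thm:ext} and Proposition \ref{prop:rank}.
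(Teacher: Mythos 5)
Your proof is correct and is essentially the paper's own argument: both hinge on extending the (near-)identity map $\varphi:\limfunc{ext}V\rightarrow \limfunc{ext}W$ via Theorem \ref{thm:ext} to a continuous $f:U\rightarrow W$ on a full subcascade $U$ of $V$, and then applying Proposition \ref{prop:rank} together with the preservation of rank by full subcascades. The only cosmetic differences are that the paper argues by contradiction from $r\left( V\right) <r\left( W\right) $ and, rather than deducing $\max V=\max W$ from literal equality of the filters as you do, restricts to the subcascades $V_{X}$ and $W_{X}$ for some $X\in \int V$; your direct, symmetrized version is equally valid.
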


\begin{proof}
Suppose that $r\left( V\right) <r\left( W\right) $ and $\int V=\int W.$ If $%
X\in \int V$ then there is a full subcscade $U$ of $V_{X}$ such that the map
$\varphi (x):=x$ if $x\in X$ and $\varphi \left( \varnothing _{V}\right)
:=\varnothing _{W}$ coincides with a continuous map from $U$ to $W_{X}.$ By
Proposition \ref{prop:rank}, this leads a contradiction, because $r\left(
U\right) =r\left( V\right) <r\left( W_{X}\right) =r\left( W\right) .$
\end{proof}

This means that the rank of a contour is independent of a particular
monotone sequential cascade.

\begin{corollaryA}
\label{cor:rank}If $\mathcal{F},\mathcal{G}$ are monotone sequential
contours such that $r\left( \mathcal{G}\right) >r\left( \mathcal{F}\right) ,$
then $f\left( \mathcal{F}\right) \ngeqslant \mathcal{G}$ for each map $f$.
\end{corollaryA}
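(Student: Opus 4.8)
The plan is to follow the pattern of the corollary just established (the one asserting $r(V)=r(W)$ when $\int V=\int W$), converting the statement into a rank comparison by means of \thmref{thm:ext} and Proposition~\ref{prop:rank}. Suppose, towards a contradiction, that $f(\mathcal{F})\geq\mathcal{G}$ for some map $f$. Fix monotone sequential cascades $V,W$ with $\mathcal{F}=\int V$ and $\mathcal{G}=\int W$; by the preceding corollary the ranks $r(V)$ and $r(W)$ depend only on the contours, so $r(V)=r(\mathcal{F})<r(\mathcal{G})=r(W)$. The aim is to produce a continuous prime map into $\limfunc{ext}W$ out of a restriction of $V$ that retains the rank $r(V)$, and then to contradict $r(V)<r(W)$.

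First I would arrange that $f$ takes values in $\max W$ on a large set. Since $\max W\in\mathcal{G}$ and $f(\mathcal{F})\geq\mathcal{G}$, the set $X:=f^{-1}(\max W)\cap\max V$ belongs to $\mathcal{F}=\int V$; in particular $X\#\int V$, so by the lemma on $V_{S}$ the family $V_{X}$ is a monotone sequential cascade, and since $X\in\int V$ it has the same rank, $r(V_{X})=r(V)$, exactly as in the preceding corollary. Next I would define $\varphi:\limfunc{ext}V_{X}\rightarrow\limfunc{ext}W$ by $\varphi(x):=f(x)$ for $x\in\max V_{X}\subset X$ and $\varphi(\varnothing_{V_{X}}):=\varnothing_{W}$. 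By the choice of $X$ this maps into $\limfunc{ext}W$, and it is continuous: the Arens topologies are prime, so continuity must be checked only at $\varnothing_{V_{X}}$, where it amounts to $\varphi(\int V_{X})\geq\int W$; this holds because the neighbourhood filter $\int V_{X}$ refines the trace of $\int V=\mathcal{F}$ on $X$ while $f(\mathcal{F})\geq\mathcal{G}=\int W$.

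With $\varphi$ continuous I would invoke \thmref{thm:ext} to obtain a full subcascade $U$ of $V_{X}$ and a continuous $g:U\rightarrow W$ with $\left.g\right\vert_{\limfunc{ext}U}=\left.\varphi\right\vert_{\limfunc{ext}U}$. By construction $g(\max U)\subset\max W$ and $g(\varnothing_{U})=\varnothing_{W}$, so Proposition~\ref{prop:rank} yields $r(U)\geq r(W)$. Finally, since $U$ is full in $V_{X}$ and rank is preserved by full subcascades, $r(U)=r(V_{X})=r(V)=r(\mathcal{F})$, giving $r(\mathcal{F})\geq r(W)=r(\mathcal{G})$, contrary to the hypothesis $r(\mathcal{G})>r(\mathcal{F})$.

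I expect the delicate points to be bookkeeping rather than substance: verifying that the induced prime map $\varphi$ is genuinely continuous, i.e. that $f(\mathcal{F})\geq\mathcal{G}$ translates exactly into convergence at $\varnothing_{V_{X}}$, and confirming the two rank-invariance facts used at the end, namely $r(V_{X})=r(V)$ for $X\in\int V$ and the preservation of rank under the full subcascade $U$ furnished by \thmref{thm:ext}. Both are already used in the corollary preceding this one and follow from the construction of $V_{S}$; the real work is entirely inside \thmref{thm:ext}, so the main obstacle is simply recognizing that the hypothesis $f(\mathcal{F})\geq\mathcal{G}$ is precisely the continuity hypothesis of that theorem once one passes to $V_{X}$.
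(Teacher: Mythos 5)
Your proof is correct and follows what is evidently the paper's intended route: the corollary is stated there without a separate proof, as an immediate consequence of Theorem~\ref{thm:ext} and Proposition~\ref{prop:rank} obtained by the very argument of the preceding corollary (pass to $V_{X}$ for $X:=f^{-1}\left( \max W\right) \cap \max V\in \int V$, extract a full subcascade via Theorem~\ref{thm:ext}, and compare ranks at the roots), which is exactly what you carry out. The auxiliary facts you flag --- $r\left( V_{X}\right) =r\left( V\right) $ for $X\in \int V$, continuity of the induced prime map at $\varnothing _{V_{X}}$, and preservation of rank by full subcascades --- are precisely the ones the paper itself uses, implicitly or explicitly, in the proof of the corollary that precedes this one.
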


A subcascade $U$ of a cascade $V$ is called \emph{eventual} if $\varnothing
_{U}:=\varnothing _{V}$ and $U^{+}\left( v\right) $ is a cofinite subset of $%
V^{+}\left( v\right) $ for each $v\in U.$ Let $V,T$ be monotone sequential
cascades. We say that $T$ is a \emph{locally finite partition }of $V$ (in
symbols, $T\vartriangleright V)$ if there exist an eventual subcascade $U$
of $V$ and a surjection $f:T\rightarrow V$ with finite fibers such that%
\begin{gather}
f^{-1}\left( \varnothing _{V}\right) =\left\{ \varnothing _{T}\right\} ,
\label{s1} \\
f\left( T^{+}\left( f^{-}\left\{ v\right\} \right) \right) =V^{+}\left(
v\right) \text{ if }v\in U\setminus \max V,  \label{s2} \\
f\upharpoonright \max T\text{ is the identity.}  \label{s3}
\end{gather}%
This notion was introduced in \cite{StarosolskiCOU} (\footnote{%
It was formulated slightly differently.}).

\begin{propositionA}
If $T\vartriangleright V,$ then $\int T=\int V.$
\end{propositionA}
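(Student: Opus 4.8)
The plan is to argue by induction on the rank $r\left( V\right) $, reducing everything to a single elementary computation about contours under finite-fiber reindexing. First I record that fact: if $g\colon \omega \rightarrow \omega $ has finite fibers and $\mathcal{F}_{n}=\bigcap\nolimits_{g\left( p\right) =n}\mathcal{H}_{p}$ for almost all $n$, then
\begin{equation*}
\limfunc{Li}\nolimits_{\left( p\right) }\mathcal{H}_{p}=\limfunc{Li}
\nolimits_{\left( n\right) }\bigcap\nolimits_{g\left( p\right) =n}\mathcal{H}
_{p}=\limfunc{Li}\nolimits_{\left( n\right) }\mathcal{F}_{n}.
\end{equation*}
Indeed, unravelling the definition of $\limfunc{Li}$, the middle term equals $\bigcup\nolimits_{j}\bigcap\nolimits_{g\left( p\right) >j}\mathcal{H}_{p}$, and since the fibers of $g$ are finite, the cofinite sets $\left\{ p:g\left( p\right) >j\right\} $ and $\left\{ p:p>k\right\} $ are mutually cofinal, so the two unions coincide.

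For the base case $r\left( V\right) \leq 1$, conditions (\ref{s1})--(\ref{s3}) force $T$ and $V$ to be isomorphic as cascades, whence $\int T=\int V$ directly. So suppose $r\left( V\right) >1$ and that the statement holds for all locally finite partitions of cascades of smaller rank. Since the subcascades $V\left( v\right) $, $v\in V^{+}\left( \varnothing _{V}\right) $, have pairwise disjoint sets of maximal elements, the sequence $\left( \int V\left( v\right) \right) _{v}$ is totally disjoint and, by (\ref{concasc}), $\int V=\limfunc{Li}\nolimits_{\left( V^{+}\left( \varnothing _{V}\right) \right) _{0}}\int V\left( v\right) $; likewise $\left( \int T\left( w\right) \right) _{w}$ is totally disjoint with contour $\int T$. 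By (\ref{s2}) applied to $\varnothing _{V}\in U\setminus \max V$, the map $f$ restricts to a finite-fiber surjection of $T^{+}\left( \varnothing _{T}\right) $ onto $V^{+}\left( \varnothing _{V}\right) $; writing $W_{v}:=\left\{ w\in T^{+}\left( \varnothing _{T}\right) :f\left( w\right) =v\right\} $, the sets $\max T\left( w\right) $, $w\in W_{v}$, partition $\max V\left( v\right) $ for every $v$ lying in the eventual subcascade $U$, that is, for all but finitely many $v$.

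The heart of the argument is to show that, for these $v$,
\begin{equation*}
\int V\left( v\right) =\bigcap\nolimits_{w\in W_{v}}\int T\left( w\right) .
\end{equation*}
I would obtain this in two steps, both by the inductive hypothesis. For each $w\in W_{v}$, the map $f$ exhibits $T\left( w\right) $ as a locally finite partition of the full subcascade $V\left( v\right) \upharpoonright \max T\left( w\right) $ (of rank at most $r\left( V\left( v\right) \right) <r\left( V\right) $), so by induction $\int T\left( w\right) =\int \left( V\left( v\right) \upharpoonright \max T\left( w\right) \right) $. Secondly, the finite family of full subcascades $\left\{ V\left( v\right) \upharpoonright \max T\left( w\right) :w\in W_{v}\right\} $ covers all maximal elements of $V\left( v\right) $, and a further induction on $r\left( V\left( v\right) \right) $, using that a finite intersection of cofinite index sets is cofinite, gives $\int V\left( v\right) =\bigcap\nolimits_{w\in W_{v}}\int \left( V\left( v\right) \upharpoonright \max T\left( w\right) \right) $. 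Combining the two displays yields the claim. Finally, applying the reindexing identity with $g=f\upharpoonright T^{+}\left( \varnothing _{T}\right) $ and $\mathcal{H}_{w}=\int T\left( w\right) $,
\begin{equation*}
\int V=\limfunc{Li}\nolimits_{\left( v\right) }\int V\left( v\right) =
\limfunc{Li}\nolimits_{\left( v\right) }\bigcap\nolimits_{w\in W_{v}}\int
T\left( w\right) =\limfunc{Li}\nolimits_{\left( w\right) }\int T\left(
w\right) =\int T.
\end{equation*}

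The main obstacle, and the place demanding the most care, is the reassembly identity $\int V\left( v\right) =\bigcap\nolimits_{w\in W_{v}}\int \left( V\left( v\right) \upharpoonright \max T\left( w\right) \right) $ together with the verification that each $V\left( v\right) \upharpoonright \max T\left( w\right) $ is a genuine monotone sequential cascade. Both rest on fullness: one must check that (\ref{s2}), propagated through the eventual subcascade $U$, guarantees that every internal node of each restricted subcascade retains infinitely many successors, so that the ranks and the recursive definition of the contour behave as required. Without this, the finite partition of $\max V\left( v\right) $ could destroy the sequential (infinite-branching) structure and the identity would fail.
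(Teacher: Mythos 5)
Your proof is correct and follows essentially the same route as the paper: induction on the rank of $V$, with the displayed computation $\int V=\limfunc{Li}\nolimits_{\left( v\right) }\int V\left( v\right) =\limfunc{Li}\nolimits_{\left( v\right) }\bigcap\nolimits_{f\left( w\right) =v}\int T\left( w\right) =\limfunc{Li}\nolimits_{\left( w\right) }\int T\left( w\right) =\int T$, where the last equality is your finite-fiber reindexing lemma and the middle one is supplied by the inductive hypothesis. You merely make explicit (via the restriction-and-reassembly argument for $\int V\left( v\right) =\bigcap\nolimits_{w\in W_{v}}\int T\left( w\right) $) the steps the paper compresses into its single display, which is a welcome elaboration rather than a different method.
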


\begin{proof}
The claim is obviously true if $r\left( V\right) =1.$ Let $r\left( V\right)
=\alpha >1$ and suppose that the claim is true if $r\left( V\right) =\beta $
for each $\beta <\alpha .$ Then%
\begin{eqnarray*}
\int V &=&\limfunc{Li}\nolimits_{n\in V^{+}\left( \varnothing _{V}\right)
}\int V\left( v_{n}\right) \\
&=&\limfunc{Li}\nolimits_{n\in U^{+}\left( \varnothing _{U}\right) }\left(
\bigcap\nolimits_{f\left( p\right) =n}\int T\left( t_{p}\right) \right) \\
&=&\limfunc{Li}\nolimits_{p\in T^{+}\left( \varnothing _{T}\right) }\int
T\left( t_{p}\right) =\int T,
\end{eqnarray*}%
where in all the cases above, $\limfunc{Li}$ are taken over the
corresponding cofinite filters.
\end{proof}

\begin{theoremA}
If $V,W$ are monotone sequential cascades such that $\int V=\int W,$ then
there exists a monotone sequential cascade $T$ such that $T\vartriangleright
V$ and $T\vartriangleright W.$
\end{theoremA}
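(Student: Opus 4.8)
The plan is to induct on the common rank of $V$ and $W$. By the Corollary preceding this theorem, $\int V=\int W$ forces $r(V)=r(W)=:\alpha$, so the induction is well founded. For $\alpha=1$ both $\int V$ and $\int W$ are the cofinite filters of $\max V$ and $\max W$; their equality means $\max V$ and $\max W$ are almost equal, and the rank-one cascade $T$ on $\max V\cap\max W$ is an eventual subcascade of each, so $T\vartriangleright V$ and $T\vartriangleright W$ trivially.

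For $\alpha>1$ I would decompose both contours over the immediate successors of their roots, writing $\int V=\limfunc{Li}_{v\in V^{+}(\varnothing_{V})}\int V(v)$ and $\int W=\limfunc{Li}_{w\in W^{+}(\varnothing_{W})}\int W(w)$. Since the leaf sets of distinct immediate successors are disjoint, the sequences $(\int V(v))_v$ and $(\int W(w))_w$ are totally disjoint and share the contour $\int V=\int W$, so Theorem~\ref{cor:trans} applies: the relation $\Delta=\{(v,w):\int V(v)\#\int W(w)\}$ is finite-to-finite with cofinite domain and range, $\int V=\limfunc{Li}_{(v,w)\in\Delta}(\int V(v)\vee\int W(w))$, and each $\int V(v)$ equals the finite intersection $\bigcap_{w\in\Delta v}(\int V(v)\vee\int W(w))$ (and symmetrically for each $\int W(w)$). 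I would then build $T$ by declaring its level-one nodes to be indexed by $\Delta$, attaching below the node $(v,w)$ a cascade $T_{(v,w)}$ whose contour realizes the join $\int V(v)\vee\int W(w)$, and defining projections by $\pi_V(v,w)=v$, $\pi_W(v,w)=w$ on the level-one nodes and recursively on the subtrees. Finite-to-finiteness of $\Delta$ gives finite fibers, and the two finite-intersection formulas are exactly what make $\pi_V$ and $\pi_W$ satisfy the surjectivity conditions (s1)--(s3), so that $T\vartriangleright V$ and $T\vartriangleright W$; the contour of $T$ is then $\limfunc{Li}_{(v,w)\in\Delta}\int T_{(v,w)}=\limfunc{Li}_{(v,w)\in\Delta}(\int V(v)\vee\int W(w))=\int V$ by Theorem~\ref{cor:trans} and the Proposition that $T\vartriangleright V$ implies $\int T=\int V$.

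The hard part is the construction of the subtree $T_{(v,w)}$, because $\int V(v)$ and $\int W(w)$ only \emph{mesh}; they are not equal, and a priori their ranks need not coincide, so the naive induction hypothesis does not apply. I would overcome this in two steps. First, applying Theorem~\ref{thm:ext} to the identity in both directions yields full subcascades and continuous maps $f\colon U\to W$ and $g\colon U'\to V$ with $\int V(u)\#\int W(f(u))$ and $\int W(u')\#\int V(g(u'))$; combining the rank inequalities $r(u)\ge r(f(u))$ and $r(u')\ge r(g(u'))$ of Proposition~\ref{prop:rank} with the uniqueness of the meshing partner shows that linked pairs $(v,w)\in\Delta$ must have equal rank, which is $<\alpha$. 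Second, even at equal rank the pair only meshes, so I would restrict to a common leaf set: using the restriction lemma (the one producing the monotone subcascade $V_S$ from $S\#\int V$) with $S=\max W(w)$ and $R=\max V(v)$, I pass to the aligned substructures $(V(v))_S$ and $(W(w))_R$, and then invoke the induction hypothesis \emph{strengthened to meshing contours of equal rank}---proved by the same recursion, with Theorem~\ref{thm:sharp} supplying the node-by-node correspondence---to obtain $T_{(v,w)}$ with $\int T_{(v,w)}=\int V(v)\vee\int W(w)$ together with $T_{(v,w)}\vartriangleright (V(v))_S$ and $T_{(v,w)}\vartriangleright (W(w))_R$. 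Verifying that these locally built pieces patch into a single monotone sequential cascade with the claimed global $\vartriangleright$-relations, and that the finite fibers survive the patching, is the remaining bookkeeping.
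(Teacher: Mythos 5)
Your outer argument is the paper's proof step for step: induction on the common rank (licensed by the preceding corollary), the rank-one base case $T:=V_{S}$ with $S=\max V\cap \max W$, the appeal to Theorem~\ref{cor:trans} to obtain the finite-to-finite relation with cofinite domain and range together with $\int V=\int W=\limfunc{Li}\nolimits_{\left( n,k\right) }\left( \int V\left( v_{n}\right) \vee \int W\left( w_{k}\right) \right) $, the level-one nodes of $T$ indexed by that relation, and the gluing of locally built cascades with two finite-fibered projections. The paper settles the attachment step by applying the inductive assumption to the meshing pairs $\left( V\left( v_{n}\right) ,W\left( w_{k}\right) \right) $ \emph{mutatis mutandis}, that is, the statement actually carried through the induction is the (\ref{s1})--(\ref{s3}) version for pairs whose contours mesh, with the join $\int V\left( v_{n}\right) \vee \int W\left( w_{k}\right) $ in place of a common contour; you correctly identify this as the one nontrivial point.

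However, the bridge you build over that point rests on a false lemma: linked pairs in $\Delta $ need \emph{not} have equal rank. Let $V$ be monotone of rank $3$ with successors $v_{n}$ of rank $2$ and leaf blocks $X_{n,m}:=\max V\left( v_{n,m}\right) $; pick $Y_{0}\subset \bigcup_{m}X_{0,m}$ with $Y_{0}\cap X_{0,m}$ infinite and coinfinite for every $m$; let $W$ have root successors $w_{0}$ of rank $1$ with $\max W\left( w_{0}\right) =Y_{0}$, then $w_{1}$ of rank $2$ built on the blocks $X_{0,m}\setminus Y_{0}$, and $w_{k}$ a copy of $V\left( v_{k-1}\right) $ for $k\geq 2$. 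Then $W$ is monotone with disjoint leaf sets, and since $\limfunc{Li}$ is insensitive to finitely many initial terms, $\int W=\limfunc{Li}\nolimits_{\left( k\geq 2\right) }\int W\left( w_{k}\right) =\limfunc{Li}\nolimits_{\left( n\geq 1\right) }\int V\left( v_{n}\right) =\int V$; yet every cofinite subset of $Y_{0}$ meets every member of $\int V\left( v_{0}\right) $, so $\left( v_{0},w_{0}\right) \in \Delta $ with ranks $2\neq 1$ (consistently with Theorem \ref{cor:trans}: $\Delta $ is still finite-to-finite). So the reduction you sketch via Theorem \ref{thm:ext} and Proposition \ref{prop:rank} cannot work --- those results give only one-sided rank inequalities along two unrelated extension subcascades that you have no way to compose. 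Two further steps are unjustified even for equal-rank pairs: the identity $\int T_{\left( v,w\right) }=\int V\left( v\right) \vee \int W\left( w\right) $ does not follow from $T_{\left( v,w\right) }\vartriangleright \left( V\left( v\right) \right) _{S}$, since the proposition only yields $\int T_{\left( v,w\right) }=\int \left( V\left( v\right) \right) _{S}$ and the equality of $\int \left( V\left( v\right) \right) _{S}$ with the join is itself unproved; and the per-pair partition relations to restricted cascades cannot patch into $T\vartriangleright V$, because when $v$ has several partners $w\in \Delta v$ the leaves of a single $T_{\left( v,w\right) }$ form a proper subset of $\max V\left( v\right) $, so condition (\ref{s3}) (identity on leaves) makes (\ref{s2}) below $v$ satisfiable only \emph{jointly} over all partners --- precisely what the finite-intersection formula $\int V\left( v\right) =\bigcap_{w\in \Delta v}\left( \int V\left( v\right) \vee \int W\left( w\right) \right) $ of Theorem \ref{cor:trans} is for, and what the paper's \emph{mutatis mutandis} inner statement encodes. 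Your ``remaining bookkeeping'' is thus not bookkeeping: the equal-rank hypothesis that was to drive your strengthened recursion is false, and the inner inductive statement has to be reformulated for meshing pairs with joint covering before the construction closes.
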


\begin{proof}
By Corollary \ref{cor:rank}, $r\left( V\right) =r\left( W\right) .$ We
induce on the common rank of $V$ and $W.$ If $r\left( V\right) =r\left(
W\right) =1,$ let $S:=\max V\cap \max W,$ and $T:=V_{S}=W_{S}.$ Then $\int
T=\int V=\int W,$ because $T$ is cofinite in $\max V$ and in $\max W.$

Let $r\left( V\right) =r\left( W\right) =\alpha >1$, and suppose that the
claim is true for each $\beta <\alpha .$ Represent $V^{+}\left( \varnothing
_{V}\right) =\left\{ v_{n}:n<\omega \right\} $ and $W^{+}\left( \varnothing
_{W}\right) =\left\{ w_{k}:k<\omega \right\} .$ By Theorem \ref{cor:trans},
the relation
\begin{equation*}
R:=\left\{ \left( n,k\right) :\int V\left( v_{n}\right) \#\int W\left(
w_{k}\right) \right\}
\end{equation*}%
is finite-to finite, and its domain and range are cofinite. Moreover,%
\begin{equation*}
\int V=\int W=\limfunc{Li}\nolimits_{\left( n,k\right) }\left( \int V\left(
v_{n}\right) \vee \int W\left( w_{k}\right) \right) ,
\end{equation*}%
where $\left( n,k\right) $ stands for the cofinite filter on $R.$ We start
defining a cascade $T,$ and maps $f_{V}:T\rightarrow V$ and $%
f_{W}:T\rightarrow W,$ by letting $T^{+}\left( \varnothing _{T}\right)
:=\left\{ t_{n,k}:\left( n,k\right) \in R\right\} $ and let%
\begin{equation*}
f_{V}:T^{+}\left( \varnothing _{T}\right) \rightarrow V^{+}\left(
\varnothing _{V}\right) \text{ and }f_{W}:T^{+}\left( \varnothing
_{T}\right) \rightarrow W^{+}\left( \varnothing _{W}\right)
\end{equation*}%
be defined by $f_{V}\left( t_{n,k}\right) :=v_{n}$ and $f_{W}\left(
t_{n,k}\right) :=w_{k}.$ By inductive assumption, for each $\left(
n,k\right) \in R,$ there exists a cascade $T_{n,k}$ such that maps $\varphi
_{n,k,V}:T_{n,k}\rightarrow V\left( v_{n}\right) $ and $\varphi
_{n,k,W}:T_{n,k}\rightarrow W\left( w_{k}\right) $ fulfilling the
conditions, \emph{mutatis mutandis,} (\ref{s1})-(\ref{s3}). We accomplish
defining a cascade $T$ for $t\sqsupset _{T}\left( n,k\right) $ by adjoining
to each $(n,k)$ the cascade $T_{n,k},$ and maps $f_{V}:T\rightarrow V$ and $%
f_{W}:T\rightarrow W,$ by gluing $\varphi _{n,k,V}$ to $f_{V}$ and $\varphi
_{n,k,W}$ to $f_{W}$ (see \cite{cascades} for details).
\end{proof}

A full subcascade $U$ of $V,$ to which the map $\varphi $ can be extended in
Theorem \ref{thm:ext}, is not unique in general. However, we cannot choose
\emph{a priori} an ultrafilter $\mathcal{Z}$ finer than $\int V$ so that $%
\max U\in \mathcal{Z}$.

\begin{exampleA}
Consider a monotone sequential cascade $V$ of rank $2.$ Let $V^{+}\left(
\varnothing _{V}\right) =\left\{ v_{n}:n<\omega \right\} $. Denote $%
X_{n}:=\max V\left( v_{n}\right) $ and, for each $0<n<\omega ,$ split $X_{n}$
into $n$ disjoint infinite sets $X_{n}=X_{n,1}\cup X_{n,2}\cup \ldots \cup
X_{n,n.}$ Define a sequential monotone cascade $W$ so that $\left\{
X_{n,k}:0<k\leq n,n<\omega \right\} $ coincide with $\left\{ \max W\left(
w\right) :r_{W}\left( w\right) =1\right\} $, and consider the identity $i:%
\limfunc{ext}V\rightarrow \limfunc{ext}W.$

Let $\mathbb{U}$ the set of $U,$ for which $i$ can be continuously extended
to a map from $U$ to $W.$ By Theorem \ref{thm:ext}, $\mathbb{U}\neq
\varnothing $ and $U\cap \left\{ v_{n}:n<\omega \right\} $ is infinite for
each $U\in \mathbb{U}$.

We shall construct an ultrafilter $\mathcal{Z}$ such that $\mathcal{Z}\geq
\int V=\int W$ and $\max U\notin \mathcal{Z}$ for each $U\in \mathbb{U}$.
Let $\mathcal{A}$ be the set of $A\subset \bigcup_{n<\omega }X_{n}$ such
that $\limfunc{card}\left\{ k:X_{n,k}\setminus A\neq \varnothing \right\}
\leq 1$ for each $n<\omega .$ Then $\mathcal{A}\#\int V,$ because if $%
A_{1},\ldots ,A_{p}\in \mathcal{A}$, then $A_{1}\cap \ldots \cap A_{p}\cap
X_{n}\neq \varnothing $ for $n>p.$ Let $\mathcal{Z}$ be an ultrafilter finer
than $\mathcal{A}\vee \int V.$ If $U\in \mathbb{U},\max U\in \mathcal{Z}$
and $f:U\rightarrow W$ is a continuous extension of $i:\limfunc{ext}%
V\rightarrow \limfunc{ext}W,$ then $\max U\in \mathcal{A}^{\#}$ and $\max
U\cap X_{n}\neq \varnothing $ for infinitely many $n.$ If $1\leq k\leq n$ is
such that $f\left( v_{n}\right) \in X_{n,k}$ then $X_{n}\supset
X_{n,k}\supset f^{-1}\left( \left( V^{+}\left( f\left( v_{n}\right) \right)
\right) _{0}\right) $ is not cofinite in $X_{n}$ for $n>1,$ hence $f$ is not
continuous at $v_{n}.$
\end{exampleA}

This example extends easily to

\begin{propositionA}
\label{prop:ult}For every monotone sequential cascade $V$ of rank greater
than $1,$ there exist a monotone sequential cascade $W$ and an ultrafilter $%
\mathcal{Z}\geq \int V=\int W$ such that if $U$ is a full subcascade of $V,$
for which there exists a continuous extension $f:U\rightarrow W$ of the
identity $i:\max V\rightarrow \max W,$ then $\max U\notin \mathcal{Z}$.
\end{propositionA}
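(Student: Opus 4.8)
The plan is to globalize the construction of the preceding Example. There, a rank-$2$ cascade is handled by splitting each rank-$1$ maximal block into a growing number of infinite pieces and building $W$ so that these pieces become the rank-$1$ blocks of $W$; I would perform exactly this surgery at every rank-$2$ node of $V$ simultaneously. Concretely, enumerate $V^{1}=\{v_{j}:j<\omega\}$ and put $X_{j}:=\max V(v_{j})$ (these blocks are pairwise disjoint). Split each $X_{j}$ into $m_{j}$ disjoint infinite pieces $X_{j,1},\dots ,X_{j,m_{j}}$ with $m_{j}\to\infty$ (say $m_{j}=j$). Define $W$ to be $V$ modified locally at the predecessor $u$ of each $v_{j}$: replace the single rank-$1$ successor $v_{j}$ by the $m_{j}$ rank-$1$ successors whose maximal blocks are the pieces $X_{j,k}$, leaving the rest of the tree untouched, and let $i:\max V\to\max W$ be the identity on points. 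A monotone $\omega$-ordering of successors cannot keep rank $1$ infinitely often and still climb to a rank $\geq 3$, so only rank-$2$ nodes carry infinitely many rank-$1$ successors; hence at every node of rank $>2$ the surgery alters only finitely many successors and the contour over the cofinite filter is unchanged, while at a rank-$2$ node the computation of the Example applies verbatim to give $\int W(u)=\int V(u)$. Pushing this equality up the tree, $W$ is a monotone sequential cascade with $\int W=\int V$.

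Next I would build the ultrafilter. Let $\mathcal{A}:=\{A\subset\max V:\forall j\ \limfunc{card}\{k:X_{j,k}\setminus A\neq\varnothing\}\leq 1\}$ and choose $\mathcal{Z}\geq\mathcal{A}\vee\int V$. To justify $\mathcal{A}\#\int V$, observe that for $A_{1},\dots ,A_{p}\in\mathcal{A}$ the intersection omits at most $p$ pieces per node, hence contains an entire piece of every block with $m_{j}>p$. The crux is that every $F\in\int V$ is cofinite in $X_{j}$ for infinitely many $j$, which I would prove by induction on $r(V)$: at a rank-$1$ node the local contour is the cofinite filter of its block, and $\limfunc{Li}$ over the cofinite filter propagates the ``cofinite-in-the-block'' property into infinitely many of the rank-$1$ subtrees under the root, so that $\{v\in V^{1}:F\supseteq^{\ast}\max V(v)\}$ is infinite. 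Choosing such a $j$ with $m_{j}>p$ forces $F$ to meet the surviving piece, proving the meshing. The same induction gives $\bigcup_{v\in V^{1}}\max V(v)\in\int V$, and since a single block is never in $\int V$, also $\bigcup_{j\geq N}X_{j}\in\int V$ for each $N$.

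Finally I would derive the contradiction. Suppose $U$ is a full subcascade and $f:U\to W$ a continuous extension of $i$ with $\max U\in\mathcal{Z}$. Then $\max U\in\mathcal{A}^{\#}$, and since $\bigcup_{j\geq N}X_{j}\in\int V\subset\mathcal{Z}$ cannot be disjoint from $\max U\in\mathcal{Z}$, we get $\max U\cap X_{j}\neq\varnothing$ for infinitely many $j$. For such $j$ the successors of $v_{j}$ are maximal, so fullness forces the all-or-nothing alternative $\max U\supseteq X_{j}$. Pick one with $m_{j}\geq 2$. In $W$ the block $X_{j}$ splits into at least two infinite rank-$1$ blocks, so the convergent family $\max U(v_{j})=X_{j}$, which must map by the identity into the trace of a neighbourhood of $f(v_{j})$, is neither almost contained in any single block $\max W(w)$ nor a member of the rank-$\geq 2$ contour $\int W(u)$ of the common ancestor $u$ (a finite union of rank-$1$ blocks is not in it). Hence there is no admissible value for $f(v_{j})$ and $f$ is discontinuous at $v_{j}$, a contradiction; therefore $\max U\notin\mathcal{Z}$.

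I expect the main obstacle to be the meshing step $\mathcal{A}\#\int V$: controlling how an arbitrary member of the iterated contour $\int V$ distributes over the scattered rank-$1$ blocks, and why it must be cofinite in infinitely many of them, is precisely where the unbounded growth of the piece-counts $m_{j}$ is indispensable, since uniform splitting into two pieces already fails to mesh (one may take the two ``first halves'' and ``second halves'' as disjoint members of $\mathcal{A}$).
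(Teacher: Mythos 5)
Your construction of $W$ --- splitting every rank-one block $X_{j}=\max V\left( v_{j}\right) $ into $m_{j}\rightarrow \infty $ infinite pieces, checking that at nodes of rank other than $2$ the surgery alters only finitely many successors so that $\int W=\int V$, and the meshing $\mathcal{A}\#\int V$ via the induction that every $F\in \int V$ is cofinite in $X_{j}$ for infinitely many $j$ --- is correct and is exactly the globalization the paper intends by ``this example extends easily.'' The proof breaks at the endgame, where you claim that fullness forces the all-or-nothing alternative $\max U\supseteq X_{j}$. This misreads ``full subcascade'': the all-children condition in the footnote concerns subsets of the sequential tree $\Sigma $ (used to represent cascades inside $\Sigma $), whereas a full subcascade of $V$ in this paper is one of the form $V_{S}$ for $S\subset \max V$, in which each non-maximal node retains an \emph{infinite but in general proper} subset of its successors; ``full'' says that the branches run up to $\max V$ (see the footnote in the proof of Theorem \ref{thm:sharp}). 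Under your reading the proposition would be trivial --- any ultrafilter $\mathcal{Z}\geq \int V$ would do, and indeed your contradiction never actually uses $\max U\in \mathcal{A}^{\#}$ --- and, worse, your argument would show that \emph{no} full subcascade meeting infinitely many blocks admits a continuous extension, i.e.\ $\mathbb{U}=\varnothing $, contradicting Theorem \ref{thm:ext}, which the paper's Example invokes to get $\mathbb{U}\neq \varnothing $. Concretely, the subcascade $U$ with $U^{+}\left( v_{j}\right) =X_{j,k_{j}}$ (one piece per block) is full, and on it $f\left( v_{j}\right) :=$ the rank-one node of $W$ over $X_{j,k_{j}}$ is continuous at every $v_{j}$.

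The missing idea is precisely the one for which $\mathcal{A}$ was built. If $f:U\rightarrow W$ is a continuous extension and $v_{j}\in U$, then the identity image of the cofinite filter of $U^{+}\left( v_{j}\right) $ must converge in $W$; convergence to a maximal point is impossible (the filter is free), and convergence to a node of rank $\geq 2$ is impossible because a neighborhood of such a node may omit an entire piece $X_{j,k}$; hence $f\left( v_{j}\right) $ is a rank-one node of $W$ and $\max U\cap X_{j}=U^{+}\left( v_{j}\right) \subseteq ^{\ast }X_{j,k_{j}}$ for a \emph{single} piece $k_{j}$. Now put $A:=\bigcup\nolimits_{j}\left( X_{j}\setminus X_{j,k_{j}}\right) \in \mathcal{A}\subset \mathcal{Z}$ (choosing $k_{j}$ arbitrarily when $v_{j}\notin U$). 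If $\max U\in \mathcal{Z}$, then $D:=\max U\cap A\in \mathcal{Z}$; but $D\cap X_{j}$ is finite for every $j$, so $F:=\bigcup\nolimits_{j}\left( X_{j}\setminus D\right) $ is cofinite in every block, hence $F\in \int V\subset \mathcal{Z}$ (an easy induction on rank, dual to the one you proved), while $F\cap D=\varnothing $ --- a contradiction, whence $\max U\notin \mathcal{Z}$. So the crux is not the meshing step you flagged as the main obstacle, but this use of the grill of $\mathcal{A}$ against the one-piece-per-block trace that continuity actually produces.
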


Theorem \ref{thm:ext} can be interpreted in terms of subsequential spaces. A
topological space is called $\emph{prime}$ if at most one point is not
isolated. A map $\varphi $ from a prime space $X$ to a prime space is called
\emph{non-trivial} if it maps isolated points into isolated points, and the
non-isolated point of $X$ onto the non-isolated point of $Y.$ A subset $V$
of a prime space $X$ with a non-isolated point $o_{X}$ is called \emph{full}
if $o_{X}\in \limfunc{cl}\nolimits_{X}V\setminus V.$

\begin{propositionA}
Let $X$ and $Y$ be prime subsequential topological spaces and $\varphi
:X\rightarrow Y$ is a continuous non-trivial map. Then there exist
sequential spaces $\widetilde{V}$ and $\widetilde{Y}$, a full subset $V$ of $%
X$ and a continuous map $f:\widetilde{V}\rightarrow \widetilde{Y}$ such that
$V$ is a dense subset of $\widetilde{V},Y$ is a dense subset of $\widetilde{Y%
}$ and $\left. f\right\vert _{V}=\left. \varphi \right\vert _{V}.$
\end{propositionA}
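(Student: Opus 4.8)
The plan is to reduce the statement to the cascade extension theorem (Theorem \ref{thm:ext}) by encoding the prime subsequential spaces $X,Y$ and the map $\varphi $ in terms of monotone sequential cascades, applying Theorem \ref{thm:ext} at the cascade level, and then transporting the result back. Throughout, $o_{X}$ and $o_{Y}$ denote the unique non-isolated points, and $\mathcal{N}_{X}(o_{X}),\mathcal{N}_{Y}(o_{Y})$ their neighbourhood filters.

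First I would represent the source. Since $o_{X}$ is non-isolated and $X$ is subsequential, the characterisation recalled in Section \ref{intr} furnishes a sequential contour on $\operatorname{Iso}(X)$ converging to $o_{X}$; passing to a monotone (equivalently, asymptotically monotone) subcontour gives a monotone sequential cascade $V_{c}$ with $\max V_{c}\subseteq \operatorname{Iso}(X)$ and $\int V_{c}\geq \mathcal{N}_{X}(o_{X})$. Setting $V:=\max V_{c}$, the set $V$ is full in $X$ (we have $o_{X}\in \limfunc{cl}\nolimits_{X}V\setminus V$) and the map $\limfunc{ext}V_{c}\to X$ sending $\varnothing _{V_{c}}\mapsto o_{X}$ and each leaf to itself is continuous. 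As $\varphi $ is continuous and non-trivial, $\varphi (\int V_{c})$ is a free filter on $\varphi (V)\subseteq \operatorname{Iso}(Y)$ converging to $o_{Y}$; being the image of the contour of a monotone cascade, it is again a sequential contour, and after reordering immediate successors by non-decreasing rank and thinning $V_{c}$ to a full subcascade on which $\varphi $ is injective on leaves, I may realise it as $\int W_{c}$ for a monotone sequential cascade $W_{c}$ with $\max W_{c}\subseteq \varphi (V)$. Then $\varphi $ restricts to a continuous map $\limfunc{ext}V_{c}\to \limfunc{ext}W_{c}$.

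Next I would apply Theorem \ref{thm:ext} to this map, obtaining a full subcascade $U$ of $V_{c}$ and a continuous $g:U\to W_{c}$ with $\left. g\right\vert _{\limfunc{ext}U}=\left. \varphi \right\vert _{\limfunc{ext}U}$. I set $\widetilde{V}:=U$ with its standard (sequential) topology; then $V:=\max U$ is dense in $\widetilde{V}$ and remains full in $X$, since $U$ is a full subcascade and $\int U$ still converges to $o_{X}$. For the target I take $\widetilde{Y}:=W_{c}$ with its standard topology, a sequential space in which $\max W_{c}$ — and hence $Y$, once its prime subspace at $o_{Y}$ is identified with $\limfunc{ext}W_{c}$ and its remaining isolated points are adjoined as isolated points — is dense. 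The map $f:=g$, viewed as a continuous map $U=\widetilde{V}\to W_{c}=\widetilde{Y}$, then satisfies $\left. f\right\vert _{V}=\left. g\right\vert _{\max U}=\left. \varphi \right\vert _{V}$, as required.

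The main obstacle is the faithful passage between the topological and the cascade data, and it enters at two places rather than in the extension step itself. First, producing the \emph{monotone} cascades $V_{c}$ and $W_{c}$: for $V_{c}$ one uses that in a subsequential space every closure point is reached by a monotone sequential contour, and for $W_{c}$ that the image $\varphi (\int V_{c})$ is, after reordering successors and making the leaf map injective, again a monotone sequential contour — the image cascade is a priori only asymptotically monotone, so one invokes the homeomorphism between asymptotically monotone and monotone cascades \cite{DSW}\cite{multisequences}. Second, and this is the genuinely delicate point, realising $Y$ as a dense \emph{subspace} of the sequential space $\widetilde{Y}$ forces the neighbourhood filter of $o_{Y}$ to agree with $\int W_{c}$ on $\max W_{c}$; since $\int V_{c}\geq \mathcal{N}_{X}(o_{X})$ only gives a contour finer than the neighbourhood trace, one must reconcile the a priori coarser topology of $Y$ at $o_{Y}$ with the Arens topology of $W_{c}$. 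This amounts to the statement that the relevant prime subspace at $o_{Y}$ is the Arens topology of $W_{c}$ \cite{cascades}\cite{FrRaj}, and it is precisely here that the full strength of the subsequential characterisation is used. Once these realisations are secured, the transport of Theorem \ref{thm:ext} is routine.
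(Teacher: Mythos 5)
The paper states this proposition \emph{without proof}, as an interpretation of Theorem~\ref{thm:ext}, so your attempt can only be measured against that intended derivation; your skeleton (encode $o_{X}$ by a monotone cascade $V_{c}$ via the subsequential characterization, apply Theorem~\ref{thm:ext}, set $\widetilde{V}:=U$ and $V:=\max U$) is indeed the right engine, and the domain half of your argument is sound, since $V\subset X$ consists of isolated points and carries the discrete topology on both sides. The codomain half, however, has two genuine gaps. First, the step ``thinning $V_{c}$ to a full subcascade on which $\varphi$ is injective on leaves'' is false. Take $V_{c}$ of rank $2$ with columns $C_{n}:=\max V_{c}\left( v_{n}\right) $, let $Y=\left\{ y_{n}:n<\omega \right\} \cup \left\{ o_{Y}\right\} $ be a convergent sequence, and let $\varphi \equiv y_{n}$ on $C_{n}$: this $\varphi $ is continuous and non-trivial, yet every full subcascade of $V_{c}$ retains infinitely many leaves inside a single column, all with the same image, so no leaf-injective thinning exists. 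Here $\varphi \left( \int V_{c}\right) $ is still a monotone contour, but of rank $1<2$; in general, producing a monotone $W_{c}$ with $\int W_{c}\leq \varphi \left( \int V_{c}\right) $ and $\max W_{c}\subset \varphi \left( V\right) $ requires a real lemma on images of monotone contours under arbitrary (not finite-to-one) maps, with the rank allowed to drop (consistently with Proposition~\ref{prop:rank} and the $P_{\alpha }$ application ``$\gamma \leq \alpha $''), and your outline does not supply it.

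Second, the point you flag as delicate is in fact fatal to your choice of $\widetilde{Y}$. If $\widetilde{Y}$ is $W_{c}$ with the standard topology plus adjoined isolated points, then the trace on $Y$ of the neighborhood filter of $o_{Y}$ in $\widetilde{Y}$ is $\int W_{c}$, whereas the subsequential characterization recalled in Section~\ref{intr} only yields a sequential contour \emph{finer} than $\mathcal{N}_{Y}\left( o_{Y}\right) $, never equality; equality would mean that the prime space $Y$ is itself an Arens space of $W_{c}$, which prime subsequential spaces in general are not. So $Y\rightarrow \widetilde{Y}$ is a continuous bijection onto its image but not an embedding, and ``$Y$ is a dense subset of $\widetilde{Y}$'' fails in the subspace sense; no strengthening of the subsequential characterization rescues this. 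The natural repair divorces $\widetilde{Y}$ from the cascade: choose a sequential $Z\supset Y$ and put $\widetilde{Y}:=\operatorname{cl}\nolimits_{Z}Y$, which is sequential (a closed subspace of a sequential space is sequential) and contains $Y$ densely. But then your $g:U\rightarrow W_{c}$ must still be pushed into $\widetilde{Y}$ by a continuous leaf-fixing map, which forces $W_{c}$ to be built from genuine convergences in $\widetilde{Y}$ (each node a $\widetilde{Y}$-limit of its successors, so that the node-inclusion $W_{c}\rightarrow \widetilde{Y}$ is continuous by diagonality of topologies) \emph{while simultaneously} keeping $\int W_{c}\leq \varphi \left( \int V_{c}\right) $ so that Theorem~\ref{thm:ext} applies; sending all non-maximal nodes to $o_{Y}$ does not work, since a subcontour $\int W_{c}\left( w\right) $ at a single node need not converge to $o_{Y}$ even when $\int W_{c}$ does. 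This simultaneous matching is exactly the missing content of the proposition, and neither your proposal nor the paper records an argument for it.
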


\section{Appendix: Contour operation\label{app}}

We considered the contour $\limfunc{Li}\nolimits_{\left( n\right) }\mathcal{F%
}_{n}$ of a sequence $\left( \mathcal{F}_{n}\right) _{n}$ of filters. More
generally, if $\mathcal{B}\left( y\right) \subset 2^{X}$ for each $y\in Y,$
then the \emph{contour} of $\left\{ \mathcal{B}\left( y\right) :y\in
Y\right\} $ along $\mathcal{A}\subset 2^{Y}$ is defined by%
\begin{equation}
\mathcal{B}\left( \mathcal{A}\right) :=\limfunc{Li}\nolimits_{\mathcal{A}}%
\mathcal{B}\left( \cdot \right) =\bigcup\nolimits_{A\in \mathcal{A}%
}\bigcap\nolimits_{y\in A}\mathcal{B}\left( y\right) ,  \label{c}
\end{equation}%
that is, the \emph{set-theoretic lower limit} of subsets of $2^{X}$ along a
family $\mathcal{A}$. In this framework, the contour of a sequence is that
of $\left\{ \mathcal{F}_{n}:n<\omega \right\} $ along the \emph{cofinite
filter} $\left( n\right) ,$ on the natural numbers (\footnote{%
In some previous papers (\cite{cascades},\cite{DSW},\cite{multisequences}),
it was denoted by $\int_{\left( n\right) }\mathcal{F}_{n},$ that is,%
\begin{equation*}
\int_{\left( n\right) }\mathcal{F}_{n}:=\limfunc{Li}\nolimits_{\left(
n\right) }\mathcal{F}_{n}.
\end{equation*}%
}). This notion was introduced by G. H.\ Greco in \cite[p. 158]{ghg.ferrara}
for his theory of limitoids with important applications to the theory of $%
\Gamma $-functionals.

In the case of filters, the operation (\ref{c}) was introduced by H.\ J.\
Kowalsky in \cite{kowalskyA} (\footnote{%
Kowalsky inverted the order in the lattice of filters, so that $\bigcap $
and $\bigcup $ should be interchanged when translated to the usual order
(for which ultrafilters are maximal filters).}), where it was used for a
characterization of diagonality of convergence spaces \cite{kowalsky}. In
\cite{CookFischer}, C. H.\ Cook and H. R.\ Fisher referred to Kowalsky's
operation (\footnote{%
Defined as the set-theoretic upper limit (for the inverse order).}), under
the name of \emph{compression operator,} to characterize regularity of
convergence spaces. As mentioned in Section \ref{intr}, the contour can be
employed, in two complementary ways, for the characterization of both
diagonality and regularity \cite[Chapter V]{CFT}.

Z.\ Frol\'{\i}k introduced (\ref{c}) in \cite{Frolik} in the case of
ultrafilters, calling it a \emph{sum of ultrafilters}, and used it to prove
in ZFC his famous theorem on non-homogeneity of the Stone topology, showing
that there are $2^{\mathfrak{c}}$ types of ultrafilters.

In \cite[p. 53]{CFT} S. Dolecki and F.\ Mynard presented a more abstract
vision of the contour operation, namely, for each family $\mathcal{H}$ of
subsets of the set of families of subsets of a given set $X$ (\footnote{%
Specifically, in order to avoid an accumulation of exponents on five levels,
they denote $\S X:=2^{\left( 2^{X}\right) },$ so that $\mathcal{H}\in \S %
\left( \S X\right) $ and thus $\mathcal{H}^{\ast }\in \S X.$}), the contour $%
\mathcal{H}^{\ast }$ is defined by%
\begin{equation*}
\mathcal{H}^{\ast }:=\bigcup\nolimits_{H\in \mathcal{H}}\bigcap\nolimits_{%
\mathcal{B}\in H}\mathcal{B}.
\end{equation*}%
The definition (\ref{c}) can be easily seen to be a special case of the
notion above.

Considering the set-theoretic meaning of the notion, the term \emph{lower
limit} would be probably most precise to identify the object. However, we
opted for a shorter term \emph{contour}, intended to suggest a limiting
effect of a family $\mathcal{A}$ on the family of families $\left\{ \mathcal{%
B}\left( y\right) :y\in Y\right\} .$ The term \emph{sum} is most frequently
used by general topologists.


\end{document}